\newtheorem{theo}{Theorem}[section]
\newtheorem{defi}[theo]{Definition}
\newtheorem{lemm}[theo]{Lemma}
\newtheorem{remark}[theo]{Remark}
\newtheorem{prop}[theo]{Proposition}
\DeclareMathOperator{\Span}{span}
\numberwithin{equation}{section}
\begin{document}
	\title{ Integrable Modules of Map Full toroidal Lie algebras } 
	\author[P. Bisht]{Pradeep Bisht}
        \address{Pradeep Bisht:  Harish-Chandra Research Institute, A CI of Homi Bhabha National Institute, Chhatnag Road, Jhunsi, Prayagraj (Allahabad) 211 019 India}
	\email{pradeepbisht@hri.res.in, pradeepbishthri@gmail.com}
         \author[P. Batra]{Punita Batra*}   
         \address{Punita Batra: Harish-Chandra Research Institute, A CI of Homi Bhabha National Institute,  Chhatnag Road, Jhunsi, Prayagraj (Allahabad) 211 019 India }
         \email{batra@hri.res.in}
         \thanks{$^\star$  Corresponding author}

\subjclass[2020]{17B68; 17B67}

\keywords{ Kac-Moody Algebras, Full Toroidal Lie algebras}
\date{}
	\begin{abstract}
		In this paper, we study the irreducible objects of the category $ \mathcal{C}_{fin} $ of integrable representations for Map full Toroidal Lie algebras with finite dimensional weight spaces. These representations turn out to be single point evaluation modules and hence are irreducible-integrable modules for the underlying full toroidal algebras.  
	\end{abstract} 
 \maketitle
	\section { Introduction}
 The Lie algebras and their representation theory is an interesting area of research, namely, due to their wide range of applicability in Mathematics and Physics. It is well known that the Virasoro Lie algebra acts on any (except when the level is negative dual coxeter number) highest weight module of affine Lie algebra through the use of Sugawara operators. Further, the construction of highest weight integrable modules of level 1 for affine Lie algebra is known from [15]. Thus the semi-direct product of affine Lie algebra and Virosoro algebra emerges to be an important Lie algebra in
 both Mathematics and Physics. Toroidal Lie algbera is a several variable analogue of the above important Lie algebra.   \par
Let $\mathfrak{\dot{g}}$ be a finite dimensional simple Lie algebra over the field of complex numbers $\mathbb{C}$, $ A = \mathbb{C}[t_{0}^{\pm{1}},...,t_{\nu}^{\pm{1}}]  $ $(\nu\geq 1)$ be the Laurent polynomial ring in $\nu+1$ commuting variables and $\mathcal{D}$ denotes the algebra of derivations of $ A $. Consider the universal central extension of the multiloop algebra $\mathcal{L}(\mathfrak{\dot g})=\mathfrak{\dot{g}}\otimes A$, denoted by $ \mathcal{\tilde L}(\mathfrak{\dot g})$ . On adding the $ \nu +1 $ degree derivations $d_{i}$'s to $ \mathcal{\tilde L}(\mathfrak{\dot g})$ we get the well known Toroidal Lie algebras, denoted by $  \mathcal{\hat L}(\mathfrak{\dot g})$. These algebras are multivariable analogue of the untwisted affine Lie algebras.
Note that for $\nu=0 $, these are just the untwisted affine Kac-Moody algebras. If we  add the algebra of derivations $ \mathcal{D}$ to $   \mathcal{\tilde L}(\mathfrak{\dot g}) $, we obtain the full Toroidal Lie algebra, denoted by $\tau$. Toroidal Lie algebra has been extensively studied over the last three decades following their introduction in [1] and [2] by S. Eswara Rao, R.V. Moody and T. Yokonuma. Irreducible, integrable representations for the Toroidal algebras with finite dimensional weight spaces were classified by S. Eswara Rao in [3], and that for the full toroidal Lie algebras were classified by S. Eswara Rao and Cuipo Jiang in [4]. 
\par Given a Lie algebra $\mathfrak{g}$ and $Z$ an affine scheme of finite type, the Lie algebra of all the regular maps from $Z$ to $\mathfrak{g}$ is called the map algebra associated with $Z$ and $\mathfrak{g}$ and is isomorphic to the Lie algebra $\mathfrak{g}\otimes B$, where $B=\mathcal{O}_{Z}(Z)$. In the last decade, the representation theory of Map Lie algebras  has emerged as an interesting object of research. Irreducible, finite dimensional representations of equivariant map algebras were studied by E. Neher, A. Savage, P. Senesi in [7]. A. Savage classified the irreducible quasifinite modules over map Virasoro algebras in [16]. Classification of highest weight integrable modules for $\mathfrak{g}_{aff} \otimes B $ was done by S. Eswara Rao and P. Batra [8], where $B$ is any finitely generated commutative associative unital algebra. S. Eswara Rao studied the irreducible representations of loop affine-Virosoro algebras in [17].  A paritial classification result for loop-Witt algebras was obtained by P. Chakraborty and S. Eswara Rao in [11].    Irreducible representation for map extended Witt algebra with finite dimensional weight spaces were classified by S. Sharma, R.K. Pandey, P. Chakraborty, and S. Eswara Rao, see [9]. In [14], the classification for irreducible integrable representation of loop Toroidal Lie algebras, specifically $\hat{\mathcal L}(\mathfrak{\dot g})(B)=\hat{\mathcal{L}}(\mathfrak{\dot g})\otimes B$, with finite dimensional weight spaces and where $B$ is any finitely generated commutative associative unital algebra was done by P. Chakraborty and P. Batra.  \par 
In this paper we deal with irreducible, integrable modules for the Map full Toroidal Lie algebra $ \tau(B)=\tau \otimes B$, with finite dimensional weight spaces, where $ B$ is any finitely generated commutative associative unital algebra over $\mathbb{C}$.
Now we give a brief description of this paper. The proof of our results is inspired by the papers [4], [5] and [9]. It follows that if $V$ is any irreducible integrable module with finite dimensional weight spaces for the Map full toroidal Lie algebra $ \tau(B) = \tau \otimes B$, then in case of non-zero central charges or zero central charges with $(\mathfrak{\dot g}\otimes A \otimes B)\cdot V \neq 0$, $V$ has the highest or lowest weight space $T$. In case of non-zero central charges, we prove that $T$ is an irreducible module for $( A_{\nu}\rtimes DerA_{\nu})\otimes B$ with finite dimensional weight spaces,  where $A_{\nu}=\mathbb{C}[t_{1}^{\pm 1},...,t_{\nu}^{\pm 1}]$. Such modules were shown to be the single point evaluation modules in [11], and hence are irreducible module for $A_{\nu}\rtimes DerA_{\nu}$ with finite dimensional weight spaces. It follows from [10] that $T$ is isomorphic to a Larsson-Shen module $F^{\alpha}(\Psi,a)$ as $A_{\nu}\rtimes Der A_{\nu} $-modules. Similarly, in case of zero central charges with $(\mathfrak{\dot g}\otimes A\otimes B)\cdot V\neq 0$, we have shown that $T$ is isomorphic to a Larsson-Shen module $ F^{\alpha}(\Psi,a) $(say) as $A\rtimes DerA $-modules. Finally, we prove that the entire module $V$ is a single point evaluation module, and therefore is an irreducible-integrable module for the underlying full Toroidal Lie algebra. Such modules are already classified in [4], see also [5]. For the case of zero central charges with $(\mathfrak{\dot g}\otimes A\otimes B)\cdot V=0 $, the problem of classification reduces  to the classification of irreducible $\mathcal{D}\otimes B$-modules with finite dimensional weight spaces. Such modules were shown to be single point evaluation modules in [9], and hence they are irreducible $\mathcal{D}$-modules with finite dimensional weight spaces. Moreover, all the irreducible $\mathcal{D}$-modules with finite dimensional weight spaces are classified in [12], therefore in that case the modules are those classified in [12].

	\section	{ Basic results and terminology.}
	Let $\mathfrak{\dot g}$ be a finite-dimensional Lie algebra over $\mathbb{C}$, let $\mathfrak{\dot{h}}$ be a cartan subalgebra of $\mathfrak{\dot g}$, $\dot\Delta$  be the set root system of $\mathfrak{\dot g}$ with respect to $\mathfrak{\dot h}$, $\dot\Delta_{+} $(resp. $\dot\Delta_{+}$) be the set of positive roots(resp. negative roots). Then $\mathfrak{\dot g} = \mathfrak{\dot h}\oplus_{\alpha \in \dot\Delta}\mathfrak{g_{\alpha}}$ is the Cartan decomposition for $\mathfrak{\dot{g}}$ with respect to the cartan $\mathfrak{\dot{h}}$. For $\alpha \in \dot{\Delta}$, let $\check\alpha \in \mathfrak{\dot h}$ be such that $\alpha(\check\alpha)=2$. We choose an $sl_2$-copy $\Span_{\mathbb{C}}\{{e_{\alpha},e_{-\alpha}},\check\alpha\}$ in $\mathfrak{\dot g}$ such that $e_{\alpha} \in \mathfrak{\dot g_{\alpha}},\; e_{-\alpha} \in \mathfrak{\dot g_{-\alpha }}$ and $[e_{\alpha}, e_{-\alpha}]=\check\alpha,\; [\check{\alpha},e_{\alpha}]=2e_{\alpha},\; [\check\alpha,e_{-\alpha}]=-2e_{-\alpha}$.\\
 Let $\mathfrak{\dot g}_{\pm} = \bigoplus_{\alpha \in {\dot\Delta_{\pm}}} \mathfrak{\dot g_{\alpha}}, \: \dot Q_{+} = \sum_{i=1}^{l} \mathbb{Z_{+}}\alpha_{i}$, where $\mathbb{Z_{+}}= \mathbb{N}\cup\{0\}$, also let $\mathbb{Z}_{-}$ denotes the set of negative integers.
	\par
	Let $A = C[t_{0}^{\pm1},...,t_{\nu}^{\pm1}](\nu\geq1)$ be the ring of Laurent polynomials in $\nu+1$\: commuting variables $t_{0},...,t_{\nu}$. For any $\underline{n}=(n_1,...,n_\nu) \in \mathbb{Z}^{\nu}$ and $ n_{0} \in \mathbb{Z},$ denote by $ t_{0}^{n_{0}}t^{\underline{n}} $ the monomial $t_{0}^{n_{0}}t_{1}^{n_{1}}\ldots t_{\nu}^{n_{\nu}}$. Then the tensor product of $\mathfrak{\dot{g}}$ and $A$, $\mathfrak{\dot g}\otimes A$ has a natural Lie algebra structure given by
	\begin{equation}
		 [x_1\otimes f_1,x_2\otimes f_2]\notag\\= [x_1,x_2]\otimes f_{1}f_{2},
	\end{equation}
where $x_1,x_2\in \mathfrak{\dot{g}},\:  f_{1},f_{2}\in A$. It is well known that the universal central extension of $\mathfrak{\dot g}\otimes A$ is the toroidal Lie algebra $ \mathcal{\tilde L}(\mathfrak{\dot g}) = \mathfrak{\dot g}\otimes A \oplus \mathcal{K}$, where $\mathcal{K}$ is spanned by the elements $\{t_{0}^{r_{0}}t^{\underline r}k_{p}\mid p=0,1,...,\nu,\: r_{0}\in \mathbb{Z},\: \underline{r}\in \mathbb{Z}^{\nu} \}$ with the relations \par
$ \sum_{p=0}^{\nu}r_{p}t_{0}^{r_{0}}t^{\underline r}k_{p}=0 $, and the bracket on $  \mathcal{\tilde L}(\mathfrak{\dot g})  $ is given by: \begin{equation}
	[g_{1}\otimes f_{1}, g_{2}\otimes f_{2}]\notag\\=[g_1,g_2]\otimes f_{1}f_{2}+ (g_{1}| g_{2})\sum_{p=0}^{\nu}(d_p(f_{1})f_{2})k_{p}, 
\end{equation}
with $\mathcal{K}$ central in $ \mathcal{\tilde L}(\mathfrak{\dot g}) $, i.e. $[\hat\tau,\mathcal{K}]=0$ and where $(\cdot|\cdot)$ is the invariant symmetric bilinear form on $\mathfrak{\dot g}$, $d_{p}$ is the pth degree derivation of $A$, i.e. 
\begin{equation*} d_{p}= t_{p}\pdv{}{t_{p}}, \; \; p=0,1,...,\nu. \end{equation*}
Let $\mathcal{D}$ be the Lie algebra of derivations on $A$, then $\mathcal{D}$ acts naturally on $\mathfrak{\dot g}\otimes A$ as \begin{equation}
D(x\otimes f)\notag=x\otimes Df,\;\; D \in \mathcal{D}, \: x \in \mathfrak{\dot g},f\in A, \end{equation} 
and $D$ can be uniquely extended to the universal central extension $  \mathcal{\tilde L}(\mathfrak{\dot g})  $ of $\mathfrak{\dot g}\otimes A $ by \begin{equation}
	t_{0}^{m_0}t^{\underline m}d_{a}(t_{0}^{n_0}t^{\underline n}k_b)\notag= n_{a}t_{0}^{m_{0}+n_{0}}t^{\underline m + \underline n}k_b + \delta_{ab}\sum_{p=0}^{\nu}m_{p}t_{0}^{m_{0}+n_{0}}t^{\underline{m}+\underline{n}}k_{p}.
	\end{equation}
The algebra $\mathcal{D}$ admits two non-trivial 2-cocycles with values in $\mathcal{K}$ (see [6]):
\begin{equation}
	\phi_1(t_{0}^{m_0}t^{\underline m}d_{a},t_{0}^{n_0}t^{\underline n}d_{b})\notag=-n_{a}m_{b}\sum_{p=0}^{\nu}m_{p}t^{m_{0}+n_{0}}t^{\underline{m}+\underline{n}}k_{p},
\end{equation} 
\begin{equation}
 \phi_{2}(t_{0}^{m_{0}}t^{\underline{m}}d_{a},t_{0}^{n_0}t^{\underline n}d_{b})\notag =m_{a}n_{b}\sum_{p=0}^{\nu}m_{p}t_{0}^{m_{0}+n_{0}}t^{\underline{m}+\underline{n}}k_{p}.
\end{equation}
Let $\phi$ be an arbitrary linear combination of $\phi_{1}$ and $\phi_{2}$. Then the Lie algebra $\tau = \mathfrak{\dot{g}}\otimes A \oplus \mathcal{K}\oplus \mathcal{D}$ is a Lie algebra with the Lie bracket: 
\begin{align*}
    \begin{split}
[t_{0}^{m_{0}}t^{\underline{m}}d_{a},t_{0}^{n_{0}}t^{\underline n}k_{b}] &=n_{a}t_{0}^{m_{0}+n_{0}}t^{\underline{m}+\underline{n}}k_{b}+ \delta_{ab}\sum_{p=0}^{\nu}m_{p}t_{0}^{m_{0}+n_{0}}t^{\underline{m}+\underline{n}}k_{p}, \\
	[t_{0}^{m_0}t^{\underline m}d_{a},t_{0}^{n_0}t^{\underline n}d_{b}] &=n_{a}t_{0}^{m_{0}+n_{0}}t^{\underline{m}+\underline{n}}d_{b}-m_{b}t_{0}^{m_0 + n_0}t^{\underline{m}+\underline{n}}d_{a} + \phi(t_{0}^{m_0}t^{\underline{m}}d_{a},t_{0}^{n_{0}}t^{\underline{n}}d_{b}), \\
[t_{0}^{m_0}t^{\underline m}d_{a},x\otimes t_{0}^{ n_{0}}t^{\underline n}] &=n_{a}x\otimes t_{0}^{m_0}t^{\underline m}.
\end{split}
\end{align*} 
The algebra $\tau$ is called the full toroidal Lie algebra associated to $\mathfrak{\dot g}$ and $\phi$. Let 
\begin{equation*}
	\mathfrak{h}= \mathfrak{\dot h}\oplus(\bigoplus_{i=0}^{\nu}\mathbb{C}k_{i})\oplus(\bigoplus_{i=0}^{\nu}\mathbb{C}d_{i}).
\end{equation*}
Then $\mathfrak{h}$ is a Cartan subalgebra of $\tau$. Let $\delta_{i},\Lambda_{i}\in \mathfrak{h}^{*}(i=0,1,...,\nu)$ be such that 
\begin{equation*}
	\Lambda_{i}(\mathfrak{\dot h})=0, \;    \Lambda_{i}(k_{j})=\delta_{ij},\;\Lambda_{i}(d_j)=0, \; i,j=0,1,...,\nu,
\end{equation*}
\begin{equation*}
	\delta_{i}(\mathfrak{\dot h})=0,\; \delta_{i}(k_j)=0, \; \delta_{i}(d_j)=\delta_{ij},\; i,j=0,1,...,\nu.
	\end{equation*}
and for $\underline{m}=(m_{1},...,m_{\nu})\in \mathbb{Z}^{\nu}$, let $\delta_{\underline m}$  denote the isotropic root $\sum_{i=1}^{\nu}m_{i}\delta_{i}$, then $\tau$ has the root space decomposition with respect to $ \mathfrak{h} $ as follows:
\begin{equation}
	\tau\notag =\mathfrak{h}\oplus(\bigoplus_{\beta \in \Delta}\tau_{\beta}),
\end{equation}
where $\Delta=\dot\Delta\cup\{\alpha+m_{0}\delta_{0}+\delta_{\underline m}\mid \alpha \in \dot\Delta\cup\{0\},\: \underline{m}\in \mathbb{Z}^{\nu},\: m_{0}\in \mathbb{Z},\: (m_{0},\underline{m})\neq (0,\underline0)\}$ and  
\begin{equation}
	\tau_{\alpha+m_{0}\delta_0 + \delta_{\underline m}}=\notag \mathfrak{\dot g_{\alpha}}\otimes t_{0}^{m_{0}}t^{\underline m},
\end{equation}
\begin{equation}
		\tau_{m_{0}\delta_{0}+\delta_{\underline m}}\notag=\mathfrak{\dot h}\otimes t_{0}^{m_0}t^{\underline m}\oplus(\bigoplus_{i=0}^{\nu}\mathbb{C}t_{0}^{m_{0}}t^{\underline m}k_{i})\oplus (\bigoplus_{i=0}^{\nu}\mathbb{C}t_{0}^{m_{0}}t^{\underline m}d_{i}).
\end{equation}
Let $ \mathfrak	{b}= \mathcal{K}\oplus \mathcal{D}$,
    then
\begin{align*}
\begin{split}    
	\mathfrak{b_{+}}&=\notag\sum_{i=0}^{\nu}t_{0}\mathbb{C}[t_{0},t_{1}^{\pm 1},...,t_{\nu}^{\pm 1}]k_{i}\oplus \sum_{i=0}^{\nu}t_{0}\mathbb{C}[t_{0},t_{1}^{\pm 1},...,t_{\nu}^{\pm 1}]d_{i}, \\
    \mathfrak{b_{-}}&=\notag\sum_{i=0}^{\nu}t_{0}^{-1}\mathbb{C}[t_{0}^{-1},t_{1}^{\pm 1},...,t_{\nu}^{\pm 1}]k_{i}\oplus \sum_{i=0}^{\nu}t_{0}^{-1}\mathbb{C}[t_{0}^{-1},t_{1}^{\pm 1},...,t_{\nu}^{\pm 1}]d_{i}, \\
    \mathfrak{b}_{0}&=\notag\sum_{i=0}^{\nu}\mathbb{C}[t_{1}^{\pm 1},...,t_{\nu}^{\pm 1}]k_{i}\oplus \sum_{i=0}^{\nu}\mathbb{C}[t_{1}^{\pm 1},...,t_{\nu}^{\pm 1}]d_{i}.
\end{split}
\end{align*}
and 
\begin{align*}
\begin{split} 
\tau_{+} &= \mathfrak{\dot g}_{+}\otimes \mathbb{C}[t_{1}^{\pm 1},...,t_{\nu}^{\pm 1}]\oplus \mathfrak{\dot g}\otimes t_{0}\mathbb{C}[t_{0},t_{1}^{\pm 1},...,t_{\nu}^{\pm 1}]\oplus \mathfrak{b}_{+}, \\
\tau_{-} &= \mathfrak{\dot g}_{-}\otimes \mathbb{C}[t_{1}^{\pm 1},...,t_{\nu}^{\pm 1}]\oplus \mathfrak{\dot g}\otimes t_{0}^{-1}\mathbb{C}[t_{0}^{-1},t_{1}^{\pm 1},...,t_{\nu}^{\pm 1}]\oplus \mathfrak{b}_{-}, \\
\tau_{0} &=\mathfrak{\dot h}\otimes \mathbb{C}[t_{1}^{\pm 1},..., t_{\nu}^{\pm 1}]\oplus \mathfrak{b}_{0}.
\end{split}
\end{align*}
Therefore
\begin{equation*}
	\mathfrak{b} = \mathfrak{b}_{+}\oplus\mathfrak{b}_{0}\oplus \mathfrak{b}_{-},
\end{equation*}
\begin{equation*}
	\tau = \tau_{+}\oplus\tau_{0}\oplus \tau_{-}.
\end{equation*}
Extend $\alpha \in\dot\Delta$ to $\mathfrak{h}^{*}$ by setting $\alpha(k_i)=\alpha(d_{i})=0$, the normal non-degenrate symmetric bilinear form $(\cdot|\cdot)$ on $\mathfrak{\dot h}^{*}$ extends to a non-degenrate symmetric bilinear form on $\mathfrak{h}^{*}$ given by
\begin{equation*}
	(\alpha_{i}|\delta_{k})\notag=(\alpha_{i}|\Lambda_{k})=0, \: 1\leq i \leq l,\:  0\leq k \leq \nu,
\end{equation*}
\begin{equation*}
	(\delta_{k}|\delta_{p})\notag=(\Lambda_{k}|\Lambda_{p})=0, (\delta_{k}|\Lambda_{p})=\delta_{kp},\: 0\leq k,p \leq \nu.
\end{equation*}
For $\gamma=\alpha+m_{0}\delta_{0}+\delta_{\underline m} \in \Delta$, where $\alpha \in \dot\Delta$, $\gamma$ is called real root, if $(\gamma|\gamma)\neq0$. Denote by $\Delta^{re}$ the set of all real roots. Define the co-root of $\gamma$ as
\begin{equation*}
	\check\gamma=\check\alpha +  \frac{2}{(\alpha|\alpha)}\sum_{i=1}^{\nu}m_{i}k_{i}. 
\end{equation*}
Then \begin{equation}
	\gamma(\check\gamma)\notag=\alpha(\check{\alpha})=2.
\end{equation}

Let $\gamma$ be a real root. Define reflection on $\mathfrak{h}^{*}$ by
\begin{equation}
	r_{\gamma}(\lambda)\notag= \lambda - \lambda(\check\gamma)\gamma,\: \lambda \in \mathfrak{h}^{*}.
\end{equation}
Let $\mathcal{W}$ be the Weyl group generated by $\{r_{\gamma}\mid \gamma \in \Delta^{re}\}$. Then the bilinear form $(\cdot|\cdot)$ is $\mathcal{W}$-invariant.

\section { The Map full toroidal Lie algebra}\label{NW}

Let $B$ be any finitely generated associative commutative unital algebra over $\mathbb{C}$, then the tensor product of $\tau $ and $B$, denoted $\tau\otimes B$, is a Lie algebra with the bracket given by:\begin{equation*}
[x_{1}\otimes b_{1},x_{2}\otimes b_{2}] = [x_{1},x_{2}]\otimes b_{1}b_{2},\; x_{1},x_{2}\in \tau,\: b_{1},b_{2}\in B. 
\end{equation*}
We denote the Map full Toroidal Lie algebra by $\tau(B)$. Explicitly, the Map full toroidal Lie algebra is
\begin{equation*}
	\tau(B)\notag=\tau\otimes B = (\mathfrak{\dot g}\otimes A\oplus\mathcal{K}\oplus\mathcal{D})\otimes B = (\mathfrak{\dot g}\otimes A \otimes B) \oplus (\mathcal{K}\otimes B) \oplus (\mathcal{D}\otimes B).
\end{equation*}
The subalgebra $\mathfrak{h}\otimes 1$ is an abelian subalgebra for $\tau(B)$ and plays the role of cartan subalgebra. Similar to the root space decomposition for $\tau$, we have the root space(w.r.t $\mathfrak{h}$) for $\tau(B)$, given by
\begin{equation}
	\tau(B)\notag =(\mathfrak{h}\oplus\bigoplus_{\beta \in \Delta}\tau_{\beta})\otimes B,
\end{equation}
where $\Delta=\dot\Delta\cup\{\alpha+m_{0}\delta_{0}+\delta_{\underline m}\mid \alpha \in \dot\Delta\cup\{0\},\:\underline{m}\in \mathbb{Z}^{\nu},\:m_{0}\in \mathbb{Z},\:(m_{0},\underline{m})\neq (0,\underline0)\}$ and  
\begin{equation}
	\tau(B)_{\alpha+m_{0}\delta_0 + \delta_{\underline m}}=\notag \mathfrak{\dot g_{\alpha}}\otimes t_{0}^{m_{0}}t^{\underline m}\otimes B,
\end{equation}
\begin{equation}
	\tau(B)_{m_{0}\delta_{0}+\delta_{\underline m}}\notag=\mathfrak{\dot h}\otimes t_{0}^{m_0}t^{\underline m}\otimes B \oplus(\bigoplus_{i=0}^{\nu}\mathbb{C}t_{0}^{m_{0}}t^{\underline m}k_{i}\otimes B)\oplus (\bigoplus_{i=0}^{\nu}\mathbb{C}t_{0}^{m_{0}}t^{\underline m}d_{i} \otimes B).
\end{equation}
With the notations consistent with underlying full Toroidal Lie algebra $\tau$, we have
\begin{align*} \begin{split}
	\mathfrak{b}\otimes B &= (\mathfrak{b}_{+}\otimes B)\oplus(\mathfrak{b}_{0}\otimes B)\oplus (\mathfrak{b}_{-}\otimes B), \\
 \tau\otimes B &= (\tau_{+}\otimes B)\oplus(\tau_{0}\otimes B)\oplus (\tau_{-}\otimes B), \\ 
 &= \tau(B)_{+}\oplus\tau(B)_{0}\oplus \tau(B)_{-}. \end{split}
\end{align*} 
where $\tau(B)_{\pm}=\tau_{\pm}\otimes B,\: \tau(B)_{0}=\tau_{0}\otimes B$ and  $b_{+},\tau_{+}$ and the other symbols have the same meaning as in the toroidal  case.	It also follows that Weyl group $\mathcal{W} $ for $\tau(B)$ remains the same as that for $\tau$.
\\ Now we define the integrable modules for $\tau(B)$
\begin{defi} A module V of $\tau(B)$ is called integrable if 
	\begin{itemize}
		\item[(1)] V is a $\mathfrak{h}$-weight module, i.e.,
		\begin{equation}
			V\notag=\bigoplus_{\lambda \in \mathfrak h^{*} }V_{\lambda},
		\end{equation}
	where $V_{\lambda}=\{v\in V\mid h\cdot v=\lambda(h)v,\: \forall\: h\in \mathfrak{h}\}$.
	\item[(2)] For $\alpha \in \dot\Delta,\: m_{0}\in \mathbb{Z},\:\underline{m}\in \mathbb{Z}^{\nu},\: b\in B$, $x_{\alpha}\otimes t_{0}^{m_{0}}t^{\underline m}\otimes b$ acts locally nilpotently on V, where $x_{\alpha}\in \mathfrak{\dot g}_{\alpha}$.
	\end{itemize} \end{defi}
Let $P(V)=\{\lambda \in \mathfrak{h}^{*}\mid V_{\lambda}\neq 0\}$ be the set of weights of $V$. Denote by $\mathcal{C}_{fin}$  the catgory of integrable $\tau(B)$-modules with finite dimensional weight spaces. Since the definition of integrable module for $\tau(B)$ implies that it is also an integrable module for $\tau$, we have

\begin{lemm}  Let $V\in \mathcal{C}_{fin}$. Then
	\begin{itemize}
		\item[(1)] $P(V)$ is $\mathcal{W}$-invariant.
		\item[(2)] $dim V_{\lambda} = dim V_{\omega\lambda}, \:\omega\in\mathcal{W},\: \lambda \in P(V) $.
		\item[(3)] If $\alpha\in \Delta^{re},\: \lambda\in P(V)$, then $\lambda(\check\alpha)\in \mathbb{Z}$.
		\item[(4)] If $\alpha\in \Delta^{re},\: \lambda \in P(V) $ and $ \lambda(\check\alpha)>0$, then $\lambda-\alpha\in P(V)$.
		\item[(5)] For $\lambda\in P(V)$, $\lambda(k_{i})$ is a constant integer, $i=0,1,...,\nu.$
	\end{itemize}
	\end{lemm}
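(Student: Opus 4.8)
The plan is to reduce all five assertions to the representation theory of $\mathfrak{sl}_2$ by attaching to each real root $\gamma$ a distinguished $\mathfrak{sl}_2$-triple inside $\tau(B)$ whose nilpotent generators act locally nilpotently on $V$, and then reading off (1)--(5) from the decomposition of $V$ into finite dimensional $\mathfrak{sl}_2$-modules.

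First, given $\gamma\in\Delta^{re}$, write $\gamma=\dot\alpha+m_0\delta_0+\delta_{\underline m}$ with $\dot\alpha\in\dot\Delta$ (forced, since $(\gamma|\gamma)=(\dot\alpha|\dot\alpha)$), and put
\[
 e_\gamma=e_{\dot\alpha}\otimes t_0^{m_0}t^{\underline m}\otimes 1,\qquad f_\gamma=e_{-\dot\alpha}\otimes t_0^{-m_0}t^{-\underline m}\otimes 1,
\]
which lie in $\tau(B)_\gamma$ and $\tau(B)_{-\gamma}$. Using the bracket of $\tau$ and the (forced) normalisation $(e_{\dot\alpha}|e_{-\dot\alpha})=2/(\dot\alpha|\dot\alpha)$, one checks that $h_\gamma:=[e_\gamma,f_\gamma]$ lies in $\mathfrak h$ and acts on $V_\lambda$ by the scalar $\lambda(\check\gamma)$ --- this is the step where the central terms $t_0^0t^{\underline 0}k_p$ produced by the $2$-cocycle of $\mathcal K$ must be collected --- while $[h_\gamma,e_\gamma]=2e_\gamma$ and $[h_\gamma,f_\gamma]=-2f_\gamma$ since $\gamma(\check\gamma)=2$. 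Thus $\mathfrak{s}_\gamma:=\Span_{\mathbb C}\{e_\gamma,f_\gamma,h_\gamma\}$ is a copy of $\mathfrak{sl}_2$ in $\tau(B)$. By the integrability axiom both $e_\gamma$ and $f_\gamma$, being of the form $x_{\pm\dot\alpha}\otimes t_0^{\pm m_0}t^{\pm\underline m}\otimes b$ with $x_{\pm\dot\alpha}\in\dot{\mathfrak g}_{\pm\dot\alpha}$, act locally nilpotently on $V$; combined with the finite dimensionality of the weight spaces and with the fact that $h_\gamma\in\mathfrak h$ already acts semisimply, the classical theory of $\mathfrak{sl}_2$-modules gives that $V$ is a direct sum of finite dimensional irreducible $\mathfrak{s}_\gamma$-modules.

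From this decomposition the first four statements follow in the usual way. Since $h_\gamma$ acts semisimply with integer eigenvalues on each finite dimensional summand, $\lambda(\check\gamma)\in\mathbb Z$ for all $\lambda\in P(V)$, which is (3). The operator $\sigma_\gamma:=\exp(e_\gamma)\exp(-f_\gamma)\exp(e_\gamma)$ is well defined on $V$ by local nilpotence, is invertible on each finite dimensional summand, and sends $V_\lambda$ onto $V_{r_\gamma\lambda}$; hence $\dim V_\lambda=\dim V_{r_\gamma\lambda}$ for every generating reflection $r_\gamma$ of $\mathcal W$, and therefore $\dim V_\lambda=\dim V_{\omega\lambda}$ for all $\omega\in\mathcal W$ by composition, giving (2), and (1) is immediate. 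For (4), take $\gamma=\alpha$; if $\lambda(\check\alpha)>0$ then in each finite dimensional summand the relevant $h_\gamma$-eigenvector is not a lowest weight vector, so $f_\gamma$ is injective on it, and as the summands are independent $f_\gamma V_\lambda\neq0$; since $f_\gamma$ maps $V_\lambda$ into $V_{\lambda-\alpha}$ this yields $\lambda-\alpha\in P(V)$.

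Finally, for (5), each $k_i\otimes1$ is central in $\tau(B)$ (immediate from $[k_i,x]\otimes b=0$), hence acts on $V_\lambda$ by the scalar $\lambda(k_i)$; since $\beta(k_i)=0$ for every $\beta\in\Delta$, this scalar is unchanged along root strings, so it is the same on every connected component of $P(V)$, a single value when $V$ is irreducible. Integrality comes from applying the $\mathfrak{sl}_2$-analysis to suitable real roots: for $i\geq1$ to $\gamma=\dot\alpha+\delta_i$ and for $i=0$ to $\gamma=\dot\alpha+\delta_0$, choosing $\dot\alpha$ so that $k_i$ occurs in $h_\gamma=[e_\gamma,f_\gamma]$ with coefficient $1$; since $\lambda(\check{\dot\alpha})\in\mathbb Z$ by (3), we get $\lambda(k_i)\in\mathbb Z$. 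I expect no genuine obstacle here: the argument is routine once the triples are in place, and the only point demanding care is precisely that construction in the second step --- checking, after collecting the central contributions of the cocycle, that $[e_\gamma,f_\gamma]$ is exactly the element of $\mathfrak h$ acting by $\lambda(\check\gamma)$, and that the integrability hypothesis applies verbatim to the chosen root vectors.
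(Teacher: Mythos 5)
Your proposal is correct and is essentially the argument the paper relies on: the paper's own proof is a one-line reduction to the fact that $V$ restricts to an integrable $\tau$-module together with a citation of [3], and the proof there is exactly your $\mathfrak{sl}_2$-triple construction (collecting the central terms in $[e_\gamma,f_\gamma]$ to recover $\check\gamma$, invoking local nilpotence and finite dimensionality of weight spaces to decompose $V$ into finite dimensional $\mathfrak{s}_\gamma$-modules, and reading off (1)--(5)). Your computation even corrects an apparent typo in the paper's displayed formula for $\check\gamma$, which omits the $m_0k_0$ term that the cocycle genuinely produces.
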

 \begin{proof}
     The proof follows from that fact that $V$ is an integrable module for $\tau$ and prove is the same as that in [3].
 \end{proof}
It follows from the above lemma that \begin{equation*}
	\lambda(k_{i})=c_{i},\: i=0,1,...,\nu,\: \forall \: \lambda \in P(V), \: c_{i}\in \mathbb{Z}. 
	\end{equation*}
Consider the following affine subalgebra $\mathfrak{g}_{a}$ of $\tau(B)$:
\begin{equation}
	\mathfrak{g}_{a}\notag=(\mathfrak{\dot{g}}\otimes\mathbb{C}[t_{0}^{\pm 1}]\oplus\mathbb{C}k_{0}\oplus\mathbb{C}d_{0})\otimes 1,
\end{equation}
and
\begin{align*}
    \begin{split}
        \Delta_{a,+}&= \dot\Delta_{+}\cup\{\alpha+m_{0}\delta_{0} \mid \alpha \in \dot\Delta\cup\{0\}, \: m_{0} \in \mathbb{N} \}, \\
        \Delta_{a,-}&= \dot\Delta_{-}\cup\{\alpha+m_{0}\delta_{0} \mid \alpha \in \dot\Delta\cup\{0\}, \: -m_{0} \in \mathbb{N} \},
    \end{split}
\end{align*}
\begin{equation*}\mathfrak{g}_{a,+}=\bigoplus_{\alpha \in \Delta_{a,+}}\mathfrak{g}_{a,\alpha}, \: \: \: \: 
    \mathfrak{g}_{a,-}=\bigoplus_{\alpha \in \Delta_{a,-}}\mathfrak{g}_{a,\alpha}, \: \: \: \:   \mathfrak{h}_{a} = (\mathfrak{\dot h}\oplus \mathbb{C}k_{0}\oplus \mathbb{C}d_{0})\otimes 1,
\end{equation*}
where $\mathfrak{g}_{a,\alpha}= \{x\in \mathfrak{g}_{a}\mid [h,x]=\alpha(h)x,\: h\in \mathfrak{h}_{a}\}$. Then, it is clear that\begin{equation*}
    \mathfrak{g}_{a}=\mathfrak{g}_{a,+}\oplus \mathfrak{h}_{a}\oplus \mathfrak{g}_{a,-}.
\end{equation*}
The following lemma considers twisting the Lie algebra $\tau(B)$ by a natural automorphism.
\begin{lemm} Let $\textbf{A}=(a_{ij})(0\leq j \leq \nu)$ be a matrix of order $\nu+1$ such that $det\textbf{A}=1$ and $a_{ij}\in \mathbb{Z}$. Then there exists an automorphism $\sigma$ of $\tau(B)$ such that

	\begin{equation}
		\sigma(x\otimes t^{\textbf{m}}\otimes b)\notag=x\otimes t^{\textbf{m}\textbf{A}^{t}}\otimes b,
	\end{equation}
	\begin{equation}
		\sigma(t^{\textbf{m}}k_{j}\otimes b)\notag=\sum_{p=0}^{\nu}a_{pj}t^{\textbf{m}\textbf{A}^{t}}k_{p}\otimes b,\: 0\leq\ j\leq\nu,
	\end{equation}
	\begin{equation}
		\sigma(t^{\textbf{m}}d_{j}\otimes b)\notag=\sum_{p=0}^{\nu}b_{jp}t^{\textbf{m}\textbf{A}^{t}}d_{p}\otimes b, \: 0\leq\ j\leq\nu,
	\end{equation}
	where $(b_{ij})=\textbf{A}^{-1}$ and $b\in B  $.

 \end{lemm}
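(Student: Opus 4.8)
The plan is to reduce the statement to the case $B=\mathbb{C}$ and then verify directly that the displayed formulas define an automorphism of the full toroidal algebra $\tau$. Since $\tau(B)=\tau\otimes B$ with $B$ commutative, associative and unital, any Lie algebra automorphism $\sigma_0$ of $\tau$ yields a Lie algebra automorphism $\sigma:=\sigma_0\otimes\mathrm{id}_B$ of $\tau(B)$, because $[\sigma_0(x_1)\otimes b_1,\sigma_0(x_2)\otimes b_2]=[\sigma_0(x_1),\sigma_0(x_2)]\otimes b_1b_2=\sigma_0([x_1,x_2])\otimes b_1b_2$ for all $x_i\in\tau$, $b_i\in B$; and this $\sigma$ is precisely the map described in the statement. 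So it suffices to produce $\sigma_0$ on $\tau$.

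Write $\textbf{m}=(m_0,\underline m)\in\mathbb{Z}^{\nu+1}$ for an exponent vector and set $\textbf{m}'=\textbf{m}\textbf{A}^{t}$, so that $m'_q=\sum_{p=0}^{\nu}a_{qp}m_p$; let $(b_{ij})=\textbf{A}^{-1}$, which has integer entries because $\det\textbf{A}=1$. Define $\sigma_0$ on generators of $\tau$ by the three formulas in the statement with $b$ dropped: $x\otimes t^{\textbf{m}}\mapsto x\otimes t^{\textbf{m}'}$, $t^{\textbf{m}}k_j\mapsto\sum_p a_{pj}t^{\textbf{m}'}k_p$, and $t^{\textbf{m}}d_j\mapsto\sum_p b_{jp}t^{\textbf{m}'}d_p$. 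The first thing I would check is well-definedness on $\mathcal{K}$: applying $\sigma_0$ term by term to the defining relation $\sum_{p=0}^{\nu}r_p t^{\textbf{r}}k_p=0$ gives $\sum_q(\sum_p a_{qp}r_p)t^{\textbf{r}'}k_q=\sum_q r'_q t^{\textbf{r}'}k_q$, which vanishes by the same relation applied with exponent $\textbf{r}'$.

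Next I would verify that $\sigma_0$ preserves each bracket of $\tau$. For $[\mathfrak{\dot g}\otimes A,\mathfrak{\dot g}\otimes A]$ the part $[x_1,x_2]\otimes f_1f_2$ is immediate since $t^{\textbf{m}}\mapsto t^{\textbf{m}'}$ is an algebra automorphism of $A$, and the central term matches because $d_q(t^{\textbf{m}'})=m'_q t^{\textbf{m}'}$ and $\sum_p m_p\sigma_0(t^{\textbf{m}+\textbf{n}}k_p)=\sum_q m'_q t^{\textbf{m}'+\textbf{n}'}k_q$. The brackets $[\mathcal{D},\mathfrak{\dot g}\otimes A]$ and $[\mathcal{D},\mathcal{K}]$ reduce to the elementary identities $\sum_p b_{ap}m'_p=m_a$ (that is, $\textbf{A}^{-1}\textbf{A}=I$) and $\sum_p b_{ap}a_{pb}=\delta_{ab}$. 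The one laborious step is $[\mathcal{D},\mathcal{D}]$, where one must see that the two-cocycle $\phi$ transforms compatibly; I expect this to be the main obstacle. Concretely, for $\phi_1$ and $\phi_2$ separately one computes, using again $\sum_p b_{ap}n'_p=n_a$, that $\phi_i(\sigma_0(t^{\textbf{m}}d_a),\sigma_0(t^{\textbf{n}}d_b))=\sigma_0(\phi_i(t^{\textbf{m}}d_a,t^{\textbf{n}}d_b))$, whence the same holds for any fixed linear combination $\phi=\lambda_1\phi_1+\lambda_2\phi_2$; the remaining non-cocycle part of $[\mathcal{D},\mathcal{D}]$ is handled exactly as $[\mathcal{D},\mathcal{K}]$. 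Finally, bijectivity is clear: $\textbf{A}^{-1}\in SL_{\nu+1}(\mathbb{Z})$ produces by the same recipe a Lie algebra endomorphism $\sigma_0'$ of $\tau$, and evaluating $\sigma_0\sigma_0'$ and $\sigma_0'\sigma_0$ on the generators $x\otimes t^{\textbf{m}}$, $t^{\textbf{m}}k_j$, $t^{\textbf{m}}d_j$ gives the identity. Tensoring with $\mathrm{id}_B$ then yields the asserted automorphism $\sigma$ of $\tau(B)$.
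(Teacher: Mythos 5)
Your proposal is correct, and it in fact supplies more than the paper does: the paper states this lemma with no proof at all, since it is the standard twisting automorphism of the (full) toroidal Lie algebra from the earlier literature ([3]--[5]), transported to $\tau(B)$ in exactly the way you describe. Your reduction to $B=\mathbb{C}$ via $\sigma=\sigma_0\otimes\mathrm{id}_B$ is the right observation (the bracket $[x_1\otimes b_1,x_2\otimes b_2]=[x_1,x_2]\otimes b_1b_2$ makes this immediate), and your verification on $\tau$ is complete and accurate: the well-definedness check on $\mathcal{K}$ against the relation $\sum_p r_p t^{\mathbf r}k_p=0$, the identities $\sum_p b_{ap}n'_p=n_a$ and $\sum_p b_{ap}a_{pb}=\delta_{ab}$ handling the $[\mathcal{D},\mathfrak{\dot g}\otimes A]$ and $[\mathcal{D},\mathcal{K}]$ brackets, the cocycle compatibility $\phi_i(\sigma_0(\cdot),\sigma_0(\cdot))=\sigma_0(\phi_i(\cdot,\cdot))$ for $i=1,2$ (and hence for any fixed linear combination $\phi$), and invertibility via the same construction applied to $\textbf{A}^{-1}\in SL_{\nu+1}(\mathbb{Z})$ all check out. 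No gaps.
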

If the center acts non-trivially, then by the above lemma, we can twist the Lie algebra $\tau(B)$ such that the central charges are $c_{0}\neq0,\:c_{1}=c_{2}=...=c_{n}=0$. Following theorem gives the existence of the highest weight space(or the lowest space) in both the cases, namely, center acting non-trivially and center acting non-trivially.
\begin{theo}
Let $V\in \mathcal{C}_{fin} $. Then
\begin{itemize}
	\item[(1)] If $c_{0}>0$ and $c_{1}=c_{2}=\cdots=c_{\nu}=0$, then
	\begin{equation}
		\{v\in V\mid \tau(B)_{+}\cdot v\notag=0\}\neq 0.
	\end{equation}
\item[(2)] If $c_{0}<0$ and $c_{1}=c_{2}=\cdots=c_{\nu}=0$, then
\begin{equation}
   \{v\in V \mid \tau(B)_{-}\cdot v\notag=0\}\neq 0.
\end{equation}
\item[(3)] If $c_{0}=c_{1}=\cdots=c_{\nu}=0$, then there exists non-zero elements $ v,w\in V$
 such that \begin{equation}
 	(\mathfrak{\dot{g}}_{+}\otimes A\otimes B)\cdot v\notag=0, \; \; \text{and} \; \; 
(\mathfrak{\dot{g}_{-}}\otimes A\otimes B)\cdot w=0.
 	 \end{equation}
\end{itemize}
\end{theo}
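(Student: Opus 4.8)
My plan is to run, with the coefficient algebra $B$ attached throughout, the arguments used for $\tau$ in [3], [4] and for $\mathcal D\otimes B$ in [9]; this costs almost nothing extra, because the only property of the ``new'' elements that those arguments need is that $x_\alpha\otimes t_0^{m_0}t^{\underline m}\otimes b$ acts locally nilpotently on $V$ for \emph{every} $b\in B$, which is built into the definition of integrability. Hence every real root vector of $\tau(B)$ — in particular each $e_\alpha\otimes t_0^{m_0}t^{\underline m}\otimes b$ with $\alpha\in\dot\Delta$ — is a locally nilpotent operator on $V$, and this, together with Lemma 3.1 (finite weight multiplicities, $\mathcal W$-invariance of $P(V)$ and of multiplicities), is the entire toolkit. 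Below I sketch the skeleton and point to where the real work lies.

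\emph{Cases (1) and (2).} By Lemma 3.3 I may twist so that $c_0\neq 0$ and $c_1=\cdots=c_\nu=0$; I do case (1), $c_0>0$, case (2) being entirely analogous with $\tau(B)_+$ replaced by $\tau(B)_-$ throughout. Since $\{v\in V:\tau(B)_+v=0\}$ is $\mathfrak h$-stable and a nonzero weight vector $v\in V_\mu$ lies in it exactly when $V_{\mu+\beta}=0$ for every root $\beta$ of $\tau(B)_+$, it suffices to produce a weight of $V$ maximal for the partial order defined by the roots of $\tau(B)_+$; this is a genuine partial order, because there is a linear functional — e.g.\ a large multiple of the $\delta_0$-coefficient plus the $\dot Q$-height of the $\dot{\mathfrak h}$-component — positive on every root of $\tau(B)_+$, so the positive cone is salient. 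The construction of the maximal weight has two stages. First, using the affine subalgebra $\mathfrak g_a$ and the hypothesis $c_0>0$ — concretely, the locally nilpotent real root vectors $e_\beta\otimes t_0^{k}$ ($\beta\in\dot\Delta$, $k\ge 1$), which raise the $d_0$-eigenvalue, together with finiteness of weight multiplicities, as in [3] — I would show the $d_0$-eigenvalues occurring on $V$ are bounded above; fixing the top nonzero $d_0$-layer $V_{\mathrm{top}}$, the positive-$d_0$-degree part of $\tau(B)_+$, namely $(\dot{\mathfrak g}\otimes t_0\mathbb C[t_0,t_1^{\pm 1},\dots,t_\nu^{\pm 1}]\oplus\mathfrak{b}_+)\otimes B$, already kills $V_{\mathrm{top}}$. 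Second, inside $V_{\mathrm{top}}$ the remaining positive part is the map multiloop algebra $\dot{\mathfrak g}_+\otimes\mathbb C[t_1^{\pm 1},\dots,t_\nu^{\pm 1}]\otimes B$, whose root vectors $e_\alpha\otimes t^{\underline m}\otimes b$ (with $\alpha+\delta_{\underline m}\in\Delta^{re}$) are also locally nilpotent; a maximal-weight argument on $V_{\mathrm{top}}$ built from these local nilpotencies, finite multiplicities, and the reflections $r_{\alpha+\delta_{\underline m}}$ (which move weights along the full real-root directions and so tie the isotropic directions $\delta_1,\dots,\delta_\nu$ to the finite system $\dot\Delta$) — exactly the argument of [4] — produces $v\neq 0$ in $V_{\mathrm{top}}$ annihilated by $\dot{\mathfrak g}_+\otimes\mathbb C[t_1^{\pm 1},\dots,t_\nu^{\pm 1}]\otimes B$, hence by all of $\tau(B)_+$.

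\emph{Case (3).} Now every $k_i$ acts by $0$ on $P(V)$, so for a real root $\gamma=\alpha+m_0\delta_0+\delta_{\underline m}$ the coroot $\check\gamma$ evaluates to $\lambda(\check\alpha)$ on $\lambda\in P(V)$, whence $r_\gamma\lambda=\lambda-\lambda(\check\alpha)\gamma$; moreover $V$ restricted to $\dot{\mathfrak g}$ is integrable with every $e_{\pm\alpha}$ locally nilpotent, hence a direct sum of finite-dimensional simple $\dot{\mathfrak g}$-modules. The target is only a $v$ with $(\dot{\mathfrak g}_+\otimes A\otimes B)v=0$ and a $w$ with $(\dot{\mathfrak g}_-\otimes A\otimes B)w=0$, i.e.\ a highest- and a lowest-weight vector for the map multiloop algebra $\dot{\mathfrak g}\otimes A\otimes B$. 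I would choose $\lambda\in P(V)$ so that no weight $\lambda+\beta$ with $\beta$ a positive sum of roots $\alpha+m_0\delta_0+\delta_{\underline m}$ ($\alpha\in\dot\Delta_+$) lies in $P(V)$; the existence of such $\lambda$ is the content of the maximal-weight argument of [9] (compare [4]), which combines local finiteness over $\dot{\mathfrak g}$, finite weight multiplicities, and the reflections $r_\gamma$ moving weights along the full real-root directions so as to control the isotropic and $t_0$-directions simultaneously. Then $V_\lambda$ is annihilated by $\dot{\mathfrak g}_+\otimes A\otimes B$, and one gets $w$ symmetrically.

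\emph{Where the difficulty is.} The one nontrivial step is the maximal-weight argument in the presence of the isotropic loop directions. In cases (1)--(2) it splits into (a) boundedness of the $d_0$-degree, which genuinely needs $c_0\neq 0$: the relevant $\mathfrak{sl}_2$-triples are of the form $\{e_{-\theta}\otimes t_0,\ e_\theta\otimes t_0^{-1},\ -\check\theta+\tfrac{2}{(\theta|\theta)}k_0\}$ (with $\theta$ the highest root of $\dot{\mathfrak g}$), and their local nilpotency forces boundedness only because on a weight $\lambda$ the Cartan element acts by $\tfrac{2}{(\theta|\theta)}c_0-\lambda(\check\theta)$; and (b) a separate maximal-weight argument for the multiloop part; in case (3), with no central direction available, the whole burden rests on combining local finiteness over $\dot{\mathfrak g}$, finite multiplicities and $\mathcal W$-invariance. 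In all three cases this reproduces, with $B$ attached, the technical core of [3], [4] and [9].
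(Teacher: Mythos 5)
Your overall strategy --- rerun the maximal-weight arguments of [3], [4], [5], [9], [14] with $B$ carried along, the whole point being that integrability already gives local nilpotency of $x_\alpha\otimes t_0^{m_0}t^{\underline m}\otimes b$ for \emph{every} $b\in B$ --- is the same as the paper's, whose proof consists of exactly three citations: the nonemptiness of $\{\lambda\in P(V)\mid V_{\lambda+\eta}=0\ \forall\,\eta\in\dot Q_+\setminus\{0\}\}$ as in [5, Thm.\ 2.1], an application of [14, Lemma 3.4] to the affine subalgebra $\mathfrak g_a$, and a final appeal to the remainder of [5, Thm.\ 2.1]. Your case (3) is consistent with this.

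In cases (1)--(2), however, your first step contains a genuine gap. You assert that the $d_0$-eigenvalues occurring on all of $V$ are bounded above, justified by the affine $\mathfrak{sl}_2$-triples, local nilpotency, positivity of $c_0$ and finiteness of weight multiplicities. The affine input you are invoking (integrable $+$ positive level $+$ finite-dimensional weight spaces $\Rightarrow$ direct sum of highest weight modules, hence bounded $d_0$-spectrum) requires finite-dimensional weight spaces \emph{with respect to} $\mathfrak h_a=\dot{\mathfrak h}\oplus\mathbb{C}k_0\oplus\mathbb{C}d_0$, and $V$ does not have them: an $\mathfrak h_a$-weight space of $V$ is $\bigoplus_{\underline m\in\mathbb{Z}^\nu}V_{\mu+\delta_{\underline m}}$, which is infinite-dimensional in general (indeed the paper's Lemma 4.2 shows every $T_{\underline m}$ is nonzero of the same dimension, and all the $T_{\underline m}$ lie in a single $\mathfrak h_a$-weight space). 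What the affine theory actually yields is an upper bound on the $d_0$-spectrum of each slice $\{v\mid d_i v=(\lambda_0(d_i)+s_i)v,\ 1\le i\le\nu\}$ separately, and nothing you have used at that point forces these bounds to be uniform in $\underline s$: a direct sum $\bigoplus_{\underline s}L(\Lambda)$ with the $d_0$-grading of the $\underline s$-th summand shifted by an arbitrary unbounded function of $\underline s$ satisfies local nilpotency, $c_0>0$ and finite $\mathfrak h$-multiplicities, yet has no top $d_0$-layer. Tying the slices together requires the loop directions of $\tau$ itself --- the reflections $r_{\alpha+m_0\delta_0+\delta_{\underline m}}$, usable precisely because $c_1=\cdots=c_\nu=0$ --- together with irreducibility and finite $\mathfrak h$-multiplicities, and this is the content of the part of [5, Thm.\ 2.1] that is carried out \emph{after} the finite-part and affine-part maximality, not before. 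So either reorder your argument to match the paper's (finite-part maximality first, then [14, Lemma 3.4] inside a single $\mathbb{Z}^\nu$-graded slice, where the $\mathfrak h_a$-multiplicities are finite, then the isotropic directions), or supply the missing uniformity argument for the slicewise $d_0$-bounds.
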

\begin{proof} On the same lines of Theorem 2.1 of  [5], one can deduce that the set $\{\lambda \in P(V) \mid V_{\lambda + \eta } =0,\:  \forall \eta \in \dot{Q}_{+}-\{0\} \}$ is non-empty. \\
    (1) Since $V_{\underline\lambda}$ is an irreducible $\mathfrak{g}_{a}$-module with finite dimensional weight spaces, by Lemma 3.4 of [14], there exists $\lambda \in P(V_{\underline{\lambda}})$ such that $(\lambda|\alpha)\geq 0$ and \begin{equation*}
        V_{\lambda + \alpha}=0, \; \; \;  \forall \alpha \in \Delta_{a,+}.
    \end{equation*}
The remaining proof is the same as that of Theorem 2.1 of [5].

\end{proof}
\section {  Irreducible modules of $\tau(B)$ in $\mathcal{C}_{fin}$ with non-trivial action of center }
Using Lemma 3.4, we assume that $c_{0}\neq0,\: c_{1}=c_{2}=\cdots=c_{\nu}=0$ and the case $c_{0}<0$ is similar. Let \begin{equation*}
	 T=\{v\in V \mid \tau(B)_{+}\cdot v=0\}.
\end{equation*}
One can verify that $T$ is a $\tau(B)_{0}$-module. Since V is irreducible, it follows that \begin{equation*}
	V=U(\tau(B)_{-})\cdot T.
	\end{equation*}
Therefore, $T$ is irreducible as a $\tau(B)_{0}$-module. Let \begin{equation}
	T\notag=\bigoplus_{\underline m \in \mathbb{Z}^{\nu}}T_{\underline m},
\end{equation}
where $T_{\underline m}=\{v\in T\mid d_{i}\otimes 1\cdot v=(\lambda_{0}(d_{i})+m_{i})v,\: 1\leq i \leq \nu \}$, for some $\lambda_{0}\in P(V),\; \underline{m}=(m_{1},m_{2},...,m_{\nu})\in \mathbb{Z}^{\nu} $. Therefore, $T$ is $\mathbb{Z}^{\nu}$-graded and $T_{\underline{m}}$ are all finite dimensional.
 We record the following Lemmas important for our purpose.

\begin{lemm}
	If there exists a non-zero vector $w$ in $ T$ such that $t^{\underline m}k_{i}\otimes b\cdot w=0 $ for \(0\leq i \leq \nu\), then $t^{\underline m}k_{i}\otimes b $ is locally nilpotent on $ T$.
\end{lemm}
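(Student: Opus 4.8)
The plan is to use that $X:=t^{\underline m}k_i\otimes b$ lies in a large abelian ideal of $\tau(B)_0$, so that $\mathrm{ad}(X)$ is $2$-step nilpotent on $\tau(B)_0$, and then to upgrade the single relation $X\cdot w=0$ to local nilpotency on all of $T$ via the irreducibility of $T$ as a $\tau(B)_0$-module. The starting observation is that $(\mathcal K\cap\tau_0)\otimes B$ is an abelian ideal of $\tau(B)_0=\tau_0\otimes B$: indeed $\mathcal K$ is central in $\mathcal{\tilde L}(\mathfrak{\dot g})$ (so it commutes both with $\mathfrak{\dot h}\otimes\mathbb C[t_1^{\pm1},\dots,t_\nu^{\pm1}]$ and with itself), while the brackets $[t_0^{m_0}t^{\underline m}d_a,t_0^{n_0}t^{\underline n}k_b]$ recorded in Section~2 again take values in $\mathcal K$; tensoring with the commutative algebra $B$ does not disturb any of this. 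Hence $[X,\tau(B)_0]\subseteq(\mathcal K\cap\tau_0)\otimes B$ and $[X,(\mathcal K\cap\tau_0)\otimes B]=0$, so $\mathrm{ad}(X)^2=0$ on $\tau(B)_0$, i.e. $[X,[X,Y]]=0$ for every $Y\in\tau(B)_0$.

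With this in hand, a routine induction on $s$ (using that the $\tau(B)$-action is a Lie algebra homomorphism) gives, for all $v\in T$ and $s\geq1$,
\begin{equation*}
X^{s}(Y v)=Y(X^{s}v)+s\,[X,Y]\,(X^{s-1}v),\qquad Y\in\tau(B)_0 .
\end{equation*}
In particular, if $X^{r}v=0$ then $X^{r+1}(Yv)=Y(X^{r+1}v)+(r+1)[X,Y](X^{r}v)=0$. Consequently
\[
T':=\{\,v\in T\mid X^{r}v=0\text{ for some }r\geq1\,\}
\]
is a $\tau(B)_0$-submodule of $T$: it is a subspace, being an increasing union of kernels, and the displayed identity shows it is $\tau(B)_0$-stable. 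By hypothesis $w\in T'$, so $T'\neq0$, and since $T$ is irreducible over $\tau(B)_0$ we get $T'=T$; that is exactly the statement that $X=t^{\underline m}k_i\otimes b$ acts locally nilpotently on $T$. The same reasoning applies to each $i\in\{0,1,\dots,\nu\}$.

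The argument is essentially formal once the abelian-ideal observation is in place, so I do not anticipate a real obstacle. The only place demanding care is the verification of $\mathrm{ad}(X)^2=0$ summand by summand on $\tau_0\otimes B$ — in particular on the derivation part $\mathfrak{b}_0$, where $[\mathcal D,\mathcal K]\neq0$; one must check that, even there, $[X,\,\cdot\,]$ outputs elements of $\mathcal K\otimes B$, on which $\mathrm{ad}(X)$ vanishes because $\mathcal K$ is central.
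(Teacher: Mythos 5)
Your proposal is correct and takes essentially the same route as the paper: both arguments rest on the observation that $[X,[X,Y]]=0$ for $X=t^{\underline m}k_i\otimes b$ and every $Y\in\tau(B)_0$ (because $[X,\tau(B)_0]\subseteq\mathcal K\otimes B$, which commutes with $X$), combined with the irreducibility of $T$ over $\tau(B)_0$. The only cosmetic difference is the finishing step — the paper writes $T=U(\tau(B)_0)\cdot w$ and shows $X^{k+1}$ kills any length-$k$ monomial applied to $w$, whereas you show the set of vectors annihilated by some power of $X$ is a nonzero $\tau(B)_0$-submodule — and these are the same computation packaged differently.
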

	\begin{proof} Note that for $\underline{r}\in \mathbb{Z}^{\nu},\: i,j\in \{0,1,...,\nu \},\: b\in B$, we have
	\begin{equation*}
		[[t^{\underline{r}}d_{a}\otimes b, t^{\underline m}k_{i}\otimes 1],t^{\underline m} k_{i}\otimes 1]\notag=[[t^{\underline{r}}d_{a},t^{\underline{m}}k_{i}],t^{\underline{m}}k_{i}]\otimes{b}=0.
	\end{equation*}
Therefore if there exists some  $0\neq w\in T$ such that $t^{\underline m}k_{i}.w=0$, then it can be shown inductively that 
\begin{equation*}
	(t^{\underline m}k_{i}\otimes 1)^{k+1}\cdot (t^{\underline{r_1}}d_{i_1}\otimes b_{i_1}\cdot 
t^{\underline r_2}d_{i_2}\otimes b_{i_2}\cdots t^{\underline r_k}d_{i_k}\otimes b_{i_k})\cdot w=0,
\end{equation*}

 for any $k\in \mathbb{N},\: \underline r_{1},\underline r_{2},...,\underline r_{k} \in \mathbb{Z}^{\nu},\: i_{j} \in \{0,...,\nu \},\: b_{i_j}\in B,\:\forall \: j\in \{1,...,k\}$. 
 As $T$ is irreducible $\tau(B)_{0}$-module, we get that $t^{\underline m} k_{i}\otimes 1$ is locally nilpotent. \end{proof}
 \begin{lemm}
	For any $\underline{m}\in \mathbb{Z}^{\nu}$, $v\in T\setminus\{0\}$, we have $t^{\underline m}k_{0}\otimes1\cdot v\neq 0$, and hence 
 \begin{equation*}
		dimT_{\underline m}=dimT_{\underline n}, 
\end{equation*}
for all \underline{m},$\underline{n}\in \mathbb{Z}^{\nu}$.
\end{lemm}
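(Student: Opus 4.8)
\emph{Strategy.} I would argue by contradiction, producing inside $\tau(B)_{0}$ a three--dimensional Heisenberg algebra whose centre acts on $T$ by the nonzero scalar $c_{0}$, and then deriving an impossible trace identity on a finite dimensional invariant subspace.

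\emph{Set-up.} The case $\underline m=\underline 0$ is immediate, since $t^{\underline 0}k_{0}\otimes 1=k_{0}\otimes 1$ is central in $\tau(B)$ and acts on each weight space $T_{\underline n}$ by $\lambda(k_{0})=c_{0}\neq 0$, hence injectively. Assume $\underline m\neq\underline 0$ and fix $a\in\{1,\dots,\nu\}$ with $m_{a}\neq 0$. From the defining brackets one computes
\[
[\,d_{a}\otimes 1,\ t^{\underline m}k_{0}\otimes 1\,]=m_{a}\,t^{\underline m}k_{0}\otimes 1,\qquad
[\,t^{-\underline m}d_{a}\otimes 1,\ t^{\underline m}k_{0}\otimes 1\,]=m_{a}\,(k_{0}\otimes 1).
\]
Write $X=t^{-\underline m}d_{a}\otimes 1$ and $Y=t^{\underline m}k_{0}\otimes 1$. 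Since $k_{0}\otimes 1$ is central in $\tau(B)$ and acts on $T$ by $c_{0}$, we get $[X,Y]=m_{a}c_{0}\,\mathrm{id}_{T}$ with $m_{a}c_{0}\neq 0$; moreover $Y$ raises the $\mathbb Z^{\nu}$--grading of $T$ by $\underline m$, $X$ lowers it by $\underline m$, and $d_{a}\otimes 1$ acts on $T_{\underline n}$ as the scalar $\lambda_{0}(d_{a})+n_{a}$, so $X$ and $Y$ behave like ladder operators for this grading.

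\emph{The argument.} Suppose $Y\cdot v=0$ for some $v\in T\setminus\{0\}$. By Lemma~4.1, $Y$ acts locally nilpotently on $T$. Choose $\underline n_{0}$ with $T_{\underline n_{0}}\neq 0$ and set $T(\underline n_{0})=\bigoplus_{j\in\mathbb Z}T_{\underline n_{0}+j\underline m}$, a module over the Heisenberg algebra $\Span\{X,Y,k_{0}\otimes 1\}$. Local nilpotence of $Y$ together with $[X,Y]=m_{a}c_{0}\,\mathrm{id}$ forces the Fock--type decomposition $T(\underline n_{0})\cong\bigl(\ker Y\cap T(\underline n_{0})\bigr)\otimes\mathbb C[X]$ as $\Span\{X,Y\}$--modules (with $X$ acting by multiplication); since all weight spaces are finite dimensional, the graded components of $\ker Y\cap T(\underline n_{0})$ vanish in all large degrees, and hence $T_{\underline n_{0}+j\underline m}=0$ for $j\gg 0$. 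I then claim $T_{\underline n_{0}+j\underline m}=0$ also for $j\ll 0$; granting this, $T(\underline n_{0})$ is finite dimensional, and since $X$ and $Y$ preserve it,
\[
m_{a}c_{0}\,\dim T(\underline n_{0})=\operatorname{tr}\!\bigl([X,Y]\big|_{T(\underline n_{0})}\bigr)=0,
\]
forcing $T(\underline n_{0})=0$ and contradicting $T_{\underline n_{0}}\neq 0$. Thus $t^{\underline m}k_{0}\otimes 1$ has trivial kernel on $T$. For the final assertion, $t^{\underline m}k_{0}\otimes 1$ has $\mathfrak h$--weight $\delta_{\underline m}$, so it maps $T_{\underline n}$ into $T_{\underline n+\underline m}$, and injectivity gives $\dim T_{\underline n}\le\dim T_{\underline n+\underline m}$; applying this with $\underline m$ and with $-\underline m$ yields $\dim T_{\underline m}=\dim T_{\underline n}$ for all $\underline m,\underline n\in\mathbb Z^{\nu}$.

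\emph{Main obstacle.} The real work is the ``bounded below'' claim: local nilpotence of $t^{\underline m}k_{0}\otimes 1$ controls only the $+\underline m$ end of each line $\{\underline n_{0}+j\underline m\}_{j}$. To get the other end I would use the dichotomy of Lemma~4.1 applied to $t^{-\underline m}k_{0}\otimes 1$: if it is locally nilpotent, the same Heisenberg/Fock analysis with the companion relation $[\,t^{\underline m}d_{a}\otimes 1,\ t^{-\underline m}k_{0}\otimes 1\,]=-m_{a}(k_{0}\otimes 1)$ gives boundedness below directly; if instead $t^{-\underline m}k_{0}\otimes 1$ is injective, I would feed this injectivity back into the decomposition $T(\underline n_{0})\cong(\ker Y\cap T(\underline n_{0}))\otimes\mathbb C[X]$ and use the irreducibility of $T$ as a $\tau(B)_{0}$--module, together with finiteness of all weight spaces, to force $\ker Y\cap T(\underline n_{0})$ itself to be bounded below, hence finite dimensional. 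The toral automorphism of Lemma~3.3 is a convenient auxiliary tool here, letting one first move $\underline m$ to the standard position $(d,0,\dots,0)$ while preserving $\mathcal C_{fin}$, the triangular decomposition $\tau(B)=\tau(B)_{+}\oplus\tau(B)_{0}\oplus\tau(B)_{-}$ and all the central charges. Once this boundedness is established the contradiction above closes immediately; everything else is the bracket identities, the elementary structure of locally nilpotent Weyl-algebra modules, and the vanishing of the trace of a commutator on a finite dimensional space.
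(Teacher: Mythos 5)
The paper itself offers no argument here beyond deferring to [4] and [5], so the comparison must be with the method of those references. The first half of your proposal is sound and is the right machinery: the case $\underline m=\underline 0$ via centrality of $k_{0}$, the dichotomy supplied by Lemma 4.1, the Heisenberg pair $X=t^{-\underline m}d_{a}$, $Y=t^{\underline m}k_{0}$ with $[X,Y]=m_{a}c_{0}\,\mathrm{id}$, the Fock decomposition $T(\underline n_{0})\cong(\ker Y\cap T(\underline n_{0}))\otimes\mathbb C[X]$ forcing $T_{\underline n_{0}+j\underline m}=0$ for $j\gg 0$, and the trace--of--a--commutator endgame (the same device the paper uses in Lemma 4.6, run in the opposite direction). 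Your sub-case in which $t^{-\underline m}k_{0}$ also has a kernel is likewise fine: the mirrored Fock analysis kills the line for $j\ll 0$ and the telescoped trace identity finishes.

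The gap is the remaining sub-case, where $t^{-\underline m}k_{0}$ is injective, and your proposed fix there is not an argument. Injectivity of $t^{-\underline m}k_{0}$ only yields $\dim T_{\underline n}\le\dim T_{\underline n-\underline m}$, which is entirely consistent with the Fock picture you have already established: $\dim T_{\underline n_{0}+j\underline m}=\sum_{i\ge j}\dim\bigl(\ker Y\cap T_{\underline n_{0}+i\underline m}\bigr)$ is finite for every fixed $j$, vanishes for $j\gg 0$, and may grow without bound as $j\to-\infty$ while every individual weight space stays finite dimensional; moreover $\ker Y$ is not a $\tau(B)_{0}$-submodule (the brackets $[t^{\underline s}d_{b},Y]$ leave the line $\mathbb Z\underline m$), so irreducibility of $T$ cannot be ``fed back'' in the way you suggest. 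The missing ingredient --- the one the cited references use --- is the integrability of the ambient module $V$, i.e.\ Lemma 3.2(1),(2): for a real root $\gamma=\alpha+m_{0}\delta_{0}+\delta_{\underline m}$ one computes $r_{\gamma}r_{\alpha+m_{0}\delta_{0}}(\lambda)=\lambda+\lambda(\check\gamma)\delta_{\underline m}$ (here $c_{1}=\cdots=c_{\nu}=0$ is used, and $c_{0}\neq 0$ guarantees a real root with $N:=\lambda(\check\gamma)\neq 0$), whence $\dim V_{\lambda}=\dim V_{\lambda+N\delta_{\underline m}}$. Since the weights $\lambda_{0}+\delta_{\underline n}$ of $T$ are maximal, $T_{\underline n}=V_{\lambda_{0}+\delta_{\underline n}}$, so $\dim T_{\underline n_{0}+j\underline m}$ is periodic in $j$; combined with your vanishing for $j\gg 0$ this forces $T_{\underline n_{0}}=0$, the desired contradiction, with no case division on $t^{-\underline m}k_{0}$ needed at all. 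Without some such input from $V$ (Weyl-group invariance of multiplicities or an equivalent), the line cannot be shown to be bounded below and your finite-dimensional trace identity has nothing to act on.
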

\begin{proof} The proof is similar to [4] and [5]. \end{proof}

By Lemma 4.2, we let 
\begin{equation}
	dimT_{\underline m}\notag=dimT_{\underline n}=n, \: \forall\: \underline{m},\underline{n}\in \mathbb{Z}^{\nu}.
\end{equation}
Let $\{v_{1},v_{2},...,v_{n}\}$ be a basis of $ T_{\underline{0}}$ and if 
\begin{equation*}
	v_{i}(\underline m)=\frac{1}{c_{0}}t^{\underline m}k_{0}\otimes1\cdot v_{i},1\leq i \leq n.	
\end{equation*}
It follows from Lemma 4.2 that $\{v_{i}(\underline m)\mid 1\leq i\leq n \}$ is a basis for $T_{\underline m}$. Assume that
\begin{equation*}
	\frac{1}{c_{0}}t^{\underline m}k_{0}\otimes 1\cdot (v_{i}(\underline n))_{i=1}^{n}=(v_{i}(\underline m +\underline n))_{i=1}^{n}\textbf B_{\underline m,\underline n}.
\end{equation*}
Hence $\textbf{B}_{\underline m,\underline n}$ is an invertible matrix of order $n$ and by the definition of $\textbf B_{\underline m, \underline n}$, we have \begin{equation}
	\textbf B_{\underline m,\underline n}\textbf B_{\underline r,\underline s}\notag= \textbf B_{\underline {r}, \underline {s}}\textbf B_{\underline m,\underline n}, \: \textbf B_{\underline m, \underline n}=\textbf B_{\underline n, \underline m}.
\end{equation}
Therefore, all the matrices $\textbf  B_{\underline m, \underline n},\underline m, \underline n \in \mathbb{Z}^{\nu}$ can be taken as upper triangular matrices.
\begin{lemm}
	For any $\underline{m},\underline{n}\in \mathbb{Z}^{\nu},\ \textbf B_{\underline m, \underline n}-\textbf I $ is a stricly upper triangular matrix.
\end{lemm}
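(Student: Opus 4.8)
The goal is to prove that every $\textbf{B}_{\underline m,\underline n}$ is unipotent; since these matrices have already been put in upper triangular form, this is exactly the statement that $\textbf{B}_{\underline m,\underline n}-\textbf{I}$ is strictly upper triangular. The formal skeleton I would record first is what associativity and commutativity of the $U(\tau(B)_{0})$-action on $T$ force on the family $\textbf{B}$: besides $\textbf{B}_{\underline m,\underline 0}=\textbf{I}$, the symmetry $\textbf{B}_{\underline m,\underline n}=\textbf{B}_{\underline n,\underline m}$, and the commutativity already noted above, one has the cocycle identity $\textbf{B}_{\underline m,\underline r+\underline n}\,\textbf{B}_{\underline r,\underline n}=\textbf{B}_{\underline r,\underline m+\underline n}\,\textbf{B}_{\underline m,\underline n}$. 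This is used both to organize an induction and, together with the fact that commuting products and inverses of unipotent upper triangular matrices remain unipotent, to pass from a convenient family of pairs to all pairs. Note also that $\textbf{B}_{\underline m,-\underline m}$ is precisely the matrix on $T_{\underline 0}$ of the degree-$\underline 0$ operator $\frac{1}{c_{0}^{2}}(t^{\underline m}k_{0}\otimes 1)(t^{-\underline m}k_{0}\otimes 1)$.

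The substantive input is integrability. Fix $\alpha\in\dot\Delta_{+}$ and $\underline m\in\mathbb{Z}^{\nu}$. The elements $e_{\alpha}\otimes t_{0}^{-1}t^{-\underline m}\otimes 1$ and $e_{-\alpha}\otimes t_{0}t^{\underline m}\otimes 1$ span, together with their bracket $h=\check\alpha-(e_{\alpha}|e_{-\alpha})\big(k_{0}+\sum_{p\ge 1}m_{p}k_{p}\big)$, an $sl_{2}$-triple attached to the real root $\alpha-\delta_{0}-\delta_{\underline m}$; since the two root vectors act locally nilpotently this triple acts locally finitely on $V$, while $e_{-\alpha}\otimes t_{0}t^{\underline m}\otimes 1\in\tau(B)_{+}$ annihilates $T$. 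I would evaluate on vectors of $T$ both the $sl_{2}$-relations for this triple and the companion brackets $[\,e_{-\alpha}\otimes t_{0}^{-1}t^{\underline m}\otimes 1,\ e_{\alpha}\otimes t_{0}t^{\underline n}\otimes 1\,]=-\check\alpha\otimes t^{\underline{m+n}}\otimes 1-(e_{-\alpha}|e_{\alpha})\,t^{\underline{m+n}}k_{0}\otimes 1+(\text{terms in }t^{\underline{m+n}}k_{p},\ p\ge 1)$, discarding all terms of positive $t_{0}$-degree by means of $\tau(B)_{+}\cdot T=0$ and using $c_{1}=\cdots=c_{\nu}=0$. Combined with the auxiliary statement, to be proved with the help of Lemma 4.1, that the $\mathfrak{\dot h}\otimes A_{\nu}$-part and the elements $t^{\underline s}k_{p}\otimes b$ with $p\ge 1$ act trivially, or at least locally nilpotently, on $T$, this should present the relevant degree-$\underline 0$ operators built from the $t^{\underline s}k_{0}\otimes 1$ — whose matrices on $T_{\underline 0}$ are the $\textbf{B}_{\underline m,-\underline m}$ and, through the companion brackets, a convenient family of the $\textbf{B}_{\underline m,\underline n}$ — as operators that act locally nilpotently on $T$. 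Since every $T_{\underline n}$ is finite dimensional, local nilpotence becomes nilpotence on $T_{\underline n}$, so these $\textbf{B}$'s are nilpotent upper triangular relative to $\textbf{I}$, hence unipotent, and the cocycle identity of the first step upgrades this to unipotence of all $\textbf{B}_{\underline m,\underline n}$.

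I expect the genuinely delicate point to be the interior of the second step. Besides the control of the side terms just mentioned, the real difficulty is that the operator $e_{\alpha}\otimes t_{0}t^{\underline n}\otimes 1$ has to be understood not only on $T$ but on $U(\tau(B)_{-})\cdot T$, and one has to show it carries such vectors back into $T$ modulo terms supported on strictly lower steps of the flag $\{v_{1}(\underline{\cdot}),\dots,v_{j}(\underline{\cdot})\}$, so that the nilpotence conclusion is obtained by induction on $j$. This ``descend and re-ascend'' analysis is precisely the subtle step carried out for $B=\mathbb{C}$ in [4] and [5]; the only new feature here is the coefficient algebra, that is, the additional generators $t^{\underline m}k_{0}\otimes b$ with $b\ne 1$, which enter all of these brackets linearly in $b$ and are absorbed by the same estimates.
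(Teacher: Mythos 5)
The paper gives no argument for this lemma at all (its proof is the single line ``Similar to [4] and [5]''), so your proposal can only be measured against the strategy of those references. On that score your road map is reasonable: you correctly record the formal identities ($\textbf B_{\underline m,\underline 0}=\textbf I$, symmetry, commutativity, the cocycle relation), you correctly identify $c_0^2\textbf B_{\underline m,-\underline m}$ as the matrix of $t^{\underline m}k_{0}\otimes 1\cdot t^{-\underline m}k_{0}\otimes 1$ on $T_{\underline 0}$, the $sl_2$-triple you attach to $\alpha-\delta_0-\delta_{\underline m}$ is the right one (its positive part lies in $\tau(B)_+$ and kills $T$, its negative part is locally nilpotent by integrability), and you are right that the coefficient algebra $B$ is essentially invisible here since all the operators involved carry $b=1$ — the only adjustment being that $T$ is irreducible only over $\tau(B)_0$, so the ``one annihilated vector spreads to all of $T$'' steps must go through Lemma 4.1 in its $B$-form, as you intend.

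There are, however, two concrete gaps. First, the final reduction fails as stated: symmetry, $\textbf B_{\underline m,\underline 0}=\textbf I$, the cocycle identity, and unipotence of the anti-diagonal family $\textbf B_{\underline m,-\underline m}$ do \emph{not} together force unipotence of every $\textbf B_{\underline m,\underline n}$. At the level of the common eigenvalues $\beta(\underline m,\underline n)$, any function $\beta(\underline m,\underline n)=f(\underline m+\underline n)/\bigl(f(\underline m)f(\underline n)\bigr)$ with $f(\underline 0)=1$ and $f(-\underline m)=f(\underline m)^{-1}$ — for instance $f(\underline m)=e^{m_1^{3}}$ — satisfies all the formal identities and equals $1$ on the anti-diagonal while $\beta\not\equiv 1$. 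Your ``companion brackets'' produce single operators $t^{\underline m+\underline n}k_0\otimes 1$, not products of two $k_0$-operators, so it is not clear how they yield any $\textbf B_{\underline m,\underline n}$ beyond $\underline n\in\{\underline 0,-\underline m\}$; one genuinely has to control all degree-zero words $t^{\underline r_1}k_0\cdots t^{\underline r_k}k_0$, i.e.\ prove local nilpotence of $t^{\underline m}k_0\otimes 1\cdot t^{\underline n}k_0\otimes 1-c_0t^{\underline m+\underline n}k_0\otimes 1$ directly for all $\underline m,\underline n$ — which is exactly the part you defer. Second, the auxiliary facts you invoke (triviality or local nilpotence of $t^{\underline s}k_p\otimes b$ for $p\ge 1$ and of the $\mathfrak{\dot h}\otimes A_\nu$-part on $T$) are established in this paper only in Lemmas 4.6--4.10, and the proof of Lemma 4.7 derives its contradiction from $\operatorname{tr}\textbf B_{-\underline m,\underline m}^{-1}=0$ being impossible, which is precisely the unipotence you are trying to prove; quoting those results here is circular. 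The standard escape, and what makes the computation in [4],[5] close, is to put $\underline m=\underline 0$ in one slot, so that $[\,e_{\alpha}\otimes t_0^{-1},\,e_{-\alpha}\otimes t_0t^{\underline n}\,]=\check\alpha\otimes t^{\underline n}-(e_\alpha|e_{-\alpha})\,t^{\underline n}k_0$ contains no $k_p$ with $p\ge 1$ at all. As written, your text is a correct outline whose crux — the ``descend and re-ascend'' control of $e_{\alpha}\otimes t_0t^{\underline n}$ on $U(\tau(B)_-)\cdot T$ and the treatment of general products of the $t^{\underline r}k_0$ — is still outsourced to the references; that is no more, but also no less, than what the paper itself provides.
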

 \begin{proof}
 	Similar to [4] and [5].
 \end{proof} 
Let
\begin{equation}\label{4.1}
	\frac{1}{c_{0}}t^{\underline m}k_{0}\otimes b\cdot (v_{i})_{i=1}^{n} = (v_{i}(\underline m))_{i=1}^{n}\textbf B_{\underline m,b},
\end{equation} 
and it is evident that $\textbf{B}_{ \underline m. \underline n}\textbf B_{\underline r, b}=\textbf{B}_{\underline{r},b}\textbf{B}_{\underline{m},\underline{n}} $ for every $\underline{m},\underline{n},\underline{r}\in \mathbb{Z}^{\nu}$ and $b\in B$.

For any $b\in B$, $k_{0}\otimes b$ act as a scalar on V, and therefore we have the following lemma.
\begin{lemm}
	There exists a linear map $\psi:B\to \mathbb{C}$ such that $k_{0}\otimes b\cdot v = c_{0}\psi(b)v$, for every $v\in T$ and $\psi({1})=1$.
\end{lemm}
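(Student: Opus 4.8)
\emph{Proof proposal.} The plan is to exploit the fact that $k_0\otimes b$ is a central element of $\tau(B)$ and then run a Schur/Dixmier-type argument that is available for irreducible weight modules with finite-dimensional weight spaces. First I would record that $\mathcal{K}\otimes B$ is central in $\tau(B)=\tau\otimes B$: for any $x\otimes c\in\tau(B)$ with $x\in\tau$, $c\in B$, and any $b\in B$, one has $[x\otimes c,\,k_0\otimes b]=[x,k_0]\otimes cb=0$ since $\mathcal{K}$ is central in $\tau$. In particular $k_0\otimes b$ commutes with $\mathfrak{h}\otimes 1$, hence it preserves every weight space $V_\lambda$ of $V$.

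Next, fix any $\lambda\in P(V)$. Since $V\in\mathcal{C}_{fin}$, the space $V_\lambda$ is nonzero and finite-dimensional, and $k_0\otimes b$ restricts to a linear operator on it; therefore it has an eigenvalue $\mu(b)\in\mathbb{C}$ with some eigenvector $0\neq v_0\in V_\lambda$. Consider $W=\{w\in V\mid (k_0\otimes b)\cdot w=\mu(b)w\}$. Because $k_0\otimes b$ is central, $W$ is a nonzero $\tau(B)$-submodule of $V$: for $w\in W$ and $y\in\tau(B)$ we get $(k_0\otimes b)\cdot(y\cdot w)=y\cdot(k_0\otimes b)\cdot w=\mu(b)(y\cdot w)$. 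Since $V$ is irreducible, $W=V$, i.e.\ $k_0\otimes b$ acts on all of $V$ by the scalar $\mu(b)$, and a fortiori on $T$.

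Finally I would set $\psi(b)=\mu(b)/c_0$, which makes sense as $c_0\neq 0$ in this section, so that $k_0\otimes b\cdot v=c_0\psi(b)v$ for all $v\in V$, in particular for all $v\in T$. Linearity of $\psi$ is immediate from uniqueness of the scalar together with $\mathbb{C}$-linearity of $b\mapsto k_0\otimes b$: for $b,b'\in B$ and $\alpha,\beta\in\mathbb{C}$ the operator $k_0\otimes(\alpha b+\beta b')=\alpha(k_0\otimes b)+\beta(k_0\otimes b')$ acts on $V$ as $\alpha\mu(b)+\beta\mu(b')$, forcing $\mu(\alpha b+\beta b')=\alpha\mu(b)+\beta\mu(b')$ and hence the same for $\psi$. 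Moreover $\psi(1)=\mu(1)/c_0=1$, because $k_0\otimes 1=k_0\in\mathfrak{h}\otimes 1$ acts on $V$ by $\lambda(k_0)=c_0$, as recorded after Lemma 3.2. The only step that is not purely formal is the Schur-type passage from one eigenvector to a global scalar, and this is precisely where finite-dimensionality of the weight spaces is used — to produce the eigenvector; everything else is bookkeeping with the central element.
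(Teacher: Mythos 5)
Your proposal is correct and follows essentially the same route as the paper, whose proof is a one-line appeal to exactly the three facts you use: $k_0\otimes b$ restricted to each $V_\lambda$ is central, the $V_\lambda$ are finite dimensional (giving an eigenvector), and irreducibility of $V$ promotes the eigenspace to all of $V$. One small caveat in your justification: $\mathcal{K}$ is central only in $\tilde{\mathcal{L}}(\mathfrak{\dot g})$, not in the full toroidal algebra $\tau$, where $\mathcal{D}$ acts nontrivially on $\mathcal{K}$; the degree-zero element $k_0$ is nonetheless central because $[t_0^{m_0}t^{\underline m}d_a,k_0]=\delta_{a0}\sum_{p=0}^{\nu}m_p t_0^{m_0}t^{\underline m}k_p$, which vanishes by the defining relation $\sum_{p=0}^{\nu}r_p t_0^{r_0}t^{\underline r}k_p=0$ of $\mathcal{K}$, so your key identity $[x,k_0]=0$ for all $x\in\tau$ still holds.
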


\begin{proof}
Follows from the fact that $k_{0}\otimes b  \big|_{V_{\lambda}}:V_{\lambda}\to V_{\lambda} $  is central, $V_{\lambda}$'s are finite dimensional and $V$ is an irreducible module.
\end{proof}
\begin{lemm}
 There exists linear maps $\psi_{i}:B\to \mathbb{C}$ for $1\leq i \leq \nu$ such that $k_{i}\otimes b=\psi_{i}(b)$ on $T$ and $\psi_{i}(1)=0$, $1\leq i\leq \nu$.
 \end{lemm}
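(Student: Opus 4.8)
The plan is to mimic the argument already used for $k_0$ in Lemma 4.4, but now for the "outer" central elements $k_i$, $1\le i\le\nu$, while exploiting the fact that we have twisted the algebra so that the central charges $c_1=\cdots=c_\nu=0$. First I would fix $i\in\{1,\dots,\nu\}$ and observe that $k_i\otimes b$ (for $b\in B$) commutes with all of $\tau(B)_0$: indeed $\mathcal K$ is central in $\tau$, so $[x\otimes b', k_i\otimes b]=[x,k_i]\otimes b'b=0$ for every $x\in\tau$. Hence $k_i\otimes b$ preserves each weight space $V_\lambda$, and in particular preserves $T$ and each finite-dimensional piece $T_{\underline m}$, acting there as a $\tau(B)_0$-module endomorphism. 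Since $T$ is irreducible over $\tau(B)_0$ (established right after the definition of $T$), Schur's lemma forces $k_i\otimes b$ to act as a scalar $\psi_i(b)\in\mathbb C$ on $T$; linearity of $b\mapsto\psi_i(b)$ is immediate from the linearity of $b\mapsto k_i\otimes b$ and the action being a Lie algebra homomorphism.

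Next I would pin down $\psi_i(1)$. The element $k_i=k_i\otimes 1$ lies in the Cartan $\mathfrak h$, so on a weight vector $v\in V_\lambda$ it acts by $\lambda(k_i)$. By Lemma 3.2(5) (and the convention recorded just below it, $\lambda(k_i)=c_i$ for all $\lambda\in P(V)$), and by the reduction via Lemma 3.3 to the situation $c_1=\cdots=c_\nu=0$, we get $k_i\cdot v=c_i v=0$ for every $v\in T$. Therefore $\psi_i(1)=0$, completing the statement.

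The only point requiring a little care — and the step I expect to be the main (minor) obstacle — is the application of Schur's lemma: it needs $T$ to be irreducible over $\tau(B)_0$ and each $T_{\underline m}$ to be finite dimensional, both of which are available (the former from $V=U(\tau(B)_-)\cdot T$ together with irreducibility of $V$, the latter since $V\in\mathcal C_{fin}$), but one should note that $T$ itself need not be finite dimensional, so one invokes Dixmier's version of Schur's lemma (a module endomorphism of a countable-dimensional irreducible module over $\mathbb C$ is scalar), or, more elementarily, restricts $k_i\otimes b$ to a single $T_{\underline m}$, shows it is scalar there by the usual finite-dimensional Schur argument, and then checks via the grading-shift operators $t^{\underline n}k_0\otimes 1$ that this scalar is independent of $\underline m$ — exactly as in Lemma 4.4. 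Either route gives the well-defined linear map $\psi_i:B\to\mathbb C$ with $\psi_i(1)=0$.
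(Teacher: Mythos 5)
Your argument is correct and is essentially the paper's: the paper states this lemma without a separate proof, but its proof of the analogous statement for $k_0\otimes b$ (Lemma 4.4) is exactly your centrality-plus-Schur argument, with $\psi_i(1)=c_i=0$ coming from the twist of Lemma 3.3. Your care about the infinite-dimensionality of $T$ (Dixmier, or an eigenvector in the finite-dimensional piece $T_{\underline 0}$ whose eigenspace is a nonzero submodule) is a reasonable way to make the Schur step precise.
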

 Let 
\begin{equation*} t^{\underline m}d_{a}\otimes 1\cdot(v_{i}(\underline n))_{i=1}^{n}=(v_{i}(\underline m + \underline n))_{i=1}^{n}\textbf{A}_{\underline m, \underline n}^{(a)},\: \forall \: \underline m, \underline n \in \mathbb{Z}^{\nu}, \: 1\leq a \leq \nu.
\end{equation*}
Then as $[t^{\underline m}d_{a}\otimes 1, t^{\underline n}k_{0}\otimes 1 ]=n_{a}t^{\underline m + \underline n}k_{0}$, therefore we have
\begin{equation}\label{4.2}  \textbf A_{\underline m, \underline n}^{(a)}= \textbf B_{\underline m, \underline n}\textbf A_{\underline m, \underline 0}^{(a)} + n_{a}\textbf I. 
\end{equation}
\begin{lemm}
If $ b \in ker{\psi}$, then $ t^{ \underline m}k_0\otimes b $ is locally nilpotent on $T$ for every $\underline{m}\in \mathbb{Z}^{\nu}$.\end{lemm}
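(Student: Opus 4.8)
The plan is to split the proof according to whether $\underline m=\underline 0$ or $\underline m\neq\underline 0$. When $\underline m=\underline 0$ there is nothing to prove: $t^{\underline 0}k_0\otimes b=k_0\otimes b$ acts on $T$ as the scalar $c_0\psi(b)$ by Lemma 4.4, and $c_0\psi(b)=0$ because $b\in\ker\psi$. So assume from now on that $\underline m\neq\underline 0$, fix an $a\in\{1,\dots,\nu\}$ with $m_a\neq 0$, and set $Y=t^{\underline m}k_0\otimes b$. I would then (i) reduce the assertion to the statement that the matrix $\mathbf B_{\underline m,b}$ appearing in \eqref{4.1} is nilpotent, and (ii) prove that statement by comparing diagonals in a well-chosen triangularizing basis of $T_{\underline 0}$.

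For (i): since $\mathcal K$ is the center of $\mathcal{\tilde L}(\mathfrak{\dot g})$, its elements commute with one another, so $[\,t^{\underline m}k_0\otimes b,\ t^{\underline n}k_0\otimes 1\,]=0$; hence $Y$ commutes on $T$ with each $\tfrac1{c_0}t^{\underline n}k_0\otimes 1$, and the latter is invertible on $T$ by Lemma 4.2. Thus every homogeneous $v\in T_{\underline n}$ can be written $v=\tfrac1{c_0}t^{\underline n}k_0\otimes 1\cdot v'$ with $v'\in T_{\underline 0}$, whence $Y^{N}v=\tfrac1{c_0}t^{\underline n}k_0\otimes 1\cdot Y^{N}v'$; as $T=\bigoplus_{\underline n}T_{\underline n}$ and $Y$ shifts $\mathbb Z^{\nu}$-degrees by $\underline m$, it is enough to produce an $N$ with $Y^{N}T_{\underline 0}=0$. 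But iterating \eqref{4.1}, and using that all the matrices $\mathbf B_{\underline r,\underline s}$ and $\mathbf B_{\underline r,b'}$ pairwise commute (again because $\mathcal K$ is central in $\mathcal{\tilde L}(\mathfrak{\dot g})$), one sees that $Y^{k}$ acts on $T_{\underline 0}$ as $c_0^{k}$ times a product of $\mathbf B$-matrices having $\mathbf B_{\underline m,b}^{k}$ as a factor; so $\mathbf B_{\underline m,b}^{n}=0$, where $n=\dim T_{\underline 0}$, would give $Y^{n}T_{\underline 0}=0$ and finish (i).

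For (ii): I would choose a basis $\{v_1,\dots,v_n\}$ of $T_{\underline 0}$ making the commuting family $\{\mathbf B_{\underline r,\underline s},\ \mathbf B_{\underline r,b'}\}$ simultaneously upper triangular (by Lemma 4.3 the $\mathbf B_{\underline r,\underline s}$ are then unipotent, and $\mathbf B_{\underline r,1}=\mathbf I$), and — along the lines of [4] and [5] — arrange in addition that the derivation matrix $\mathbf A^{(a)}_{-\underline m,\underline 0}$ is upper triangular in the same basis. The key input is the bracket relation $[\,t^{-\underline m}d_a\otimes 1,\ t^{\underline m}k_0\otimes b\,]=m_a\,(k_0\otimes b)$ in $\tau(B)$, whose right-hand side acts as $m_a c_0\psi(b)=0$ on $T$ by Lemma 4.4; hence $t^{-\underline m}d_a\otimes 1$ and $t^{\underline m}k_0\otimes b$ commute as operators on $T$. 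Restricting this commutation to $T_{\underline 0}$, rewriting both sides through \eqref{4.1} and the defining equation for the matrices $\mathbf A^{(a)}$, and using \eqref{4.2} to eliminate $\mathbf A^{(a)}_{-\underline m,\underline m}$, one is led to an identity of the shape $\mathbf B_{-\underline m,\underline m}\,[\mathbf A^{(a)}_{-\underline m,\underline 0},\ \mathbf B_{\underline m,b}]=-m_a\,\mathbf B_{\underline m,b}$. Its left-hand side is strictly upper triangular (a commutator of two upper-triangular matrices is, and $\mathbf B_{-\underline m,\underline m}$ is unipotent), so matching diagonal entries gives $(\mathbf B_{\underline m,b})_{ii}=0$ for all $i$; that is, $\mathbf B_{\underline m,b}$ is strictly upper triangular, hence $\mathbf B_{\underline m,b}^{n}=0$.

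The step I expect to be the main obstacle is the simultaneous triangularization in (ii), specifically the compatibility of the triangularizing basis with $\mathbf A^{(a)}_{-\underline m,\underline 0}$: the operators coming from $\mathcal D\otimes B$ do not commute with the $\mathbf B$'s, so one must check that $\mathbf A^{(a)}_{-\underline m,\underline 0}$ together with the $\mathbf B$'s spans a solvable Lie subalgebra of $\operatorname{End}(T_{\underline 0})$ (so that Lie's theorem produces the flag), which uses the bracket relations of $\tau(B)_0$ and the $\tau(B)_0$-irreducibility of $T$ as in [4] and [5]. An equivalent way to organize the argument runs through Lemma 4.1: when $b\in\ker\psi$ one checks that $(\operatorname{ad}Y)^{2}=0$ on $\tau(B)_0$, so the set of vectors that $Y$ annihilates locally nilpotently is a $\tau(B)_0$-submodule, and by irreducibility it suffices to exhibit a single nonzero $w\in T$ with $Yw=0$ — which is precisely equivalent to the vanishing of the diagonal of $\mathbf B_{\underline m,b}$.
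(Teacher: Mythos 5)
Your reduction in step (i) is sound, and the engine of your step (ii) — the commutator $[\,t^{-\underline{m}}d_a\otimes 1,\ t^{\underline{m}}k_0\otimes b\,]=m_a\,k_0\otimes b$, which kills $T$ because $b\in\ker\psi$, combined with \eqref{4.1} and \eqref{4.2} — is exactly the engine of the paper's proof; the matrix identity you arrive at is the same one the paper derives. The gap is precisely where you say it is. Your route to nilpotency of $\mathbf{B}_{\underline{m},b}$ needs $\mathbf{A}^{(a)}_{-\underline{m},\underline{0}}$ to be upper triangular in the same basis that triangularizes the commuting $\mathbf{B}$'s, and nothing you have established produces such a basis: by your own identity, $[\mathbf{B}_{\underline{m},b},\mathbf{A}^{(a)}_{-\underline{m},\underline{0}}]$ lands in the associative algebra generated by the $\mathbf{B}$'s (it involves $\mathbf{B}_{-\underline{m},\underline{m}}^{-1}\mathbf{B}_{\underline{m},b}$), not in their linear span, so Lie's theorem does not apply off the shelf and the claimed solvability would itself require the very commutator bookkeeping you are trying to avoid. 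Your closing alternative does not close the gap either: via Lemma 4.1 it suffices to produce one nonzero $w$ with $Yw=0$, i.e.\ one zero eigenvalue of $\mathbf{B}_{\underline{m},b}$, but the only way you propose to get even that is the same unjustified triangularization.

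The paper turns the argument around so that no triangularization involving $\mathbf{A}^{(a)}_{-\underline{m},\underline{0}}$ is ever needed: assume $Y=t^{\underline{m}}k_0\otimes b$ is \emph{not} locally nilpotent; then by Lemmas 4.1 and 4.2 the matrix $\mathbf{B}_{\underline{m},b}$ is invertible, and the identity becomes the similarity $\mathbf{B}_{\underline{m},b}^{-1}\bigl(m_a\mathbf{I}+\mathbf{B}_{-\underline{m},\underline{m}}\mathbf{A}^{(a)}_{-\underline{m},\underline{0}}\bigr)\mathbf{B}_{\underline{m},b}=\mathbf{B}_{-\underline{m},\underline{m}}\mathbf{A}^{(a)}_{-\underline{m},\underline{0}}$, which is absurd because a matrix cannot be similar to itself plus $m_a\mathbf{I}$ with $m_a\neq 0$ (compare traces). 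If you prefer to keep your direct route, the correct repair is a trace computation rather than a triangularization: writing $Q=\mathbf{B}_{\underline{m},b}$, $P=\mathbf{B}_{-\underline{m},\underline{m}}$, $R=\mathbf{A}^{(a)}_{-\underline{m},\underline{0}}$, your identity $m_aQ=P[Q,R]$ gives $m_a\,\mathrm{tr}(P^{-1}Q^{j})=\mathrm{tr}\bigl(Q^{j-1}[Q,R]\bigr)=0$ for all $j\geq 1$; since $P$ is unipotent and commutes with $Q$, this forces all power sums of the eigenvalues of $Q$ to vanish, so $Q$ is nilpotent by Newton's identities. Either fix uses only the commuting family $\{P,Q\}$ and avoids the obstacle entirely.
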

\begin{proof}
For $\underline m \in \mathbb{Z}^{\nu}$, we can assume that $m_{a}\neq 0 $ for some $ a\in \{1,2,...,\nu\}$. Suppose, for the sake of contradiction, that the result is not true, then by Lemmas 4.1 and 4.2, it follows that $t^{\underline m}k_{0}\otimes b$ is an invertible operator, and \eqref{4.1} implies that $\textbf B_{\underline m, b}$ is an invertible matrix and that
\begin{equation}
	[t^{-\underline{m}}d_{a}\otimes 1, t^{\underline{m}}k_0\otimes b]\notag=0,
\end{equation}
which gives
\begin{equation*}
	t^{-\underline{m}}d_{a}\otimes 1\cdot t^{\underline m}k_{0}\otimes b\cdot(v_{i})_{i=1}^{n}= t^{\underline m}k_{0}\otimes b\cdot t^{-\underline{m}}d_{a}\otimes 1\cdot(v_{i})_{i=1}^{n},
\end{equation*}
and therefore 

\begin{equation*}
	\textbf{A}_{-\underline m, \underline m}^{(a)}\textbf{B}_{\underline m, b} = \textbf{B}_{-\underline m, \underline m}\textbf{B}_{\underline m, b}\textbf{A}_{-\underline m, 0}^{(a)},
\end{equation*}
using \eqref{4.2}, we have 
\begin{equation*}
	(m_{a}\textbf{I}+ \textbf{B}_{-\underline m,\underline m}\textbf{A}_{-\underline m, \underline 0}^{(a)})\textbf{B}_{\underline m,b} = \textbf{B}_{\underline m,b}\textbf B_{-\underline m, \underline m}\textbf{A}_{-\underline m, \underline 0}^{(a)},
\end{equation*}
and hence
\begin{equation*}
	\textbf{B}_{\underline m, b}^{-1}(m_{a}\textbf{I}+\textbf{B}_{-\underline{m},\underline{m}}\textbf A_{-\underline m, \underline 0}^{(a)})\textbf{B}_{\underline m,b}=\textbf B_{-\underline m,\underline m}\textbf A_{-\underline m, 0}^{(a)},
\end{equation*}
which gives a contradiction. Hence, $t^{\underline m}k_{0}\otimes b$ is locally nilpotent for every $\underline m \in \mathbb{Z}^{\nu}, \: b\in ker\psi $, and therefore all the matrices $B_{\underline m, b}$ are strictly upper triangular.  \end{proof}
We will now prove a similar lemma for the operators $t^{\underline{m}}k_{i}\otimes b $  $    (1\leq i \leq \nu )$. Before that we need some notation as before.
\begin{equation*}
t^{\underline m}k_{p}\otimes b\cdot(v_{i})_{i=1}^{n}=(v_{i}(\underline{m}))_{i=1}^{n}C_{\underline{m},b}^{(p)},
\end{equation*}
Then
\begin{equation*} t^{\underline m}k_{p}\otimes b\cdot(v_{i}(\underline r))_{i=1}^{n} =\frac{1}{c_{0}}t^{\underline r}k_{0}\otimes1\cdot(v_{i}(\underline m ))_{i=1}^{n}C_{\underline{m},b}^{(p)}.
\end{equation*}
Therefore
\begin{align*}
    \begin{split}
        t^{\underline m}k_{p}\otimes b_{1}\cdot t^{\underline n}k_{q}\otimes b_{2}\cdot (v_{i})_{i=1}^{n}&= t^{\underline m}k_{p}\otimes b_{1}\cdot(v_{i}(\underline n))_{i=1}^{n}\textbf{C}_{\underline n, b_{2}}^{(q)}, \\
        &=t^{\underline m}k_{p}\otimes b_{1}\cdot \frac{1}{c_{0}}t^{\underline n}k_{0}\otimes 1(v_{i})_{i=1}^{n}C_{\underline n , b_{2}}^{(q)}, \\
        &= \frac{1}{c_{0}}t^{\underline m}k_{0}\otimes 1\cdot \frac{1}{c_{0}}t^{\underline n}k_{0}\otimes 1\cdot (v_{i})_{i=1}^{n}\textbf{C}_{\underline m,b_{1}}^{(p)}\textbf{C}_{\underline n, b_{2}}^{(q)}.
    \end{split}
\end{align*}
Hence
\begin{equation*}
    \textbf{C}_{\underline{m},b_{1}}^{(p)}C_{\underline{n},b_{2}}^{(q)}=C_{\underline{n},b_{2}}^{(q)} \textbf{C}_{\underline{m},b_{1}}^{(p)}.
\end{equation*}
Therefore the Lie algebra $\mathcal{N}$ spanned by the set $\{\textbf{C}_{\underline{m},b}^{(p)} \mid \underline{m}\in \mathbb{Z}^{\nu},\: b\in B \}$ is an abelian subalgebra of $gl_{\nu}$.
\begin{lemm} Each $t^{\underline{m}}k_{i}\otimes b $ is locally nilpotent on $T$, for all $b\in\bigcap_{j=1}^{\nu} \textnormal{ker}\psi_{j}, \:\underline{m}\in \mathbb{Z}^{\nu}, \: 1\leq i \leq \nu$. 
	
\end{lemm}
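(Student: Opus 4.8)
The plan is to follow the template of the proof of Lemma 4.6, with $t^{\underline{m}}k_{0}\otimes b$ replaced by $t^{\underline{m}}k_{i}\otimes b$ and the single map $\psi$ replaced by the family $\psi_{1},\dots,\psi_{\nu}$ of Lemma 4.5. The case $\underline{m}=\underline{0}$ is immediate, since then $t^{\underline{0}}k_{i}\otimes b=k_{i}\otimes b$ acts on $T$ as the scalar $\psi_{i}(b)=0$; so one assumes $\underline{m}\neq\underline{0}$ and fixes $a\in\{1,\dots,\nu\}$ with $m_{a}\neq 0$. Using the bracket of $\tau$ one computes
\[
[t^{-\underline{m}}d_{a}\otimes 1,\ t^{\underline{m}}k_{i}\otimes b]=m_{a}\,k_{i}\otimes b-\delta_{ai}\sum_{p=1}^{\nu}m_{p}\,k_{p}\otimes b,
\]
and the key observation is that every summand on the right has the form $k_{j}\otimes b$ with $1\leq j\leq\nu$ (in particular $k_{0}\otimes b$ never occurs), so it annihilates $T$ as soon as $b\in\bigcap_{j=1}^{\nu}\ker\psi_{j}$; thus $t^{-\underline{m}}d_{a}\otimes 1$ and $t^{\underline{m}}k_{i}\otimes b$ commute as operators on $T$, the analogue of the relation $[t^{-\underline{m}}d_{a}\otimes 1,\,t^{\underline{m}}k_{0}\otimes b]=0$ used in Lemma 4.6.

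Next I would argue by contradiction. If $t^{\underline{m}}k_{i}\otimes b$ is not locally nilpotent on $T$, then by Lemma 4.1 it has no nonzero kernel on $T$, and since $\dim T_{\underline{n}}=n$ for every $\underline{n}$ (Lemma 4.2) it is invertible on $T$; in particular the matrix $\textbf{C}^{(i)}_{\underline{m},b}$ of $t^{\underline{m}}k_{i}\otimes b\colon T_{\underline{0}}\to T_{\underline{m}}$ in the bases $(v_{j})$, $(v_{j}(\underline{m}))$ is invertible. Evaluating $[t^{-\underline{m}}d_{a}\otimes 1,\,t^{\underline{m}}k_{i}\otimes b]\cdot(v_{j})_{j=1}^{n}=0$ and expressing each factor through the matrices $\textbf{A}^{(a)}_{-\underline{m},\underline{m}}$, $\textbf{A}^{(a)}_{-\underline{m},\underline{0}}$, $\textbf{B}_{-\underline{m},\underline{m}}$, $\textbf{C}^{(i)}_{\underline{m},b}$ exactly as in the proof of Lemma 4.6 should give
\[
\textbf{A}^{(a)}_{-\underline{m},\underline{m}}\,\textbf{C}^{(i)}_{\underline{m},b}=\textbf{B}_{-\underline{m},\underline{m}}\,\textbf{C}^{(i)}_{\underline{m},b}\,\textbf{A}^{(a)}_{-\underline{m},\underline{0}}.
\]
Substituting \eqref{4.2} (with $\underline{n}=\underline{m}$) for $\textbf{A}^{(a)}_{-\underline{m},\underline{m}}$ and using invertibility of $\textbf{C}^{(i)}_{\underline{m},b}$ then yields
\[
\bigl(\textbf{C}^{(i)}_{\underline{m},b}\bigr)^{-1}\bigl(m_{a}\textbf{I}+\textbf{B}_{-\underline{m},\underline{m}}\textbf{A}^{(a)}_{-\underline{m},\underline{0}}\bigr)\textbf{C}^{(i)}_{\underline{m},b}=\textbf{B}_{-\underline{m},\underline{m}}\textbf{A}^{(a)}_{-\underline{m},\underline{0}},
\]
so $m_{a}\textbf{I}+\textbf{B}_{-\underline{m},\underline{m}}\textbf{A}^{(a)}_{-\underline{m},\underline{0}}$ is conjugate to $\textbf{B}_{-\underline{m},\underline{m}}\textbf{A}^{(a)}_{-\underline{m},\underline{0}}$ and hence has the same trace; since $m_{a}\neq 0$ and $n\geq 1$, this forces $nm_{a}=0$, a contradiction. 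Hence $t^{\underline{m}}k_{i}\otimes b$ is locally nilpotent on $T$, and then, as at the end of the proof of Lemma 4.6, every $\textbf{C}^{(i)}_{\underline{m},b}$ is strictly upper triangular.

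The one point not parallel to Lemma 4.6, and which I expect to be the main obstacle, is the rearrangement $\textbf{B}_{-\underline{m},\underline{m}}\,\textbf{C}^{(i)}_{\underline{m},b}\,\textbf{A}^{(a)}_{-\underline{m},\underline{0}}=\textbf{C}^{(i)}_{\underline{m},b}\,\textbf{B}_{-\underline{m},\underline{m}}\,\textbf{A}^{(a)}_{-\underline{m},\underline{0}}$ that makes the conjugation relation appear: it requires $\textbf{C}^{(i)}_{\underline{m},b}$ to commute with $\textbf{B}_{-\underline{m},\underline{m}}$. In the $k_{0}$-case this was free because all the $\textbf{B}$-matrices commute, whereas here we only know that the $\textbf{C}^{(p)}_{\underline{m},b}$ commute among themselves. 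I plan to obtain this commutation from the fact that the operators $t^{-\underline{m}}k_{0}\otimes 1,\ t^{\underline{m}}k_{i}\otimes b,\ t^{\underline{m}}k_{0}\otimes 1$ pairwise commute on $T$ (they lie in the abelian subalgebra $\mathcal{K}\otimes B$): comparing the two factorizations of the resulting operator $T_{\underline{m}}\to T_{\underline{m}}$ built from $t^{-\underline{m}}k_{0}\otimes 1$ and $t^{\underline{m}}k_{i}\otimes b$ — through $T_{2\underline{m}}$, respectively through $T_{\underline{0}}$ — gives $\textbf{B}_{-\underline{m},2\underline{m}}\textbf{B}_{\underline{m},\underline{m}}\textbf{C}^{(i)}_{\underline{m},b}=\textbf{C}^{(i)}_{\underline{m},b}\textbf{B}_{-\underline{m},\underline{m}}$, while the same comparison for $t^{-\underline{m}}k_{0}\otimes 1$ and $t^{\underline{m}}k_{0}\otimes 1$ (using $\textbf{B}_{\underline{m},\underline{0}}=\textbf{I}$) gives $\textbf{B}_{-\underline{m},2\underline{m}}\textbf{B}_{\underline{m},\underline{m}}=\textbf{B}_{-\underline{m},\underline{m}}$; combining the two identities gives $\textbf{B}_{-\underline{m},\underline{m}}\textbf{C}^{(i)}_{\underline{m},b}=\textbf{C}^{(i)}_{\underline{m},b}\textbf{B}_{-\underline{m},\underline{m}}$. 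Everything else — the bracket computation, the use of Lemmas 4.1 and 4.2, the matrix bookkeeping, and the trace argument — transcribes the $k_{0}$-case verbatim.
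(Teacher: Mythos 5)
Your proposal is correct and follows essentially the same route as the paper, which likewise reduces to the Lemma 4.6 argument and arrives at the conjugation relation $(\textbf{C}_{\underline{m},b}^{(p)})^{-1}(\textbf{A}_{-\underline{m},\underline{0}}^{(a)}+ m_{a}\textbf{B}_{-\underline{m},\underline{m}}^{-1})\textbf{C}_{\underline{m},b}^{(p)}= \textbf{A}_{-\underline{m},\underline{0}}^{(a)}$, contradicted by a trace count because $\textbf{B}_{-\underline{m},\underline{m}}$ is unipotent. The one extra step you labor over, namely $\textbf{B}_{-\underline{m},\underline{m}}\textbf{C}^{(i)}_{\underline{m},b}=\textbf{C}^{(i)}_{\underline{m},b}\textbf{B}_{-\underline{m},\underline{m}}$, is in fact avoidable: left-multiplying your identity $(\textbf{B}_{-\underline{m},\underline{m}}\textbf{A}^{(a)}_{-\underline{m},\underline{0}}+m_{a}\textbf{I})\textbf{C}^{(i)}_{\underline{m},b}=\textbf{B}_{-\underline{m},\underline{m}}\textbf{C}^{(i)}_{\underline{m},b}\textbf{A}^{(a)}_{-\underline{m},\underline{0}}$ by $\textbf{B}_{-\underline{m},\underline{m}}^{-1}$ yields the paper's relation directly, after which the same trace argument applies.
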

\begin{proof} For $ b\in \bigcap_{j=1}^{\nu}\text{ker}\psi_{j}$ and $\underline{m}\in \mathbb{Z}^{\nu}$ with $m_{a}\neq 0$ for some $a\in \{1,2,...,\nu\}$.  If $ t^{\underline{m}}k_{p}\otimes b $ is not locally nilpotent, then proceeding in a manner similar to Lemma 4.6, we get
\begin{equation*}
    (\textbf{C}_{\underline{m},b}^{(p)})^{-1}(\textbf{A}_{-\underline{m},\underline{0}}^{(a)}+ m_{a}\textbf{B}_{-\underline{m},\underline{m}}^{-1})\textbf{C}_{\underline{m},b}^{(p)}= \textbf{A}_{-\underline{m},\underline{0}}^{(a)},
\end{equation*}
which leads to a contradiction, and hence the result follows.
\end{proof}
By Lemma 4.7, for each $ i \in \{1,2,...,\nu \} $ and each $b\in \bigcap_{j=1}^{n}\text{ker}\psi_{j}$,  there is a positive integer $N\equiv N(i,b,\underline{r})$ such that $(t^{\underline m}k_{i}\otimes b)^{N}(v_{i})_{i=1}^{n}=0$. Consequently, $(t^{\underline m}k_{i}\otimes b)^{N}\cdot T_{\underline 0}=0$, and hence $(t^{\underline m}k_{i}\otimes b)^{N}\cdot T=0$. Now we prove the following important lemma. 
\begin{lemm}
$t^{\underline m}k_{i}\otimes bb'(1\leq i \leq \nu)$ is locally nilpotent for every $\underline m \in \mathbb{Z}^{\nu}$, for every $b\in \bigcap_{j=1}^{\nu}\textnormal{ker} \psi_{j} $ and $b'\in B$.\end{lemm}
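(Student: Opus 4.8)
The plan is to bootstrap from Lemma 4.7, which already handles the case $b \in \bigcap_{j=1}^{\nu}\ker\psi_j$, to the case of a product $bb'$ with $b$ in that intersection and $b'$ arbitrary. The key structural fact is that the operators $\textbf C^{(p)}_{\underline m, b}$ for $b \in B$ all commute with each other and with the $\textbf B_{\underline m, \underline n}$ (established just before Lemma 4.7), and that the matrices $\textbf B_{\underline m, \underline n}$ and $\textbf C^{(p)}_{\underline m, b}$ associated to $b \in \bigcap \ker\psi_j$ are already known to be strictly upper triangular (Lemmas 4.3, 4.7). So the idea is to compute the commutator $[t^{-\underline m}d_a \otimes b',\, t^{\underline m}k_i \otimes b]$ in $\tau(B)$, evaluate it on the basis vectors $(v_i)_{i=1}^n$, and extract a matrix identity of the same flavour as in Lemmas 4.6 and 4.7: if $t^{\underline m}k_i \otimes bb'$ were \emph{not} locally nilpotent, then (by Lemmas 4.1, 4.2, 4.7) it would be an invertible operator and the associated matrix $\textbf C^{(i)}_{\underline m, bb'}$ would be invertible, and conjugation by it would carry $\textbf A^{(a)}_{-\underline m, \underline 0} + m_a(\cdots)$ to $\textbf A^{(a)}_{-\underline m, \underline 0}$, forcing the trace of a strictly-upper-triangular correction term to vanish — a contradiction since $m_a \neq 0$.

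Concretely, first I would fix $\underline m \in \mathbb Z^{\nu}$ with $m_a \neq 0$ for some $a \in \{1,\dots,\nu\}$ (the case $\underline m = \underline 0$ is trivial since $k_i \otimes bb' = \psi_i(bb')$ is already controlled, or handled separately as a scalar). Then I would work out the bracket
\[
[t^{-\underline m}d_a \otimes b',\, t^{\underline m}k_i \otimes b] = m_a\, k_i \otimes bb' + \delta_{ai}\sum_{p=0}^{\nu}(-m_p)\, k_p \otimes bb',
\]
using the bracket relations of $\tau$ listed in Section 2 together with the tensor rule $[x_1 \otimes b_1, x_2 \otimes b_2] = [x_1,x_2]\otimes b_1 b_2$. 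Since $b \in \bigcap_j \ker\psi_j$ we have $k_p \otimes bb'$ acting by $\psi_p(bb')$-type scalars that vanish on $T$ — more precisely, one must be slightly careful here because $\psi_p(bb')$ need not be zero; but the $k_p$ with $p \geq 1$ act as the \emph{scalar} $\psi_p(bb')$ on $T$ while $k_0 \otimes bb'$ acts as $c_0\psi(bb')$, and these scalar operators are certainly locally nilpotent only if the scalar is zero. This is exactly the subtlety to be resolved: I expect that the right move is to first reduce to showing $\psi_i(bb') = 0$ for $b \in \bigcap_j \ker\psi_j$, $b' \in B$, which should follow from an independent commutator computation (bracketing $t^{\underline m}k_i \otimes b$ against $t^{-\underline m}d_i \otimes b'$ and comparing traces, exploiting strict upper-triangularity of $\textbf C^{(i)}_{\underline m, b}$), and then the local nilpotency of $t^{\underline m}k_i \otimes bb'$ as an operator on $T$ follows by the same conjugation argument as in Lemma 4.7 applied to the element $bb'$ in place of $b$.

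The main obstacle I anticipate is precisely the interplay between the scalar action of the central elements $k_p \otimes c$ on $T$ and the local-nilpotency claim: one cannot simply quote Lemma 4.7 for $bb'$ because $bb'$ need not lie in $\bigcap_j \ker\psi_j$. The resolution should be that the trace identity coming from the commutator $[t^{-\underline m}d_a \otimes 1,\, t^{\underline m}k_i \otimes bb']$ forces $m_a \cdot (\text{something involving } \psi(bb'), \psi_p(bb'))$ to equal the trace of a strictly upper triangular matrix, hence zero; since $m_a \neq 0$ this pins down the scalars $\psi(bb')$ and $\psi_p(bb')$ to be zero, placing $bb'$ back inside $\bigcap_j \ker\psi_j$ (at least as far as its action on $T$ is concerned) and letting the argument of Lemma 4.7 go through verbatim. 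I would organize the write-up as: (i) the bracket computation; (ii) the trace argument forcing the relevant scalars to vanish; (iii) invoking Lemma 4.7's conjugation contradiction to conclude local nilpotency; (iv) noting as in the paragraph after Lemma 4.7 that a uniform power $N$ then annihilates all of $T_{\underline 0}$, hence all of $T$.
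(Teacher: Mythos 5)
Your proposal correctly identifies the central difficulty --- that $bb'$ need not lie in $\bigcap_{j}\ker\psi_{j}$, so Lemma 4.7 cannot be quoted verbatim --- but the mechanism you propose to resolve it is not justified, and it is not the paper's mechanism. The paper's proof is a Leibniz-rule bootstrap: take $N$ minimal with $(t^{\underline r}k_{i}\otimes b)^{N}\cdot T=0$ (available by Lemma 4.7), apply the derivation $t^{\underline 0}d_{a}\otimes b'$ to the vanishing vector $(t^{\underline r}k_{i}\otimes b)^{N}\cdot v$, and observe that everything except $Nr_{a}\,t^{\underline r}k_{i}\otimes bb'\cdot (t^{\underline r}k_{i}\otimes b)^{N-1}\cdot v$ vanishes; minimality of $N$ produces a nonzero $w$ with $t^{\underline r}k_{i}\otimes bb'\cdot w=0$, and Lemma 4.1 then upgrades this to local nilpotency. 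That single computation is the entire content of the lemma (plus separate treatment of $\underline r=\underline 0$ and of $\nu=1$), and it appears nowhere in your plan.

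Two concrete gaps in your route. First, you dismiss the case $\underline m=\underline 0$ as ``trivial \dots already controlled,'' but it is precisely the hard case: $k_{i}\otimes bb'$ acts as the scalar $\psi_{i}(bb')$, a scalar operator is locally nilpotent only if it is zero, and $\psi_{i}(bb')=0$ amounts to saying that $\bigcap_{j}\ker\psi_{j}$ absorbs multiplication by $B$ --- essentially the content of the lemma (the paper deduces $\psi_{i}\equiv 0$ immediately afterwards). Second, your step (ii), the ``trace identity'' meant to force $\psi_{i}(bb')=0$, is asserted rather than proved, and there is a genuine obstruction: $t^{-\underline m}d_{a}\otimes b'$ and $t^{\underline m}k_{i}\otimes b$ shift the $\mathbb{Z}^{\nu}$-grading in opposite directions, so the restriction of their commutator to $T_{\underline 0}$ is a difference of products of matrices taken between \emph{different} graded pieces, and its trace is not automatically zero (such anomalous traces are exactly how the cocycles $\phi_{1},\phi_{2}$ arise in the first place). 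The conjugation-and-trace contradiction of Lemmas 4.6--4.7 works there only because the relevant commutator is genuinely the zero operator on $T$, which is exactly what you cannot assume here. Once $\psi_{p}(bb')=0$ is actually established your step (iii) would go through, but the paper's Leibniz trick both establishes it and finishes the proof in one stroke; I would rewrite the argument around that computation.
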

\begin{proof} Consider $\underline{r}$ such that $r_{a}\neq0$ for some $a\in\{1,2,...,\nu \}$, then by Lemma 4.7, $t^{\underline r}k_{i}\otimes b$ is locally nilpotent for every $\underline r \in \mathbb{Z}$. For fixed $i\in \{1,2,...,\nu\}, \: b\in \bigcap_{j=1}^{\nu}\textnormal{ker} \psi_{j}$, if $N\equiv N(i,b,\underline{r})$ is the least positive integer such that $(t^{\underline{r}}k_{i}\otimes b)^{N}\cdot T=0$, then for $b' \in B$ and $v\in T$,  we consider 
	\begin{equation*}
        		t^{\underline{0}}d_{a}\otimes b'\cdot(t^{\underline r}k_{i}\otimes b)^{N}\cdot v=N r_{a}t^{\underline r}k_{i}\otimes bb'\cdot (t^{\underline r}k_{i}\otimes b)^{N-1}\cdot v+ (t^{\underline r}k_{i}\otimes b)^{N}t^{\underline 0}d_{a}\otimes b'\cdot v,
	\end{equation*}
which gives  $0\neq w \in T$ such that
\begin{equation*}
	t^{\underline{r}}k_{i}\otimes bb'\cdot w=0,
\end{equation*} and the result follows by Lemma 4.1. For the case $\underline{r}=\underline{0}$, we choose $\underline{s}\in \mathbb{Z}^{\nu}, \: a,i\in \{1,2,...,\nu\}$ $(\text{for}\; \nu\geq 2)$, such that $s_{i}\neq 0$ and $a\neq i$, then for $v\in T$
\begin{equation*}
t^{-\underline{s}}d_{a}\otimes b'\cdot(t^{\underline s}k_{i}\otimes b)^{N}\cdot v= Ns_{a}k_{i}\otimes bb'\cdot(t^{\underline{s}}k_{i}\otimes b)^{N-1}\cdot v + (t^{\underline{s}}k_{i}\otimes b)^{N}\cdot t^{-\underline{s}}d_{a}\otimes b'\cdot v,
\end{equation*}
and for $\nu=1$, if $\underline{r}\neq 0$, the operators $t_{1}^{\underline{r}}k_{1}\otimes b $ acts as zero on $T$ this follows from the fact that for $\textbf{m}=(m_{0},m_{1}) \in \mathbb{Z}^{2}$.
\begin{equation*}
    m_{0}t^{\textbf{m}}k_{0}\otimes b + m_{1}t^{\textbf{m}}k_{1}\otimes b =0
\end{equation*}
\\ For $\underline{r}=0 $, consider for $v\in T $ 
\begin{equation*}
    [t_{1}^{1}d_{0}\otimes b', t_1^{-1}k_{0}\otimes b]\cdot v= k_{1}\otimes bb'\cdot v.
\end{equation*}
For $b\in \text{ker}\psi$ , we deduce that
\begin{equation*}
    t_{1}d_{0}\otimes1\cdot(t_{0}^{-1}k_{0}\otimes b)^{N}\cdot v= Nk_{1}\otimes b\cdot(t_{0}^{-1}k_{0}\otimes b)^{N-1}\cdot v +(t_{0}^{-1}k_{0}\otimes b)^{N}t_{1}d_{0}\otimes 1\cdot v.
\end{equation*}
We thus obtain a nonzero vector $w\in T$, such that $k_{1}\otimes b\cdot w=0$. This gives that $\psi_{1}(b)=0$ and therefore $\text{ker}\psi \subset \text{ker}\psi_{1}$. Furthermore, $1\neq \text{ker}\psi$, $\psi_{1}(1)=0$ and the fact that $\text{ker}\psi$ is a subspace of $ B$ of codimension 1, it follows that $\psi_{1}\equiv 0$ on $T$.
\end{proof}
By the above Lemma, $k_{i}\otimes bb^{'}$ is locally nilpotent on $T$, and so $\psi_{i}(bb^{'})=0, 1\leq i\leq \nu$. Since $\psi_{i}=0$ for all $1\leq i \leq \nu$, it follows that $\bigcap_{i=1}^{\nu}\text{ker}\psi_{i}=B$.
\begin{lemm}For every $\underline{m}\in\mathbb{Z}^\nu,\: b\in B$, and $1\leq i\leq \nu$, the operators $t^{\underline r}k_{i}\otimes b$ act trivially on T.
\end{lemm}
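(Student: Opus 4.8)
The plan is to isolate the subspace of $T$ killed by all the central-type operators $t^{\underline m}k_{i}\otimes b$ with $1\le i\le\nu$, and to show by irreducibility of $T$ as a $\tau(B)_{0}$-module that this subspace is all of $T$. Set
\[
T'=\{\,v\in T\ \mid\ t^{\underline m}k_{i}\otimes b\cdot v=0\ \text{ for all }\underline m\in\mathbb{Z}^{\nu},\ b\in B,\ 1\le i\le\nu\,\}.
\]
The whole argument then reduces to (i) $T'\neq 0$ and (ii) $T'$ is a $\tau(B)_{0}$-submodule of $T$; since $T$ is irreducible over $\tau(B)_{0}$, these two facts force $T'=T$, which is exactly the assertion of the lemma.

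For (i): since $\psi_{1}=\cdots=\psi_{\nu}=0$ we now have $\bigcap_{j=1}^{\nu}\ker\psi_{j}=B$, so Lemma 4.7 applies to arbitrary $b\in B$ and each $t^{\underline m}k_{i}\otimes b$ ($1\le i\le\nu$) is locally nilpotent on $T$; in particular the matrix $\textbf{C}^{(i)}_{\underline m,b}$ describing its action $T_{\underline 0}\to T_{\underline m}$ in the bases $(v_{j})$ and $(v_{j}(\underline m))$ is nilpotent, and all of these matrices commute because $\mathcal{N}$ is abelian. A commuting family of nilpotent operators on the nonzero finite-dimensional space $T_{\underline 0}$ has a nonzero common kernel: the common kernels of finite subfamilies form a decreasing chain of subspaces of $T_{\underline 0}$ and hence stabilize, and the common kernel of any finite subfamily is nonzero by induction on its size, using that $\ker N$ is invariant under every operator commuting with $N$. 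Any nonzero vector of $T_{\underline 0}$ lying in this common kernel lies in $T'$, so $T'\neq 0$.

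For (ii): $\tau(B)_{0}=\tau_{0}\otimes B$ is spanned as a vector space by the elements $h\otimes t^{\underline n}\otimes b$ with $h\in\mathfrak{\dot h}$, together with $t^{\underline n}k_{a}\otimes b$ and $t^{\underline n}d_{a}\otimes b$, where $\underline n\in\mathbb{Z}^{\nu}$, $0\le a\le\nu$, $b\in B$ (note that for these elements the $t_{0}$-exponent is $0$). So it suffices to show $Xv\in T'$ for $v\in T'$ and each such $X$. For $1\le j\le\nu$, $\underline m\in\mathbb{Z}^{\nu}$, $b'\in B$ and $v\in T'$ we have $t^{\underline m}k_{j}\otimes b'\cdot (Xv)=[t^{\underline m}k_{j}\otimes b',X]\cdot v$, since $t^{\underline m}k_{j}\otimes b'\cdot v=0$. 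Now $[t^{\underline m}k_{j}\otimes b',\,h\otimes t^{\underline n}\otimes b]=0$ and $[t^{\underline m}k_{j}\otimes b',\,t^{\underline n}k_{a}\otimes b]=0$ because $\mathcal{K}$ is central in $\mathcal{\tilde L}(\mathfrak{\dot g})$ and $[\mathcal{K},\mathcal{K}]=0$, while
\[
[t^{\underline m}k_{j}\otimes b',\,t^{\underline n}d_{a}\otimes b]=-m_{a}\,t^{\underline m+\underline n}k_{j}\otimes bb'-\delta_{aj}\sum_{p=0}^{\nu}n_{p}\,t^{\underline m+\underline n}k_{p}\otimes bb'.
\]
Since $\underline n\in\mathbb{Z}^{\nu}$ the component $n_{0}$ is $0$, so every central operator on the right is of the form $t^{\underline m+\underline n}k_{p}\otimes bb'$ with $1\le p\le\nu$, and all of these annihilate $v\in T'$. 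Hence $[t^{\underline m}k_{j}\otimes b',X]\cdot v=0$, i.e. $Xv\in T'$; so $T'$ is a $\tau(B)_{0}$-submodule, and by irreducibility of $T$ together with $T'\neq 0$ we conclude $T'=T$.

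I expect the only genuinely delicate point to be the observation $n_{0}=0$ in the last bracket. It is precisely this that prevents a surviving $k_{0}$-term from appearing when one commutes $t^{\underline m}k_{j}\otimes b'$ past a derivation in $\tau(B)_{0}$ — and $k_{0}$ is the one central direction acting invertibly on $T$ (Lemma 4.2), so its presence would break the submodule computation. Everything else is routine: the bracket relations of $\tau(B)$, and the standard fact that a commuting family of nilpotent operators on a finite-dimensional space has a nonzero common kernel vector.
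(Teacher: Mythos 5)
Your proposal is correct and follows essentially the same route as the paper: both arguments produce a nonzero common kernel vector in $T_{\underline 0}$ from the commuting nilpotent matrices $\textbf{C}^{(i)}_{\underline m,b}$, show that the common annihilator of all $t^{\underline m}k_{i}\otimes b$ ($1\le i\le\nu$) is a $\tau(B)_{0}$-submodule via the bracket with $t^{\underline s}d_{a}\otimes b'$, and conclude by irreducibility of $T$. Your write-up is in fact slightly more complete than the paper's, since you also check the brackets with $h\otimes t^{\underline n}\otimes b$ and $t^{\underline n}k_{a}\otimes b$ and make explicit the key point that $n_{0}=0$ keeps a $k_{0}$-term from surviving.
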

\begin{proof} As a consequence of Lemma 4.8, each $ t^{\underline m}k_{i}\otimes b,\: 1\leq i\leq \nu,\: b\in B$ is locally nilpotent on $ T $. Let $\mathfrak{L}$ be the Lie subalgebra of $\tau(B)_{0}$ generated by $\{t^{\underline r}k_{i}\otimes b \mid \underline{r}\in \mathbb{Z}^{\nu}, 1\leq i\leq \nu,\: b\in B \}$. Then $W=\{v\in T \mid \mathfrak{L}.v=0\}$ is a $\tau(B)_{0}$-submodule of $T$, as for $v\in W$
	\begin{equation*}		t^{\underline s}d_{a}\otimes b^{'}\cdot t^{\underline r}k_{i}\otimes b\cdot v=t^{\underline r}k_{i}\otimes b\cdot t^{\underline s}d_{a}\otimes b^{'}\cdot v + r_{a}t^{\underline r+ \underline s}k_{i}\otimes bb^{'}\cdot v + \delta_{ai}\sum_{i=1}^{\nu}s_{j}t^{\underline r + \underline s}k_{j}\otimes bb^{'}\cdot v.	\end{equation*}
	
	  As all $C_{\underline m,b}^{(i)}$ are all nilpotent matrices and $\mathcal{N}$ is an abelian subalgebra of $gl_{\nu}$, therefore there exist $0\neq v\in T$ such that 
  \begin{equation*}
  t^{\underline{m}}k_{i}\otimes b\cdot v=0, \;  \; \forall b\in B, \; \forall \underline{m}\in \mathbb{Z}^{\nu}, 1\leq i \leq \nu. \end{equation*}
  Therefore W is a non-zero submodule of $T$ as $\tau(B)_{0}$-modules, thus $W=T$, which follows from the irreducibility of $T$.
\end{proof}
We now prove the following lemma concerning the operator $t^{\underline r}k_{0}\otimes b$, $b\in \text{ker}\psi$.
\begin{lemm} For $\underline{r}\in \mathbb{Z}^{\nu},\: b\in  \textnormal{ker}\psi$, $t^{\underline r}k_{0}\otimes b$ acts trivially on T.
\end{lemm}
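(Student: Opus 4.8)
The goal is to show $t^{\underline r}k_0\otimes b$ annihilates $T$ for $b\in\text{ker}\,\psi$ and every $\underline r\in\mathbb Z^{\nu}$, and the plan is to imitate the mechanism of Lemmas 4.6--4.9: exhibit a non-zero subspace of $T$ killed by these operators, show it generates a $\tau(B)_0$-submodule, and invoke the irreducibility of $T$. If $\underline r=\underline 0$ there is nothing to do, since $k_0\otimes b=c_0\psi(b)=0$ on $T$; so fix $\underline r\neq\underline 0$ and $b\in\text{ker}\,\psi$.

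First I would upgrade the local nilpotency of Lemma 4.6 to genuine nilpotency. By Lemma 4.6 each $\textbf B_{\underline m,b}$ with $b\in\text{ker}\,\psi$ is strictly upper triangular, and since $\textbf B_{\underline m,\underline r}=\textbf I+(\text{strictly upper})$ by Lemma 4.3, the matrix of $t^{\underline r}k_0\otimes b\colon T_{\underline m}\to T_{\underline m+\underline r}$ in the bases $\{v_i(\underline m)\}$, $\{v_i(\underline m+\underline r)\}$ equals $c_0\,\textbf B_{\underline m,\underline r}\textbf B_{\underline r,b}$, again strictly upper triangular; hence any composite of $n:=\dim T_{\underline 0}$ of the commuting operators $\{t^{\underline m}k_0\otimes c:\underline m\in\mathbb Z^{\nu},\ c\in\text{ker}\,\psi\}$ vanishes on every graded piece, so it is $0$ on $T$. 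Conjugating each $t^{\underline m}k_0\otimes c$ into $\text{End}(T_{\underline 0})$ by the maps $\theta_{\underline m}:=\tfrac1{c_0}t^{\underline m}k_0\otimes 1$, which are bijective on $T$ (Lemmas 4.2--4.3) and commute with $\mathcal K\otimes B$, yields for $c\in\text{ker}\,\psi$ commuting \emph{nilpotent} endomorphisms of the finite-dimensional space $T_{\underline 0}$; by Engel's theorem they have a common null vector, so
\[
W:=\{v\in T:\ t^{\underline m}k_0\otimes c\cdot v=0 \ \text{ for all }\underline m\in\mathbb Z^{\nu},\ c\in\text{ker}\,\psi\}
\]
is non-zero. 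It is $\mathbb Z^{\nu}$-graded with $W\cap T_{\underline m}=\theta_{\underline m}(W\cap T_{\underline 0})$, and each $\theta_{\underline m}$ preserves $W$.

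Next I would show $W=T$. Since $\mathcal K$ is central in $\mathcal{\tilde L}(\mathfrak{\dot g})$, both $\mathfrak{\dot h}\otimes A_{\nu}\otimes B$ and $\mathcal K\otimes B$ commute with every $t^{\underline m}k_0\otimes c$ and so preserve $W$; also $[t^{\underline s}d_0\otimes b',\,t^{\underline m}k_0\otimes c]=\sum_{p=1}^{\nu}s_p\,t^{\underline s+\underline m}k_p\otimes b'c$ acts as $0$ on $T$ by Lemma 4.9, so $t^{\underline s}d_0\otimes b'$ preserves $W$; and the key identity
\[
[\,t^{\underline s}d_a\otimes 1,\ t^{\underline m}k_0\otimes c\,]=m_a\,t^{\underline s+\underline m}k_0\otimes c\qquad(1\le a\le\nu),
\]
whose coefficient $c$ stays inside $\text{ker}\,\psi$, shows $t^{\underline s}d_a\otimes 1$ preserves $W$ as well. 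To absorb the remaining generators $t^{\underline s}d_a\otimes b'$ ($a\ge 1$, $b'$ non-scalar) I would pass to the filtration $W=W^{(0)}\subseteq W^{(1)}\subseteq\cdots$ with $W^{(j)}:=\{u:\ t^{\underline m}k_0\otimes c\cdot u\in W^{(j-1)}\ \forall\underline m,\ \forall c\in\text{ker}\,\psi\}$: writing $b'c\equiv\psi(b'c)\cdot 1 \pmod{\text{ker}\,\psi}$ in $[t^{\underline s}d_a\otimes b',\,t^{\underline m}k_0\otimes c]=m_a\,t^{\underline s+\underline m}k_0\otimes b'c$ and using that the $\theta_{\underline m}$ preserve each $W^{(j)}$, one checks $t^{\underline s}d_a\otimes b'\cdot W^{(j)}\subseteq W^{(j+1)}$. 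Hence $\bigcup_j W^{(j)}$ is a non-zero $\tau(B)_0$-submodule of $T$, so it is all of $T$ by irreducibility, and $W^{(n-1)}=T$ by the nilpotency bound of the previous step.

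The main obstacle is exactly the gap between $W^{(n-1)}=T$ and the required $W=T$: the bracket $[\mathcal D\otimes B,\ \mathcal K\otimes\text{ker}\,\psi]$ genuinely throws the coefficient $b'c$ out of $\text{ker}\,\psi$, so $W$ itself is a $\tau(B)_0$-submodule only once $\text{ker}\,\psi$ is known to be an ideal of $B$. The plan for this last step is to prove that $\psi$ is an algebra homomorphism: the set $J:=\{b\in B:\ t^{\underline m}k_0\otimes b|_T=\psi(b)\,t^{\underline m}k_0\otimes 1|_T\ \forall\underline m\}$ contains $1$, is a subalgebra on which $\psi$ is multiplicative (read off from $[t^{\underline s}d_a\otimes b_2,\,t^{\underline r}k_0\otimes b_1]=r_a\,t^{\underline s+\underline r}k_0\otimes b_1b_2$), and $\mathfrak m:=\text{ker}\,\psi\cap J$ is an ideal of $B$ with $J=\mathbb C 1\oplus\mathfrak m$; one then shows $B/\mathfrak m=\mathbb C$ (i.e. $\mathfrak m=\text{ker}\,\psi$, $J=B$) using that $B/\mathfrak m$ is finitely generated and that for $c\in\text{ker}\,\psi\setminus\mathfrak m$ the operator $t^{\underline m}k_0\otimes c$ is nilpotent yet non-zero on $T$, which, via the boxed identity (so its vanishing set is all of $\mathbb Z^{\nu}$ as soon as it is non-trivial) together with $W\neq 0$ and the irreducibility of $T$, is impossible. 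With $\text{ker}\,\psi$ an ideal, $W$ is directly a $\tau(B)_0$-submodule, so $W=T$ and the lemma follows.
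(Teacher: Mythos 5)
Your opening steps coincide with the paper's own argument for this lemma: the paper also works with the (abelian) span $\mathfrak{L}'$ of the operators $t^{\underline{r}}k_{0}\otimes b$, $b\in\ker\psi$, uses that all the matrices $\mathbf{B}_{\underline{m},b}$ are strictly upper triangular to get a nonzero common kernel, and then appeals to irreducibility of $T$ after claiming the relevant subspace is $\tau(B)_{0}$-stable. You have, moreover, put your finger on the real crux: the bracket $[t^{\underline{s}}d_{a}\otimes b',\,t^{\underline{m}}k_{0}\otimes c]=m_{a}t^{\underline{s}+\underline{m}}k_{0}\otimes b'c+\delta_{a0}(\cdots)$ produces the coefficient $b'c$, which need not lie in $\ker\psi$, and this is exactly the point that the paper's one-line verification of $\tau(B)_{0}$-stability passes over (the paper only infers that $\ker\psi$ is an ideal, hence that $\psi$ is multiplicative, \emph{after} this lemma, from its conclusion, so that fact is not available here without circularity).

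The difficulty is that your proposed repair does not actually close this gap. The filtration $W^{(j)}$ only re-derives what strict upper triangularity already gives (any product of $n$ of the operators kills $T$, i.e.\ $W^{(n-1)}=T$), and the decisive step $J=B$ is where the argument turns circular: to exclude $c\in\ker\psi\setminus\mathfrak{m}$ you invoke ``$W\neq 0$ together with the irreducibility of $T$,'' but irreducibility only applies once $W$ is known to be a $\tau(B)_{0}$-submodule, which is precisely what fails so long as some product $b'c$ escapes $\ker\psi$. The all-or-nothing behaviour of the vanishing locus of $\underline{m}\mapsto t^{\underline{m}}k_{0}\otimes c|_{T}$ likewise gives no contradiction with ``nilpotent but nonzero'' on its own. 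Your intermediate observations are correct and worth recording ($J$ is a unital subalgebra on which $\psi$ is multiplicative, $\mathfrak{m}=J\cap\ker\psi$ is an ideal of $B$, and $J=\mathbb{C}1\oplus\mathfrak{m}$), but $J=B$ is essentially a restatement of the lemma itself, so as written the proposal is incomplete at its final step; an independent proof that $\psi$ is an algebra homomorphism (or that $\ker\psi$ is an ideal) is still needed before either your $W$ or the paper's $\mathfrak{L}'\cdot T$ can be recognized as a $\tau(B)_{0}$-submodule.
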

\begin{proof}  Denote by $\mathfrak{L}'$ the lie subalgebra of $\tau(B)_{0}$ generated by $\{ t^{\underline r}k_{0}\otimes b\mid \underline{r}\in\mathbb{Z}^{\nu},\: b\in ker\psi\}$, then $\mathfrak{L'}\cdot T$ is $\tau(B)_{0}$-submodule of $T$ as \begin{equation*}
		t^{\underline r}d_{a}\otimes b'.t^{\underline s}k_{0}\otimes b\cdot v=s_{a}t^{\underline r + \underline s}k_{0}\otimes bb'\cdot v + \delta_{a0}\sum_{p=1}^{\nu}r_{p}t^{\underline r + \underline s}k_{p}\otimes bb'\cdot v,
	\end{equation*}
By Lemma 4.9, we obtain
\begin{equation*}
	t^{\underline r}d_{a}\otimes b'\cdot t^{\underline s}k_{0}\otimes b\cdot v=s_{a}t^{\underline r + \underline s}k_{0}\otimes bb' \cdot v.
\end{equation*}
 Since all $\textbf B_{\underline n,b}, \: b\in \textnormal{ker}\psi$, are nilpotent matrices, by the similar reasoning as in the proof of Lemma 4.9, it follows that $\{v\in T \mid \mathfrak{L}'.v=0\}$ is a proper submodule of $T$, and therefore $\mathfrak{L}'\cdot T=0$ and the result follows.
\end{proof}
Note that for any $b\in B, \: b-\psi(b)\cdot 1\in \text{ker}\psi$, and therefore by Lemma 4.10, it follows that 
\begin{equation}\label{4.3}
	t^{\underline m}k_{0}\otimes b\cdot v=\psi(b)t^{\underline{m}}k_{0}\otimes 1\cdot v, \:\text{for every}  \: b\in B,\: \underline{m}\in \mathbb{Z}^{\nu},\: v\in T.
\end{equation}
Now as $ker\psi$ is an ideal of codimension 1, there exists an  algebra homomorphism $\eta:B\to \mathbb{C}$ such that $\eta(1)=1$ and $\eta = c\psi$, as $1=\eta(1)=\eta(1)\eta(1)$ and $\psi(1)=1$, it follows that $c=1$ and $\psi$ is an algebra homomorphism. \\
The following theorem is regarding the associative action of operators $t^{\underline m}k_{0}\otimes b$ on $T$.
\begin{theo}For $\underline{m},\underline{n}\in \mathbb{Z}^{\nu},\: b_{1},b_{2}\in B,\:w\in T $, we have 
	\begin{equation}
	t^{\underline m}k_{0}\otimes b_{1}\cdot t^{\underline n}k_{0}\otimes b_{2}\cdot w\notag = c_{0}t^{\underline m +\underline n}k_{0}\otimes b_{1}b_{2}\cdot w.\end{equation}
\end{theo}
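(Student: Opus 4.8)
The plan is to strip off the algebra elements $b_1,b_2$ by means of \eqref{4.3}, and then to prove the remaining normalized identity by an irreducibility argument inside $T$. For the first step: by \eqref{4.3}, $t^{\underline m}k_0\otimes b_i$ acts on $T$ as $\psi(b_i)\,(t^{\underline m}k_0\otimes 1)$, so for $w\in T$ the left-hand side of the assertion equals $\psi(b_1)\psi(b_2)\,(t^{\underline m}k_0\otimes 1)\cdot(t^{\underline n}k_0\otimes 1)\cdot w$, while the right-hand side equals $c_0\,\psi(b_1b_2)\,(t^{\underline m+\underline n}k_0\otimes 1)\cdot w$. Since $\psi$ is an algebra homomorphism (established just before the theorem), $\psi(b_1b_2)=\psi(b_1)\psi(b_2)$, so it suffices to prove $(t^{\underline m}k_0\otimes 1)\cdot(t^{\underline n}k_0\otimes 1)\cdot w = c_0\,(t^{\underline m+\underline n}k_0\otimes 1)\cdot w$ for all $\underline m,\underline n\in\mathbb{Z}^\nu$ and $w\in T$.

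Put $K(\underline p)=\tfrac{1}{c_0}\,t^{\underline p}k_0\otimes 1$, a well-defined operator on $T$ because $t^{\underline p}k_0\otimes 1\in\tau(B)_0$; the normalized identity says precisely that $K(\underline m)K(\underline n)=K(\underline m+\underline n)$ on $T$. I would consider the subspace $T'=\{v\in T:\ K(\underline m)K(\underline n)v=K(\underline m+\underline n)v\ \text{for all}\ \underline m,\underline n\in\mathbb{Z}^\nu\}$ and show it is a nonzero $\tau(B)_0$-submodule; irreducibility of $T$ over $\tau(B)_0$ then gives $T'=T$ and finishes the proof. That $T'\neq 0$ is immediate from Lemma 4.3: since $\mathbf{B}_{\underline m,\underline n}-\mathbf{I}$ is strictly upper triangular, $K(\underline m)\,v_1(\underline n)=v_1(\underline m+\underline n)$ for all $\underline m,\underline n$, so $K(\underline m)K(\underline n)\,v_1(\underline r)=v_1(\underline m+\underline n+\underline r)=K(\underline m+\underline n)\,v_1(\underline r)$; thus every $v_1(\underline r)$ lies in $T'$, and these vectors are nonzero by Lemma 4.2.

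To prove $T'$ is $\tau(B)_0$-invariant, I would use that $\{X\in\tau(B)_0:\ XT'\subseteq T'\}$ is a Lie subalgebra, so invariance need only be checked on the spanning elements $h\otimes t^{\underline s}\otimes b$ ($h\in\mathfrak{\dot h}$), $t^{\underline s}k_i\otimes b$, and $t^{\underline s}d_i\otimes b$ ($0\le i\le\nu$) of $\tau(B)_0$. The crucial point is that $[\,t^{\underline m}k_0\otimes 1,\,X\,]$ acts on $T$ very simply: it vanishes for $X=h\otimes t^{\underline s}\otimes b$ (as $\mathcal{K}$ commutes with $\mathfrak{\dot g}\otimes A$ in $\tau$) and for $X=t^{\underline s}k_i\otimes b$ (as $\mathcal{K}$ is abelian); it vanishes on $T$ for $X=t^{\underline s}d_0\otimes b$, because $[t^{\underline m}k_0,t^{\underline s}d_0]$ is a linear combination of the $t^{\underline m+\underline s}k_p$ with $1\le p\le\nu$, which act trivially on $T$ by Lemma 4.9; and for $X=t^{\underline s}d_a\otimes b$ with $1\le a\le\nu$ one has $[\,t^{\underline m}k_0\otimes 1,\,t^{\underline s}d_a\otimes b\,]=-m_a\,t^{\underline m+\underline s}k_0\otimes b$, hence $[K(\underline m),X]=-m_a\,\psi(b)\,K(\underline m+\underline s)$ on $T$ by \eqref{4.3}. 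Given $v\in T'$, for $X$ of the first three types $K(\underline m)K(\underline n)(Xv)=X\,K(\underline m)K(\underline n)v=X\,K(\underline m+\underline n)v=K(\underline m+\underline n)(Xv)$; for $X=t^{\underline s}d_a\otimes b$ I would commute $X$ leftward through $K(\underline n)$ and then $K(\underline m)$, collect the two commutator terms, and apply the defining relation of $T'$ to the remaining products $K(\underline p)K(\underline q)v$, after which both $K(\underline m)K(\underline n)(Xv)$ and $K(\underline m+\underline n)(Xv)$ reduce to $X\,K(\underline m+\underline n)v-(m_a+n_a)\,\psi(b)\,K(\underline m+\underline n+\underline s)v$. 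Hence $Xv\in T'$ in all cases.

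Thus $T'$ is a nonzero $\tau(B)_0$-submodule of the irreducible module $T$, so $T'=T$, which proves the normalized identity on all of $T$; together with the first paragraph this yields the theorem. The step I expect to require the most care is the commutator bookkeeping for $X=t^{\underline s}d_a\otimes b$ — one must track the order in which the shifted factors $K(\underline m+\underline s)$, $K(\underline n+\underline s)$ appear and check that they cancel against the $T'$-relation — while the reduction of the first paragraph and the nonvanishing of $T'$ via Lemma 4.3 are routine.
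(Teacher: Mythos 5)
Your proposal is correct and follows essentially the same route as the paper: reduce to the normalized operators via \eqref{4.3} and the multiplicativity of $\psi$, use Lemma 4.3 to see that the defect of the identity is governed by the strictly upper triangular matrices $\mathbf{B}_{\underline m,\underline n}-\mathbf{I}$, and conclude by exhibiting the set where the identity holds as a nonzero $\tau(B)_0$-submodule of the irreducible module $T$. The only difference is one of detail: the paper asserts that the common kernel of the (locally nilpotent) difference operators is a nonzero submodule, whereas you verify the $\tau(B)_0$-invariance explicitly by the commutator bookkeeping with $t^{\underline s}d_a\otimes b$ and exhibit the concrete nonzero vectors $v_1(\underline r)$ in it.
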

\begin{proof}
For \(\underline{m},\underline{n}\in \mathbb{Z}^{\nu},\: b_{1},b_{2}\in B\), consider the expression 
\begin{equation}
		(t^{\underline m}k_{0}\otimes b_{1}.t^{\underline n}k_{0}\otimes b_{2}\notag - c_{0}t^{\underline m +\underline n}k_{0}\otimes b_{1}b_{2}).(v_{i})_{i=1}^{n},
	\end{equation}
\begin{align*} \begin{split} 
        &=(v_{i}(\underline{m}+\underline{n}))_{i=1}^{n}(\psi(b_{1})\psi(b_{2})t^{\underline{m}}k_{0}\otimes{1}\cdot t^{\underline{n}}k_{0}\otimes 1-c_{0}\psi(b_{1}b_{2})t^{\underline{m}+\underline{n}}k_{0}\otimes 1),\\
	&=(v_{i}(\underline{m}+\underline{n}))_{i=1}^{n}c_{0}^{2}(\psi(b_{1})\psi(b_{2})\textbf{B}_{\underline{m},\underline{n}}-\psi(b_{1}b_{2})\textbf{I}), \\
 & =(v_{i}(\underline{m}+\underline{n}))_{i=1}^{n}\psi(b_{1}b_{2})c_{0}^{2}(\textbf{B}_{\underline{m},\underline{n}}-\textbf{I}).
\end{split}
	\end{align*}
Therefore, the operators $(t^{\underline m}k_{0}\otimes b_{1}.t^{\underline n}k_{0}\otimes b_{2}-c_{0}t^{\underline m + \underline n}k_{0}\otimes b_{1}b_{2})$ are locally nilpotent for all $\underline{m},\underline{n}\in \mathbb{Z}^{\nu},\:b_{1},b_{2}\in B$. Let $\mathfrak{Z}$ be the Lie-subalgebra of $\tau(B)_{0}$ generated by all such operators, then
$ \{ v\in T \mid \mathfrak{Z}.v=0 \} $ is a non-zero submodule of $T$ and  irreducibility of $T$ gives

\begin{equation*}
	\mathfrak{Z}.T=0
\end{equation*}
which completes the proof.
\end{proof}
Let $S=\Span_{\mathbb{C}}\{\mathfrak{h},k_{0},d_{0}\}$,\: then $S$ is an abelian Lie algebra and, for every $\underline{m}\in \mathbb{Z}^{\nu}$, we identify the elements $t^{\underline{m}}k_{0}\otimes1$ with $k_{0}\otimes t^{\underline{m}}\otimes 1$ and  the elements  $t^{\underline{m}}d_{0}\otimes 1$ with $d_{0}\otimes t^{\underline m}\otimes 1$. Then $T$ is uniformly bounded, irreducible $(S\rtimes DerA_{\nu})\otimes B$-module and as $S$ acts non-trivially on $T$, we have following Proposition of Sachin Sharma et. al [9]. 
\begin{prop} \textbf{[9, Proposition 3.11]} There exists a subspace $\dot S$ of S of codimension 1 such that $\dot{S}\otimes A_{\nu}\otimes B$ acts trivially on T, and hence T is an irreducible  $(A_{\nu}\rtimes DerA_{\nu})\otimes B$-module. 
\end{prop}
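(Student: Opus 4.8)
The statement is [9,~Proposition~3.11] specialized to the module $T$, so my plan is to verify that $T$ meets its hypotheses and then carry out the mechanism of that proof. First I would note that the operators $S\otimes 1$ commute with the whole action of $\tau(B)_0$ on $T$: the only obstruction terms arising in the relevant brackets are currents $t^{\underline r}k_i\otimes b$ with $1\le i\le\nu$, and these act as zero on $T$ by Lemma~4.9 (the underlying bracket computations only involve elements of $t_0$-degree $0$, where the offending cocycle and structure contributions land in $\mathcal K$). Hence, $T$ being irreducible with finite-dimensional weight spaces, Schur's lemma forces $S\otimes 1$ to act by a character $\chi\colon S\to\mathbb C$; since $\chi(k_0)=c_0\ne0$, the subspace $\dot S:=\ker\chi$ has codimension one in $S$, with $k_0$ lying in a complement. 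By Lemma~4.2 the $\mathbb Z^\nu$-grading of $T$ is uniformly bounded, and after the identifications made just before the proposition, together with Lemma~4.9, $T$ is an irreducible module over $(S\rtimes DerA_\nu)\otimes B$ on which $S$ acts nontrivially --- precisely the setting of [9,~Proposition~3.11].

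For the mechanism one must show that $\dot S\otimes A_\nu\otimes B$ acts trivially on $T$. On $T$ the span $S\otimes A_\nu\otimes B$ is abelian (the surviving cross-bracket terms are $\mathcal K$-currents in the $k_i$, $i\ge1$, killed by Lemma~4.9), and since $[\,t^{\underline r}d_a\otimes b',\,s\otimes t^{\underline m}\otimes b\,]=m_a\,s\otimes t^{\underline m+\underline r}\otimes bb'$ keeps the first tensor factor $s$ inside $\dot S$, the subspace $\dot S\otimes A_\nu\otimes B$ is an ideal of $(S\rtimes DerA_\nu)\otimes B$. The crux is to prove that each $s\otimes t^{\underline m}\otimes b$ with $s\in\dot S$ acts locally nilpotently on $T$; this is where uniform boundedness enters, through the same commutator/conjugation device used in Lemmas~4.6--4.8. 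One reduces first to $b$ lying in the kernel of a suitable character of $B$, where bracketing $s\otimes t^{\underline m}\otimes b$ with a degree derivation $t^{-\underline m}d_a\otimes 1$ (choosing a coordinate $a$ with $m_a\ne0$) collapses --- via a structure relation of the type~\eqref{4.2} --- to a nonzero scalar operator; were $s\otimes t^{\underline m}\otimes b$ not locally nilpotent it would be invertible on $T$ (weight spaces being finite-dimensional), and the resulting conjugation identity would present two similar $n\times n$ matrices differing by a nonzero multiple of the identity, contradicting equality of traces. One then propagates from $\ker$-elements of $B$ to all of $B$ by a derivative trick as in Lemma~4.8, and deduces the degree-$\underline 0$ case from the degree-$\neq\underline 0$ one. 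Granted local nilpotency, the operators of the abelian ideal $\dot S\otimes A_\nu\otimes B$ commute and are locally nilpotent on each graded piece, so they have a common null vector; hence $W:=\{v\in T:\dot S\otimes A_\nu\otimes B\cdot v=0\}$ is a nonzero submodule and, by irreducibility, $W=T$, as in Lemmas~4.9--4.11.

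With $\dot S\otimes A_\nu\otimes B$ acting trivially, the action of $S\otimes A_\nu\otimes B$ collapses onto the one-dimensional quotient $S/\dot S$, spanned by the class of $k_0$: writing $s=\tfrac1{c_0}(c_0 s-\chi(s)k_0)+\tfrac{\chi(s)}{c_0}k_0$ and using \eqref{4.3}, every $s\otimes t^{\underline m}\otimes b$ acts as the scalar $\chi(s)\psi(b)$ times $k_0\otimes t^{\underline m}\otimes 1$, while by Theorem~4.11 the operators $\rho(t^{\underline m}):=\tfrac1{c_0}\,k_0\otimes t^{\underline m}\otimes 1$ furnish a commutative, associative, unital $A_\nu$-module structure on $T$. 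Together with the action of $DerA_\nu\otimes B$ inherited from $\tau(B)_0$ --- whose brackets $[\,t^{\underline r}d_a\otimes b',\,k_0\otimes t^{\underline m}\otimes 1\,]=m_a\,k_0\otimes t^{\underline m+\underline r}\otimes b'$ reproduce the semidirect-product relations of $A_\nu\rtimes DerA_\nu$ once $B$ is absorbed through $\psi$ --- this makes $T$ a module over $(A_\nu\rtimes DerA_\nu)\otimes B$; since we have only passed to a quotient by operators acting as zero, $T$ remains irreducible. I expect the main obstacle to be the local-nilpotency step for the $\dot S$-currents, i.e.\ squeezing the needed conclusion out of uniform boundedness; the passage from local nilpotency to trivial action, and the bookkeeping that identifies the surviving direction, both follow patterns already in place in Section~4.
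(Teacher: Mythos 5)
Your proposal is correct and takes essentially the same approach as the paper: the paper also identifies $\dot S$ as the kernel of the character by which $S$ acts on $T$ (written explicitly as the span of $c_{0}\check\alpha_{i}-\Lambda(\check\alpha_{i})k_{0}$ and $c_{0}d_{0}-\Lambda(d_{0})k_{0}$) and establishes that $\dot S\otimes A_{\nu}\otimes B$ annihilates $T$ by rerunning the local-nilpotency-plus-irreducibility mechanism of Lemmas~4.6 and~4.10. Your write-up merely spells out the conjugation/trace argument and the collapse onto $S/\dot S\cong\mathbb{C}k_{0}$ that the paper leaves implicit.
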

\begin{proof} In the present context this theorem can also be proved in the following way:
Note that for any $1\leq i\leq l $, the elements $c_{0}\check\alpha_{i}-\Lambda(\check\alpha_{i})k_{0}$ acts as zero on $T$, and on proceeding similar to Lemma 4.6 and 4.10, we obtain
\begin{equation}\label{4.4}
c_{0}\check\alpha_{i}\otimes t^{\underline m}\otimes b\cdot v = \Lambda(\check\alpha)t^{\underline m}k_{0}\otimes b \cdot v \; \; \forall \: 1\leq i\leq l,\: \underline{m}\in \mathbb{Z}^{\nu},\: b\in B,\: v\in T,
\end{equation}
and 
\begin{equation}\label{4.5}
c_{0}t^{\underline{m}}d_{0}\otimes b = \Lambda(d_{0})t^{\underline{m}}k_{0}\otimes b.
\end{equation}
    Consider the basis $\{c_{0}\check{\alpha_{i}}-\Lambda(\check\alpha_{i})k_{0},c_{0}d_{0}-\Lambda(d_{0})k_{0},k_{0} \mid 1\leq i \leq l \}$ of $S$, then using \eqref{4.4} and \eqref{4.5}, the result follows for  $\dot{S}=\{c_{0}\check{\alpha_{i}}-\Lambda(\check\alpha_{i})k_{0},\: c_{0}d_{0}-\Lambda(d_{0})k_{0} \mid 1\leq i \leq l \}$.
\end{proof}
 Using \eqref{4.3} and \eqref{4.4}, we have
 \begin{equation}
     \check\alpha_{i}\otimes t^{\underline{m}}\otimes b = \psi(b)\check\alpha_{i}\otimes t^{\underline m}\otimes 1 \; \; \text{on $T$},1\leq i\leq l.
 \end{equation}
 Similarly,
 \begin{equation}
     t^{\underline{m}}d_{0}\otimes b = \psi(b)t^{\underline m}d_{0}\otimes 1 \; \; \text{on $T$}.
 \end{equation}
 With the above chosen basis, the element $k_{0}\otimes 1 $ acts non-trivially on $T$ and by Theorem 4.11, the action of the operators $\frac{1}{c_{0}}t^{\underline m}k_{0}\otimes b$ is associative on $T$, therefore we have the following theorem from P. Chakraborty and S. Eswara Rao [11].
    \begin{theo}
    Let V be an irreducible uniformly bounded module for $(DerA_{\nu}\ltimes A_{\nu})\otimes B$. If $\frac{1}{c_{0}}t^{0}k_{0}\otimes 1$ acts as an identity on V, then V is a single point evaluation module.
\end{theo}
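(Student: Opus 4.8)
The plan is to reduce the statement to the already-known classification of irreducible uniformly bounded modules for $(\mathrm{Der}A_\nu \ltimes A_\nu) \otimes B$, making essential use of the associativity established in Theorem 4.11 together with the observation that $k_0 \otimes 1$ acts as the identity. First I would set up the action: on $V$ the operators $\rho(\underline m, b) := \frac{1}{c_0} t^{\underline m} k_0 \otimes b$ satisfy $\rho(\underline 0, 1) = \mathrm{id}$, and by Theorem 4.11 they satisfy $\rho(\underline m, b_1)\,\rho(\underline n, b_2) = \rho(\underline m + \underline n, b_1 b_2)$. Hence $\underline m \mapsto \rho(\underline m, b)$ defines, for the commutative algebra $A_\nu \otimes B$, a representation by operators that is genuinely \emph{associative}, i.e. the map $A_\nu \otimes B \to \mathrm{End}(V)$, $t^{\underline m}\otimes b \mapsto \rho(\underline m, b)$, is an algebra homomorphism sending $1 \otimes 1$ to the identity. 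The point is that this turns the semidirect-product module structure into a module over the smash product $\mathrm{Der}A_\nu \ltimes (A_\nu \otimes B)$ in which the ``function part'' $A_\nu \otimes B$ acts associatively.

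Next I would invoke the structure of uniformly bounded irreducible modules over $\mathrm{Der}A_\nu \ltimes A_\nu$-type algebras with an associative commutative subalgebra acting with identity. The key is that an associative action of a finitely generated commutative unital algebra $R = A_\nu \otimes B$ on a module with finite-dimensional weight spaces, compatible with the grading by $\mathbb{Z}^\nu$ and with the derivation action, forces the $A_\nu$-part to localize: the Laurent monomials $t^{\underline m}\otimes 1$ act invertibly (their inverses being $t^{-\underline m}\otimes 1$, again by associativity), and the finiteness of weight spaces together with irreducibility forces the remaining commutative algebra — which is a quotient of $B$ — to act through a single maximal ideal, i.e. through a one-dimensional quotient $\mathrm{ev}_a \colon B \to \mathbb{C}$ at a point $a$ of $\mathrm{Spec}\,B$. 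This is exactly the content of the cited result of P. Chakraborty and S. Eswara Rao [11], and I would apply it verbatim once the hypotheses (irreducibility, uniform boundedness, associativity, identity action of $\frac1{c_0}t^{\underline 0}k_0 \otimes 1$) are verified, all of which have been arranged in the preceding lemmas and Theorem 4.11.

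Concretely the argument runs: (i) check that the $\mathbb{Z}^\nu$-grading of $V$ and the action of $\mathrm{Der}A_\nu$ (via the $t^{\underline m} d_i$'s) are compatible with $\rho$, which follows from the bracket relations $[t^{\underline m} d_a, t^{\underline n} k_0] = n_a t^{\underline m+\underline n}k_0$ already used in \eqref{4.2}; (ii) conclude that $V$ is a module over the smash product $\mathcal{U} := U(\mathrm{Der}A_\nu) \# (A_\nu \otimes B)$, irreducible and uniformly bounded, with $1\otimes 1 \in A_\nu\otimes B$ acting as identity; (iii) apply [11] to obtain an algebra homomorphism $B \to \mathbb{C}$, $b \mapsto \lambda_b$, such that $b$ acts as $\lambda_b \cdot 1$, i.e. the $B$-factor is frozen at a single point $a \in \mathrm{Spec}\,B$; (iv) identify the resulting module with a single point evaluation module $V_a$ for $\mathrm{Der}A_\nu \ltimes A_\nu$. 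I would then note that conversely every such evaluation module is irreducible and uniformly bounded, so the correspondence is exact.

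The main obstacle I anticipate is purely bookkeeping at step (i)–(ii): one must make sure the associativity from Theorem 4.11 upgrades the \emph{Lie}-module structure over $(\mathrm{Der}A_\nu \ltimes A_\nu)\otimes B$ to an \emph{associative}-module structure over the smash product without introducing a cocycle or a twist — in particular one needs $t^{\underline m} k_0 \otimes b$ and $t^{\underline n} k_0 \otimes b'$ to commute (which they do, by Theorem 4.11, since $b b' = b' b$ and $\underline m + \underline n = \underline n + \underline m$) and one needs the derivation $t^{\underline s} d_a \otimes b'$ to act as a derivation of this associative algebra, which is forced by the Leibniz-type bracket $[t^{\underline s} d_a, t^{\underline m} k_0 \otimes b] = m_a t^{\underline s + \underline m} k_0 \otimes b + (\text{terms in } k_i,\ i\ge 1)$ together with Lemma 4.9, which kills precisely those extra $k_i$-terms. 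Once these compatibilities are in place the invocation of [11] is immediate and the theorem follows.
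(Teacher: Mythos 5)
Your proposal is correct and takes essentially the same route as the paper: the paper gives no independent proof of this statement either, but simply quotes the result of P.~Chakraborty and S.~Eswara Rao [11] once Theorem 4.11 has supplied the associative action of the operators $\frac{1}{c_{0}}t^{\underline m}k_{0}\otimes b$ and $k_{0}\otimes 1$ is known to act as the identity. Your bookkeeping in steps (i)--(ii), using the bracket $[t^{\underline m}d_{a},t^{\underline n}k_{0}]=n_{a}t^{\underline m+\underline n}k_{0}$ and Lemma 4.9 to remove the extra $k_{i}$-terms, is exactly the (implicit) verification on which the paper's citation rests.
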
 
Therefore, we have 
\begin{equation}
t^{\underline m}d_{p}\otimes b = \psi(b)t^{\underline m}d_{p}\otimes 1, \; \forall \: \underline{m}\in \mathbb{Z}^{\nu},\: b\in B,\: 1\leq p \leq \nu. 
\end{equation}
 We now prove that such relation also holds for the action of any homogeneous $\mathbb{Z}^{\nu+1}$-graded element of $\tau(B)_{-}$ on $T$.
\\ The $\{0\}\times \mathbb{Z}^{\nu}$-graded elements of $\tau(B)_{-}$ like $x_{-\alpha}\otimes t^{\underline{m}}\otimes b$ (for $\alpha \in \dot\Delta_{+})$ acts as $\psi(b)x_{-\alpha}\otimes t^{\underline {m}}\otimes 1$ as for any $v\in T$, the vector $x_{-\alpha}\otimes{t^{\underline m}\otimes b}\cdot v - \psi(b)x_{-\alpha}\otimes t^{\underline m}\otimes 1\cdot v$ is annihilated by each positive root vector and hence by $\tau(B)_{+}$, and therefore, this vector must be zero, otherwise we get a highest weight vector of weight $\Lambda-\alpha$, a contradiction. Therefore
\begin{equation}
x_{-\alpha}\otimes t^{\underline{m}}\otimes b = \psi(b)x_{-\alpha}\otimes t^{\underline{m}}\otimes1 \;  \text{on $T$}, \: \forall \: \alpha \in \Dot\Delta_{+},\:\underline{m}\in \mathbb{Z}^{\nu}.
\end{equation}
\\ We will now demonstrate that $t_{0}^{m_{0}}t^{\underline m}d_{0}\otimes b = \psi(b)t_{0}^{m_{0}}t^{\underline{m}}d_{0}\otimes 1$, for $m_{0}<0$, and the case $m_{0}=0$ is already done above in 4.13. The proof is inductive, so it is enough to prove the result for $m_{0}=-1$.

\begin{lemm}
    $t_{0}^{-1}t^{\underline{m}}d_{0}\otimes b \cdot v= \psi(b)t_{0}^{-1}t^{\underline m}d_{0}\otimes 1 \cdot v $ for $v\in T, \:\underline{m}\in\mathbb{Z}^{\nu},\:b\in B$.
\end{lemm}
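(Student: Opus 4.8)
The plan is to mimic the bracket-trick that has been used repeatedly above (Lemmas 4.6, 4.8, 4.9 and the argument preceding equation (4.13)): produce a nonzero vector in $T$ killed by the operator $t_0^{-1}t^{\underline m}d_0\otimes b - \psi(b)\,t_0^{-1}t^{\underline m}d_0\otimes 1$, collect all such operators into a Lie subalgebra $\mathfrak{L}''$ of $\tau(B)_-$ (or more precisely into an operator algebra acting on the $\tau(B)_0$-module $T$), show that $\{v\in T\mid \mathfrak{L}''\cdot v=0\}$ is stable under $\tau(B)_0$, and then invoke irreducibility of $T$ as a $\tau(B)_0$-module to conclude it is all of $T$. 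Since $\psi$ is an algebra homomorphism (established after Lemma 4.10), it suffices to treat $b$ in a spanning set; I will take $b\in\ker\psi$, so the target is to show $t_0^{-1}t^{\underline m}d_0\otimes b$ itself acts trivially on $T$.

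First I would fix $\underline m\in\mathbb Z^\nu$, $b\in\ker\psi$, and bring in a suitable element of $\tau(B)_0$ to raise the $t_0$-degree and hit something we already control. The natural candidate is a bracket of the form $[\,t_0\, t^{\underline n}d_0\otimes b',\ t_0^{-1}t^{\underline m}d_0\otimes b\,]$ or $[\,t_0\, t^{\underline n} x\otimes b',\ \cdot\,]$; computing it via the $\tau$-brackets listed in Section 2 (the $[d_a,d_b]$ relation together with the cocycle $\phi$, plus the $[d_a,k_b]$ relation) produces a $d_0$-type term of $t_0$-degree $0$ plus central correction terms of $t_0$-degree $0$, all of which are of the form $t^{\underline r}d_0\otimes bb'$ and $t^{\underline r}k_p\otimes bb'$ — and these are exactly the operators already shown (in 4.13 and Lemma 4.9) to equal $\psi$ times their $b=1$ counterparts, hence to be governed by $\psi(bb')=\psi(b)\psi(b')=0$. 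So the positive-$t_0$-degree operator $t_0 t^{\underline n}d_0\otimes b'$ "almost commutes" with $t_0^{-1}t^{\underline m}d_0\otimes b$ modulo operators that already annihilate $T$ (or act as known scalars). Combining this with local nilpotency — which I get because, as in the earlier lemmas, a putative invertibility of $t_0^{-1}t^{\underline m}d_0\otimes b$ together with the vanishing of the correction terms would force a similarity of matrices that cannot hold while the operator also satisfies the contradiction-producing degree relations — yields a nonzero $w\in T$ with $t_0^{-1}t^{\underline m}d_0\otimes b\cdot w=0$. Here the positive-$t_0$-degree elements genuinely kill the highest-weight space $T$, which is what lets the "shift down and read off" step run.

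Next, with one nonzero annihilated vector in hand, I would globalize exactly as in Lemmas 4.9 and 4.10: let $\mathfrak L''$ be the Lie subalgebra of $\tau(B)_0$ (acting on $T$) generated by the operators $t_0^{-1}t^{\underline m}d_0\otimes b$, $b\in\ker\psi$, check that $W=\{v\in T\mid \mathfrak L''\cdot v=0\}$ is a $\tau(B)_0$-submodule by bracketing a generic $t^{\underline s}d_a\otimes b'$ past a generator and observing that all resulting correction terms are again of types already known to act trivially on $T$ (using 4.13, Lemma 4.9, and the just-proved relation for other such generators), so $W$ is $\tau(B)_0$-stable; since $W\neq 0$ and $T$ is irreducible over $\tau(B)_0$, $W=T$. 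Then for arbitrary $b\in B$, writing $b=\psi(b)1+(b-\psi(b)1)$ with the second summand in $\ker\psi$, linearity gives $t_0^{-1}t^{\underline m}d_0\otimes b\cdot v=\psi(b)\,t_0^{-1}t^{\underline m}d_0\otimes 1\cdot v$ for all $v\in T$, which is the claim.

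The main obstacle I anticipate is the bookkeeping in the bracket computation: I must verify that \emph{every} correction term produced when commuting a positive-$t_0$-degree element past $t_0^{-1}t^{\underline m}d_0\otimes b$ — including the contributions of the cocycle $\phi=c_1\phi_1+c_2\phi_2$ and of the $[d_a,k_b]$ relation — lands among operators of $t_0$-degree $0$ of the form $t^{\underline r}d_0\otimes (\cdot)$ or $t^{\underline r}k_p\otimes(\cdot)$, so that Proposition 4.12/Theorem 4.13 and Lemma 4.9 apply and the whole correction collapses to something proportional to $\psi(\text{product})=0$. A secondary subtlety is choosing the auxiliary element and, when $\nu=1$, the auxiliary index so that the degree-raising bracket is nonzero and the ensuing matrix-conjugation identity is genuinely contradictory — this is the $\nu=1$ edge case handled separately in Lemma 4.8, and I expect a similar special-case argument (using the relation $\sum_p r_p t^{\underline r}k_p\otimes b=0$) to be needed here.
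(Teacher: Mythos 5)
Your central computation --- commuting a positive $t_{0}$-degree element such as $t_{0}t^{\underline{n}}d_{0}\otimes b'$ past $t_{0}^{-1}t^{\underline{m}}d_{0}\otimes b$ and observing that the resulting degree-zero correction terms are exactly the operators already controlled by (4.7), (4.8) and Lemma 4.9 --- is indeed the heart of the paper's proof. But the conclusion the paper draws from it is not the one you draw. The paper observes that this computation, together with $\tau(B)_{+}\cdot T=0$ and the fact that the degree-zero root vectors $x_{\alpha}\otimes t^{\underline{n}}\otimes b'$ commute with $t_{0}^{-1}t^{\underline{m}}d_{0}\otimes b$, shows that the single vector
\begin{equation*}
w_{v}=t_{0}^{-1}t^{\underline{m}}d_{0}\otimes b\cdot v-\psi(b)\,t_{0}^{-1}t^{\underline{m}}d_{0}\otimes 1\cdot v
\end{equation*}
is annihilated by all of $\tau(B)_{+}$; since its weight has $\delta_{0}$-coefficient strictly below that of every weight of $T$, a nonzero such vector would be a new highest weight vector, contradicting irreducibility of $V$ (the same argument the paper uses just before this lemma for $x_{-\alpha}\otimes t^{\underline{m}}\otimes b$). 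Hence $w_{v}=0$ and the lemma follows in one step.

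The machinery you propose instead does not apply here, and this is a genuine gap. The operator $t_{0}^{-1}t^{\underline{m}}d_{0}\otimes b$ lies in $\tau(B)_{-}$, not in $\tau(B)_{0}$: it does not preserve $T$ but maps $T$ into the part of $V$ of $\delta_{0}$-degree $-1$. Consequently there are no square matrices analogous to $\mathbf{B}_{\underline{m},b}$, no dichotomy between invertibility and local nilpotency (Lemmas 4.1, 4.2 and 4.6 all rest on the operators being degree-preserving endomorphisms of $T$ with $\dim T_{\underline{m}}=\dim T_{\underline{n}}$ finite), and the set $\{v\in T\mid \mathfrak{L}''\cdot v=0\}$ cannot be handled as in Lemmas 4.9 and 4.10, because $\mathfrak{L}''\not\subset\tau(B)_{0}$ and so ``$W$ is a $\tau(B)_{0}$-submodule of $T$'' is not the right statement to aim for. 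Moreover, bracketing a generic $t^{\underline{s}}d_{a}\otimes b'$ with $t_{0}^{-1}t^{\underline{m}}d_{0}\otimes b$ produces terms $t_{0}^{-1}t^{\underline{r}}d_{a}\otimes bb'$ with $a\neq 0$ and $t_{0}^{-1}t^{\underline{r}}k_{p}\otimes bb'$, which are only brought under control in Lemma 4.16, i.e.\ \emph{after} the present lemma; your claim that ``all resulting correction terms are of types already known to act trivially'' is therefore circular at this point of the development. The repair is simply to discard the local-nilpotency and globalization scaffolding and replace it by the highest-weight-vector argument above.
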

\begin{proof} For any $v\in T$, the vector $t_{0}^{-1}t^{\underline m}d_{0}\otimes b.v - \psi(b)t_{0}^{-1}t^{\underline m}d_{0}\otimes 1.v$ is annihilated by each $\{n_{0}\}\times\mathbb{Z}^{\nu}$-graded positive root vectors with $n_{0}>0$, this follows from (4.7), (4.8) and the fact that $\tau(B)_{+}\cdot T=0$. For any $\alpha\in \dot\Delta$, consider \begin{align*} \begin{split}
x_{\alpha}\otimes t^{\underline n}\otimes b'\cdot t_{0}^{-1}t^{\underline m}d_{0}\otimes b\cdot v  &= t_{0}^{-1}t^{\underline{m}}d_{0}\otimes b\cdot x_{\alpha}\otimes t^{\underline{n}}\otimes b'\cdot v, \\ 
&= 0 \; \; (\because \tau(B)_{+}\cdot T=0),
\end{split}
\end{align*}
and the result follows.
\end{proof}
Inductively, it follows from Lemma 4.14 that $t_{0}^{m_{0}}t^{\underline{m}}d_{0}\otimes b=\psi(b)t_{0}^{m_{0}}t^{\underline{m}}d_{0}\otimes 1 \; \forall \: m_{0}<0,\:\underline{m}\in \mathbb{Z}^{\nu},\:b\in B.$ 
\begin{lemm}
    For $m_{0}<0, \underline{m}\in\mathbb{Z}^{\nu},b\in B$, we have
   \begin{align*} \begin{split}
    x_{-\alpha}\otimes t_{0}^{m_{0}}t^{\underline{m}}\otimes b &= \psi(b)x_{-\alpha}\otimes t_{0}^{m_{0}}t^{\underline{m}}\otimes 1 \; \; \forall \: \alpha \in \dot\Delta, \; \text{on $T$},
    \\
    h\otimes t_{0}^{m_{0}}t^{\underline{m}}\otimes b &=\psi(b)h\otimes t_{0}^{m_{0}}t^{\underline{m}}\otimes 1 \; \; \forall \; h\in \mathfrak{\dot h}, \; \text{on $T$}.
    \end{split}
    \end{align*}
\end{lemm}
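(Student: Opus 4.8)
The plan is to prove the two displayed identities in Lemma 4.16 by the same bootstrapping strategy that produced Lemmas 4.14 and 4.15: exploit that $\tau(B)_+ \cdot T = 0$, that the corresponding $(b - \psi(b))$-twisted vectors are highest-weight-type vectors, and that a nonzero highest weight vector of a forbidden weight would contradict irreducibility (or the highest-weight property of $T$). Since the $m_0 = 0$ case is already established (relation (4.9) for $x_{-\alpha}\otimes t^{\underline m}\otimes b$, and the $\mathfrak{\dot h}$-part follows from (4.6) together with the identification of $\check\alpha_i$ with $\mathfrak{\dot h}$-elements and $\dot{\mathfrak h} = \Span\{\check\alpha_i\}$), and since $t_0^{m_0}t^{\underline m}d_0\otimes b = \psi(b)\,t_0^{m_0}t^{\underline m}d_0\otimes 1$ for all $m_0 < 0$ is now known, it suffices to do induction on $-m_0$, and within each step it is enough to handle $m_0$ assuming the claim for $m_0 + 1$.

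**First I would** fix $m_0 < 0$, $\underline m \in \mathbb Z^\nu$, $b \in B$, and set
\[
w_\alpha = x_{-\alpha}\otimes t_0^{m_0}t^{\underline m}\otimes b \cdot v - \psi(b)\,x_{-\alpha}\otimes t_0^{m_0}t^{\underline m}\otimes 1 \cdot v, \qquad v \in T.
\]
The goal is to show $w_\alpha = 0$. The mechanism is: act on $w_\alpha$ by root vectors $x_\beta \otimes t_0^{n_0}t^{\underline n}\otimes b'$ with $n_0 > 0$ (which lie in $\tau(B)_+$), and by the $n_0 = 0$ positive root vectors $x_\gamma\otimes t^{\underline n}\otimes b'$ with $\gamma \in \dot\Delta_+$. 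For the first type, commuting $x_\beta\otimes t_0^{n_0}t^{\underline n}\otimes b'$ past $x_{-\alpha}\otimes t_0^{m_0}t^{\underline m}\otimes b$ produces terms of strictly higher $t_0$-degree $m_0 + n_0$; if $m_0 + n_0 > 0$ these land in $\tau(B)_+$ and kill $v$, if $m_0 + n_0 = 0$ one uses relation (4.9) (the $\dot{\mathfrak h}$-valued or scalar pieces coming from $[\mathfrak{\dot g}, \mathfrak{\dot g}]$ together with the central/derivation cocycle terms, all already shown to be $\psi$-compatible for $t_0$-degree $0$), and if $m_0 + n_0$ is still negative one invokes the inductive hypothesis for degree $m_0 + n_0 > m_0$. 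In all cases the commutator term contributes a $\psi$-compatible quantity, so $x_\beta\otimes t_0^{n_0}t^{\underline n}\otimes b' \cdot w_\alpha = 0$. The $n_0 = 0, \gamma \in \dot\Delta_+$ case is analogous but easier: $[\,x_\gamma\otimes t^{\underline n}\otimes b',\, x_{-\alpha}\otimes t_0^{m_0}t^{\underline m}\otimes b\,]$ has $t_0$-degree $m_0$, and when $\gamma \neq \alpha$ it is again of the form $x_\mu\otimes t_0^{m_0}t^{\cdot}\otimes bb'$ with $\mu \in \dot\Delta$, handled by the same induction, while $\gamma = \alpha$ produces an $\mathfrak{\dot h}$-valued term $h\otimes t_0^{m_0}t^{\underline m + \underline n}\otimes bb'$ plus cocycle terms — which forces me to prove the two identities of Lemma 4.16 \emph{simultaneously} rather than sequentially. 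Having shown $\tau(B)_+\cdot w_\alpha = 0$, if $w_\alpha \neq 0$ it would be a highest weight vector of weight $\Lambda - \alpha - m_0\delta_0 - \delta_{\underline m}$ (for $\alpha\in\dot\Delta_+$) or of weight $\Lambda + \alpha - m_0\delta_0 - \delta_{\underline m}$ etc., in any case lying strictly below $T$'s top layer in the $\tau(B)_+$-filtration — contradicting that $T$ is exactly the space annihilated by $\tau(B)_+$ and that $V = U(\tau(B)_-)\cdot T$ has $T$ as its unique "top." Hence $w_\alpha = 0$.

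**For the $\mathfrak{\dot h}$-part**, with $h\otimes t_0^{m_0}t^{\underline m}\otimes b \cdot v - \psi(b)\,h\otimes t_0^{m_0}t^{\underline m}\otimes 1\cdot v =: u_h$, the same computation applies: acting by $\tau(B)_+$-elements, the relevant brackets $[x_\beta\otimes t_0^{n_0}t^{\underline n}\otimes b',\, h\otimes t_0^{m_0}t^{\underline m}\otimes b]$ again raise $t_0$-degree or land in already-handled cases, so $\tau(B)_+\cdot u_h = 0$, and $u_h\neq 0$ would again yield a forbidden highest weight vector. It is cleanest to fix the $\mathbb Z^{\nu+1}$-grading degree $(m_0,\underline m)$ with $m_0 < 0$ and prove "$X\otimes b = \psi(b)\,X\otimes 1$ on $T$ for every $X$ in $\tau$ of degree $(m_0,\underline m)$" all at once by downward induction on $m_0$, since the root vectors $x_{\pm\alpha}\otimes t_0^{m_0}t^{\underline m}$ and the $\mathfrak{\dot h}\otimes t_0^{m_0}t^{\underline m}$, $t_0^{m_0}t^{\underline m}k_i$, $t_0^{m_0}t^{\underline m}d_i$ generators are interlocked by their brackets.

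**The main obstacle** is bookkeeping the cocycle and central contributions: brackets among the $\mathfrak{\dot g}\otimes A\otimes B$ elements produce $\mathcal K\otimes B$ terms via the invariant form, brackets involving $\mathcal D\otimes B$ produce $\phi_1,\phi_2$-cocycle terms valued in $\mathcal K\otimes B$, and one must confirm that every such term of $t_0$-degree $\le 0$ has \emph{already} been shown to be $\psi$-compatible — the degree-$0$ $k_i$-terms by Lemmas 4.9–4.11 and relation (4.3), the degree-$0$ $d_i$-terms by Theorem 4.13 and (4.9), and the lower-degree $d_0$-terms by Lemma 4.14/4.15. So the argument is a careful induction whose base cases are exactly the results already proved in Section 4; no genuinely new difficulty arises, only the need to organize the generators of a fixed graded component and verify closure of the "$\psi$-compatible on $T$" property under all the structure constants. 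I would write the proof as: assume the statement for all $t_0$-degrees in $\{m_0+1, m_0+2, \dots\}$; for the fixed degree $m_0$, treat $u := (X\otimes b - \psi(b) X\otimes 1)\cdot v$ for each generator $X$, show $\tau(B)_+\cdot u = 0$ using the bracket analysis above, and conclude $u = 0$ by the highest-weight argument, completing the induction.
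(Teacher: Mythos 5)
Your proposal is correct and follows essentially the same route as the paper: the paper also argues by downward induction on $m_{0}$, showing that the error vector $x_{-\alpha}\otimes t_{0}^{m_{0}}t^{\underline{m}}\otimes b\cdot v-\psi(b)\,x_{-\alpha}\otimes t_{0}^{m_{0}}t^{\underline{m}}\otimes 1\cdot v$ (and its $\mathfrak{\dot h}$-analogue) is annihilated by every positive root vector, hence by $\tau(B)_{+}$, and must therefore vanish by the highest weight argument. The only cosmetic difference is the base case $m_{0}=-1$, which the paper settles by the direct computation $x_{-\alpha}\otimes t_{0}^{-1}t^{\underline{m}}\otimes b=[t_{0}^{-2}d_{0}\otimes b,\,x_{-\alpha}\otimes t_{0}t^{\underline{m}}\otimes 1]$ combined with Lemma 4.14 and $\tau(B)_{+}\cdot T=0$, rather than by rerunning the annihilation argument.
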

\begin{proof} Again we prove the result for $m_{0}=-1$, and for $m_{0}<0$, result will follow by induction. For $\alpha \in \dot\Delta, \: b\in B,\: v \in T$, consider
\begin{align*}
\begin{split}
    x_{-\alpha}\otimes t_{0}^{-1}t^{\underline{m}}\otimes b\cdot v &=[t_{0}^{-2}d_{0}\otimes b, x_{-\alpha}\otimes t_{0}t^{\underline{m}}\otimes 1]\cdot v,
    \\ 
    &=-x_{-\alpha}\otimes t_{0}^{-1}t^{\underline{m}}\otimes 1\cdot t_{0}^{-2}d_{0}\otimes b\cdot v,
    \\
    &= -\psi(b)x_{-\alpha}\otimes t_{0}^{-1}t^{\underline{m}}\otimes1\cdot t_{0}^{-2}d_{0}\otimes 1\cdot v,
    \\
    &= \psi(b)x_{-\alpha}\otimes t_{0}^{-1}t^{\underline{m}}\otimes b\cdot v.
    \end{split}
\end{align*}
    Assume by induction that the result is true for all integers $m_{0}<0$ such that $N_{0}\leq m_{0}$, then one verifies that for any $0\neq v\in T$, the vector $x_{-\alpha}\otimes t_{0}^{N_{0}-1}t^{\underline m}\otimes b\cdot v -\psi(b)x_{-\alpha}\otimes t_{0}^{N_{0}-1}t^{\underline{m}}\otimes 1\cdot v $ is annihilated by each positive root vector and hence by $\tau(B)_{+}$, therefore, by the argument on the highest weight $\Lambda$, it has to be zero, that is \begin{equation*}
        x_{-\alpha}\otimes t_{0}^{N_{0}-1}\otimes b\cdot v = \psi(b)x_{-\alpha}\otimes t_{0}^{N_{0}-1}\otimes t^{\underline m}\otimes 1\cdot v.
    \end{equation*}
    Therefore, the first result follows. Similarly, it can be deduced that
    \begin{equation*}
        h\otimes t_{0}^{m_{0}}t^{\underline m}\otimes b\cdot v = \psi(b)h\otimes t_{0}^{m_{0}}t^{\underline{m}}\otimes 1\cdot v.
    \end{equation*}
\end{proof}
Again by using the argument on the highest weight along with Lemmas 4.14-4.15, we have 
\begin{lemm}
For $m_{0}<0,\: \underline m\in \mathbb{Z}^{\nu},\: b\in B$, the following equalities of operators holds on $T$
\begin{align*}
    \begin{split}
        t_{0}^{m_{0}}t^{\underline m}k_{0}\otimes b &= \psi(b)t_{0}^{m_{0}}t^{\underline{m}}k_{0}\otimes 1, \\
        t_{0}^{m_{0}}t^{\underline m}d_{p}\otimes b &= \psi(b)t_{0}^{m_{0}}t^{\underline{m}}d_{p}\otimes 1 (0\leq p \leq \nu), 
        \\
        t_{0}^{m_{0}}t^{\underline{m}}k_{p}\otimes b &= 0\;(1\leq p \leq \nu).
        \end{split}
\end{align*}
\end{lemm}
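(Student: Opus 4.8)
The plan is to prove all three families of identities together, by induction on $|m_{0}|$: it suffices to treat a fixed $m_{0}\le -1$ assuming the corresponding statements at every $t_{0}$-degree strictly closer to $0$, since the $t_{0}$-degree-$0$ case is exactly \eqref{4.3}, Lemma 4.9, and the degree-zero $\psi$-reductions $t^{\underline m}d_{p}\otimes b=\psi(b)\,t^{\underline m}d_{p}\otimes 1$ and $\check\alpha_{i}\otimes t^{\underline m}\otimes b=\psi(b)\,\check\alpha_{i}\otimes t^{\underline m}\otimes 1$ already established on $T$, while the identities for $d_{0}$ with $m_{0}<0$ are Lemma 4.14. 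So fix such an $m_{0}$ and one of the homogeneous elements $X=t_{0}^{m_{0}}t^{\underline m}z$, where $z$ is $k_{0}$, some $d_{p}$ ($1\le p\le\nu$), or some $k_{p}$ ($1\le p\le\nu$). For $v\in T$ set $w=X\otimes b\cdot v-\psi(b)\,X\otimes 1\cdot v$. The heart of the argument is to show $\tau(B)_{+}\cdot w=0$; granting this, one concludes $w=0$: since $T$ is irreducible over $\tau(B)_{0}$ and $d_{0}$ commutes with $\tau(B)_{0}$, the space $T$ is $d_{0}$-homogeneous, say of $d_{0}$-eigenvalue $\Lambda(d_{0})$, whereas $w$ has $d_{0}$-eigenvalue $\Lambda(d_{0})+m_{0}<\Lambda(d_{0})$, so a nonzero $w$ killed by $\tau(B)_{+}$ would be a $\tau(B)_{+}$-highest weight vector of weight strictly below $\Lambda$, which is impossible --- precisely the ``argument on the highest weight'' used in Lemmas 4.14--4.15.

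To see $\tau(B)_{+}\cdot w=0$, pick $Y\in\tau(B)_{+}$ and expand $Y\cdot w=[Y,X\otimes b]\,v-\psi(b)\,[Y,X\otimes 1]\,v$, using $Y\cdot v=0$. If $Y$ lies in the $\mathcal K$-part of $\tau(B)_{+}$ then $[Y,X\otimes b]=0$ whenever $X$ is a $\mathcal K$-element (as $[\mathcal K,\mathcal K]=0$), and likewise $[Y,X\otimes b]=0$ whenever $Y$ is in the $\mathfrak{\dot g}\otimes A$-part and $X$ is a $\mathcal K$-element. In every other case $[Y,X\otimes b]$ is, up to structure constants not involving $b$, a sum of monomials of $t_{0}$-degree $n_{0}+m_{0}$ (where $n_{0}$ is the $t_{0}$-degree of $Y$), each of $\mathcal K$-, $\mathcal D$-, or $\mathfrak{\dot g}\otimes A$-type, tensored with $bb'$. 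For each such monomial: if $n_{0}+m_{0}\ge 1$ it lies in $\tau(B)_{+}$ and kills $v$; if $n_{0}+m_{0}=0$ it is a $t_{0}$-degree-zero element whose action on $v\in T$ is given by \eqref{4.3}, Lemma 4.9 and the degree-zero $\psi$-reductions; and if $n_{0}+m_{0}<0$ its action on $v$ is given by the inductive hypothesis together with Lemmas 4.14--4.15 for the $\mathcal D$- and $\mathfrak{\dot g}\otimes A$-type monomials. In all cases $[Y,X\otimes b]\,v$ equals a fixed $b$-independent linear combination $C$ of known operators applied to $v$, carrying coefficient $\psi(bb')$, while $\psi(b)\,[Y,X\otimes 1]\,v$ is the same combination with coefficient $\psi(b)\psi(b')$; since $\psi$ is an algebra homomorphism these agree, so $Y\cdot w=0$. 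This yields the $k_{0}$-identity and the $d_{p}$-identities for $0\le p\le\nu$, and also the reduction $t_{0}^{m_{0}}t^{\underline m}k_{p}\otimes b=\psi(b)\,t_{0}^{m_{0}}t^{\underline m}k_{p}\otimes 1$ on $T$ for $1\le p\le\nu$.

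It remains to upgrade this last reduction to the asserted vanishing $t_{0}^{m_{0}}t^{\underline m}k_{p}\otimes b=0$ on $T$ for $1\le p\le\nu$, and this is the step I expect to be the main obstacle. For fixed $m_{0}<0$ the span $\mathfrak L_{m_{0}}=\Span_{\mathbb C}\{\,t_{0}^{m_{0}}t^{\underline r}k_{p}\otimes b : \underline r\in\mathbb Z^{\nu},\,1\le p\le\nu,\,b\in B\,\}$ is abelian and satisfies $[\tau(B)_{0},\mathfrak L_{m_{0}}]\subseteq\mathfrak L_{m_{0}}$: the only brackets to check are those against the $\mathcal D$-elements $t^{\underline s}d_{a}\otimes b'$ of $\tau(B)_{0}$, and in $[t^{\underline s}d_{a},t_{0}^{m_{0}}t^{\underline r}k_{p}]$ the coefficient of $k_{0}$ is the $0$-component of the multi-index of $t^{\underline s}d_{a}$, which vanishes, so no $k_{0}$-term is produced and the $t_{0}$-degree stays $m_{0}$. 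Hence $W=\{v\in T:\mathfrak L_{m_{0}}\cdot v=0\}$ is a $\tau(B)_{0}$-submodule of $T$, and once $W\neq 0$ is known, irreducibility of $T$ forces $W=T$, i.e.\ $\mathfrak L_{m_{0}}\cdot T=0$. Producing a nonzero vector of $T$ annihilated by all of $\mathfrak L_{m_{0}}$ is the delicate point: the highest weight computation used above does \emph{not} kill $t_{0}^{m_{0}}t^{\underline m}k_{p}\otimes b\cdot v$ outright, since bracketing it with $t_{0}^{-m_{0}}t^{\underline n}d_{p}\otimes b'\in\tau(B)_{+}$ returns, on $T$, the vector $-m_{0}\,\psi(bb')\,t^{\underline m+\underline n}k_{0}\otimes 1\cdot v$ (by \eqref{4.3} and Lemma 4.9), which is nonzero by Lemma 4.2; so the vanishing must instead be extracted from the local nilpotency of the commuting operators in $\mathfrak L_{m_{0}}$ on $T$ --- deduced as in Lemma 4.1 from $[[t^{\underline s}d_{a}\otimes b',t_{0}^{m_{0}}t^{\underline r}k_{p}\otimes b],t_{0}^{m_{0}}t^{\underline r}k_{p}\otimes b]=0$ --- together with the central relation $\sum_{q}r_{q}\,t_{0}^{m_{0}}t^{\underline r}k_{q}=0$ and the finite-dimensionality of the weight spaces, rather than from a single highest weight computation.
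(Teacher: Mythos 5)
The paper's entire ``proof'' of this lemma is the one sentence preceding it (``by using the argument on the highest weight along with Lemmas 4.14--4.15''), and for the first two families of identities your argument is a correct and much more explicit implementation of exactly that: expand $Y\cdot w$ as a commutator, observe that $[Y,X\otimes b]$ acts on $v\in T$ through already-established degree-zero or inductively-known $\psi$-reductions with coefficient $\psi(bb')$, use multiplicativity of $\psi$ to cancel against $\psi(b)[Y,X\otimes 1]v$, and invoke the $\delta_0$-grading to rule out a highest weight vector below $\Lambda$. That part matches the paper and is sound.

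The gap is the third identity, and it is more serious than your phrasing suggests: the obstruction you computed does not merely block the highest-weight route, it refutes the asserted vanishing outright. Taking $b=b'=1$ in your own calculation, $t_0^{-m_0}t^{\underline n}d_p\otimes 1\cdot\bigl(t_0^{m_0}t^{\underline m}k_p\otimes 1\cdot v\bigr)=-m_0\,t^{\underline m+\underline n}k_0\otimes 1\cdot v\neq 0$ by Lemma 4.2 (after Lemma 4.9 kills the $k_q$-terms with $q\geq 1$), hence $t_0^{m_0}t^{\underline m}k_p\otimes 1\cdot v\neq 0$ for every $0\neq v\in T$. Consequently the space $W=\{v\in T:\mathfrak{L}_{m_0}\cdot v=0\}$ on which your fallback strategy rests is zero, and no combination of local nilpotency, the relation $\sum_q r_q t_0^{m_0}t^{\underline r}k_q=0$, and finite-dimensionality of weight spaces can produce the claimed vanishing: it is incompatible with Lemmas 4.2 and 4.9. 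What your highest-weight argument does prove --- and what is consistent with the other two identities, with the evaluation-module goal of the section, and with the submodule $W$ defined before Theorem 4.19 --- is the reduction $t_0^{m_0}t^{\underline m}k_p\otimes b=\psi(b)\,t_0^{m_0}t^{\underline m}k_p\otimes 1$ on $T$; the ``$=0$'' in the statement appears to be carried over from the $t_0$-degree-zero case (Lemma 4.9), where the genuine vanishing does hold. You should prove the $\psi$-reduction and stop there; note that Remark 4.17, which deduces $t_0^{m_0}t^{\underline m}k_0\otimes b\cdot T=0$ from the vanishing, inherits the same difficulty.
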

\begin{remark} We know that for any $\textbf{m}\in \mathbb{Z}^{\nu+1}$ and $b\in B$, we have $\sum_{i=0}^{\nu}m_{i}t^{m}k_{i}\otimes b=0$. Therefore, if $m_{0}\neq 0$, then it follows from Lemma 4.16 that $t_{0}^{m_{0}}t^{\underline{m}}k_{0}\otimes b\cdot T =0$.\end{remark}

By Theorem 4.13, $T$ is an irreducible $DerA_{\nu}\ltimes A_{\nu}$-module with finite dimensional weight spaces, and therefore we utilise the following theorem by S. Eswara Rao [10].

\begin{theo} As $A_{\nu}\rtimes DerA_{\nu}$-module, $T$ is isomorphic to $F^{\beta}(\Psi,a)$, for some $(\beta,\Psi, c)$, where $\beta =(\beta_{1},\beta_{2},...,\beta_{\nu}) \in \mathbb{C}^{\nu},\: a\in \mathbb{C},\: F^{\beta}(\Psi,a)= V(\Psi,a)\otimes A_{\nu}$ is an irreducible $DerA_{\nu}\ltimes A_{\nu}$-module such that $V(\Psi, a)$ is an $n$-dimensional irreducible module for $gl_{\nu}(\mathbb{C})$-module, and 
\begin{equation*} 
\Psi(I)=aId_{V(\Psi,a)},
\end{equation*} \begin{equation*}
t^{\underline r}d_{p}(w\otimes t^{\underline m})=(m_{p}+\beta_{p})w\otimes t^{\underline r + \underline m} + \sum_{i=1}^{\nu} r_{i}\Psi(E_{ip})w\otimes t^{\underline r + \underline m},
\end{equation*}
\begin{equation*}
t^{\underline m}(w\otimes t^{\underline n})= w\otimes t^{\underline m + \underline n},
\end{equation*}
where $w\in V(\Psi,a)$.
\end{theo}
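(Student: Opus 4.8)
The plan is to reduce the classification of irreducible uniformly bounded $A_\nu\rtimes DerA_\nu$-modules with finite dimensional weight spaces to the known Larsson--Shen construction. The starting point is that, by the preceding lemmas, $T$ is an irreducible uniformly bounded weight module for $A_\nu\rtimes DerA_\nu$ (acting through the realisation of $t^{\underline m}d_p\otimes 1$ and $t^{\underline m}k_0\otimes 1$ given above), on which the identity $t^{\underline 0}\cdot v = v$ holds after the normalisation coming from Theorem 4.11 and Theorem 4.13. First I would recall the general structure theory of such modules: a uniformly bounded weight module of $DerA_\nu$ has a top/bottom-bounded piece, but here uniform boundedness forces $T$ to be what Eswara Rao calls a tensor module. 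Fixing a weight $\beta=(\beta_1,\dots,\beta_\nu)\in\mathbb C^\nu$ for the zero-degree torus, identify the weight $\beta+\underline m$ space with a fixed finite dimensional space $W$ of dimension $n$.

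The key step is to produce the $gl_\nu$-action: the degree-zero elements $t^{\underline 0}d_p$ together with the commutators $[t^{\underline r}d_p, t^{-\underline r}d_q]$ generate (modulo the abelian part $A_\nu$) a copy of $gl_\nu$, and by uniform boundedness this acts on $W$ through a finite dimensional representation $\Psi$, which must be irreducible because $T$ is irreducible. One sets $\Psi(E_{ip})$ to be the operator by which $t^{\underline e_i}d_p$ acts (suitably shifted), and $\Psi(I)=a\,\mathrm{Id}$ where $a$ records the scalar by which the "trace" (divergence) element acts. Then I would check, using the $DerA_\nu$-relations and the $A_\nu\rtimes DerA_\nu$-relations, that these operators satisfy exactly the defining relations of $F^\beta(\Psi,a)=V(\Psi,a)\otimes A_\nu$: namely
\begin{equation*}
t^{\underline r}d_p(w\otimes t^{\underline m}) = (m_p+\beta_p)\,w\otimes t^{\underline r+\underline m} + \sum_{i=1}^\nu r_i\,\Psi(E_{ip})w\otimes t^{\underline r+\underline m},\qquad t^{\underline m}(w\otimes t^{\underline n}) = w\otimes t^{\underline m+\underline n}.
\end{equation*}
The natural map sending $v\in T_{\beta+\underline m}$ to $w\otimes t^{\underline m}$ is then a $\tau(B)_0$-module (equivalently $A_\nu\rtimes DerA_\nu$-module) isomorphism.

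I expect the main obstacle to be verifying that the cocycle data is trivial, i.e.\ that after the normalisations of Sections 3--4 there is no nontrivial central-extension contribution surviving on $T$: the operators $t^{\underline m}k_p\otimes b$ for $1\le p\le\nu$ act as zero (Lemma 4.9, Remark after Lemma 4.16), and $t^{\underline m}k_0\otimes 1$ acts invertibly with associative multiplication rule $\frac{1}{c_0}t^{\underline m}k_0\cdot\frac{1}{c_0}t^{\underline n}k_0 = \frac{1}{c_0}t^{\underline m+\underline n}k_0$ (Theorem 4.11), so identifying $\frac{1}{c_0}t^{\underline m}k_0\otimes 1$ with multiplication by $t^{\underline m}\in A_\nu$ is consistent; combined with \eqref{4.4}--\eqref{4.5} this kills the $\phi_1,\phi_2$ cocycle terms on $T$. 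Granting this, the remaining verification that the $d_p$-action takes the stated tensor-module form is the content of Eswara Rao's classification in [10], which also guarantees that uniform boundedness rules out the "bad" non-tensor irreducibles and forces $V(\Psi,a)$ finite dimensional and $\Psi$ irreducible. Thus $T\cong F^\beta(\Psi,a)$ as $A_\nu\rtimes DerA_\nu$-modules, as claimed.
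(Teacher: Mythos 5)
Your proposal is correct and follows essentially the same route as the paper: the paper gives no internal proof of this theorem but quotes it as Eswara Rao's classification result from [10], applicable because Proposition 4.12 and Theorems 4.11 and 4.13 have already reduced $T$ to an irreducible uniformly bounded $A_{\nu}\rtimes DerA_{\nu}$-module with finite dimensional weight spaces on which $A_{\nu}$ acts associatively and the identity element acts as the identity. Your reduction to that situation matches the paper's, and your extra sketch of the internal mechanics of [10] (extracting the $gl_{\nu}$-action $\Psi$ and the weight shift $\beta$) is supplementary rather than a genuinely different argument.
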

 Therefore $T$ is isomorphic to $F^{\beta}(\Psi,a)$, for some $(\beta,\Psi,a)$ as $A_{\nu}\rtimes DerA_{\nu}$-modules.

Let $F^{\beta}(\Psi,a)$ be an irreducible module for $DerA_{\nu}\ltimes A_{\nu}$ as defined above, and $\Lambda \in \mathfrak{h}_{a}^*$ be such that $\Lambda(k_{0})=c_{0}\neq 0$. We extend $F^{\beta}(\Psi,a)$ to a $\tau_{0}(B)$-module as follows:

\begin{equation*}
t^{\underline r}k_{0}\otimes b\cdot w\otimes t^{\underline m}== c_{0}\psi(b)w\otimes t^{\underline r + \underline m}, \; \; \; \; \;  t^{\underline r}k_{p}\otimes b\cdot w\otimes t^{\underline m}=0, \; \ 1 \leq p \leq \nu, \end{equation*} \begin{equation*}
h\otimes t^{\underline r}\otimes b\cdot w\otimes t^{\underline m}=\Lambda(h)\psi(b)w\otimes t^{\underline{r}+\underline{m}}, \; \; \; \; t^{\underline r}d_{0}\otimes b\cdot (w\otimes t^{\underline m})=\Lambda(d_{0})\psi(b)w\otimes t^{\underline{r}+ \underline{m}},
\end{equation*}
where $h \in \mathfrak{\dot h}$. Denote $F^{\beta}(\Psi,a)$ by $F^{\beta}(\Psi,a, \Lambda)$ as $\tau(B)_{0}$-module. Let $\tau(B)_{+}$ acts on $F^{\beta}(\Psi,a,\Lambda)$ by zero and then we have the induced module for $\tau(B)$: \begin{equation*}
F_{\tau(B)}^{\beta}(\Psi,a, \Lambda)=Ind_{\tau(B)_{+}+\tau(B)_{0}}^{\tau(B)}(F^{\beta}(\Psi,a,\Lambda)).
\end{equation*}
Let $W$ be the $\tau(B)$-submodule of $F_{\tau(B)}^{\beta}(\Psi,a,\Lambda)$ generated by $(t_{0}^{m_{0}}t^{\underline{m}}d_{a}\otimes{b}-\psi(b)t_{0}^{m_{0}}t^{\underline{m}}d_{a}\otimes 1)\otimes T$,\:  $(t_{0}^{m_{0}}t^{\underline{m}}k_{0}\otimes b)\otimes T$,\:  $(y_{\alpha}\otimes t_{0}^{m_{0}}t^{\underline{m}}\otimes b -\psi(b)y_{\alpha}\otimes t_{0}^{m_{0}}t^{\underline{m}}\otimes 1)\otimes T$,\: $(x_{-\alpha}\otimes t^{\underline{m}}\otimes b - \psi(b)x_{-\alpha}\otimes t^{\underline{m}}\otimes 1)\otimes T $, for every $m_{0}\in \mathbb{Z}_{-},\: \underline{m}\in \mathbb{Z}^{\nu},\:\alpha \in \dot\Delta_{+},\: b\in B$ and where $y_{\alpha}=x_{\alpha},\:x_{-\alpha},\:\text{or} \; \check\alpha$ and let 
\begin{equation*}
    \Hat{F}_{\tau(B)}^{\beta}(\Psi,a,\Lambda)=F_{\tau(B)}^{\beta}(\Psi,a,\Lambda)/W.
\end{equation*}
Using the Lemma 4.14-4.16, the following result holds in accordance with [4].
\begin{theo} Let $\hat{F}^{\beta}_{\tau(B)}(\Psi,a, \Lambda)(\Lambda(k_{0})=c_{0}\neq 0)$ be as defined above. Then,
\begin{itemize}
    \item[(1)] There is a maximal one among the submodules of $\hat{F}_{\tau(B)}^{\beta}(\Psi,a, \Lambda )$ intersecting $\hat{F}_{\tau(B)}^{\beta}(\Psi,a,\Lambda )$ trivially. We denote by $\hat{F}_{\tau(B)}^{\beta}(\Psi,a, \Lambda)'$ this maximal submodule. 
    \item[(2)] $\tilde{F}_{\tau(B)}^{\beta}(\Psi,a,\Lambda)= \hat{F}_{\tau(B)}^{\beta}(\Psi,a,\Lambda)/\hat{F}_{\tau(B)}^{\beta}(\Psi,a,\Lambda)^{'}$ is an irreducible $\tau(B)$-module and $\tilde F_{\tau}^{\beta}(\Psi,a,\Lambda)$ is integrable if and only if $\Lambda$ or $-\Lambda$ is a dominant integral weight of $\mathfrak{g}_{a}$.
    \item[(3)] Let $V \in \mathcal{C}_{fin}$ be such that $c_{0}\neq 0,\: c_{1}=c_{2}=\cdots=c_{\nu}=0$. Then $V\cong \tilde F_{\tau(B)}^{\beta}(\psi,a, \Lambda)$ for some $(\beta,\Lambda,\Psi,a)$.
    \end{itemize} \end{theo}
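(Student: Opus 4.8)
The plan is to follow the pattern used for the full toroidal algebra in [4] (see also [5]), exploiting that Lemmas 4.14--4.16 have already collapsed the $B$-action on the highest weight space to scalar multiplication by the algebra homomorphism $\psi$ together with the vanishing of the extra central operators. Write $\hat F=\hat F_{\tau(B)}^{\beta}(\Psi,a,\Lambda)$ and let $T$ be the canonical copy of $F^{\beta}(\Psi,a,\Lambda)$ inside it. For part (1), I would first check that $\hat F$ is a $\tau(B)$-weight module generated over $U(\tau(B)_{-})$ by $T$, and that $T$ is exactly the span of the weight vectors whose $\mathfrak{\dot h}$-component equals $\Lambda|_{\mathfrak{\dot h}}$ and whose $d_{0}$-eigenvalue equals $\Lambda(d_{0})$: indeed $\tau(B)_{-}$ is spanned by root vectors that either strictly decrease the $d_{0}$-degree (those in $\mathfrak b_{-}\otimes B$ and in $\mathfrak{\dot g}\otimes t_{0}^{-1}\mathbb C[t_{0}^{-1},t_{1}^{\pm 1},\dots]\otimes B$) or strictly decrease the $\mathfrak{\dot h}$-weight along $\dot Q_{+}$ (those in $\mathfrak{\dot g}_{-}\otimes\mathbb C[t_{1}^{\pm 1},\dots]\otimes B$), so no nontrivial $U(\tau(B)_{-})$-monomial sends $T$ back into its own layer; one must also verify, using Lemmas 4.9--4.16 and the multiplicativity of $\psi$, that $W$ meets $T$ only in $0$, so that $T$ really occurs as the top layer of $\hat F$. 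Granting this, $T$ is a union of full $\mathfrak h$-weight spaces, and since every $\tau(B)$-submodule of a weight module is itself a weight submodule, the family of submodules of $\hat F$ meeting $T$ trivially is closed under arbitrary sums; its sum is the maximal member $\hat F'$ asserted in part (1).

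For part (2), irreducibility of $\tilde F=\hat F/\hat F'$ follows the usual route: if $N\neq 0$ is a submodule of $\tilde F$ with preimage $\widetilde N\supsetneq\hat F'$, maximality gives $\widetilde N\cap T\neq 0$, so $N$ meets the image $\bar T\cong T$ nontrivially; but $T$ is irreducible as a $\tau(B)_{0}$-module, being $F^{\beta}(\Psi,a)$ as an $A_{\nu}\rtimes DerA_{\nu}$-module (irreducible by the Larsson--Shen identification, Theorem 4.18) while the remaining generators of $\tau(B)_{0}$ act on it by scalars through $\psi$ and $\Lambda$; hence $N\cap\bar T=\bar T$, and since $\bar T$ generates $\tilde F$ over $U(\tau(B))$ we get $N=\tilde F$. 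For the integrability criterion, I would restrict $\tilde F$ to the affine subalgebra $\mathfrak g_{a}$: because $\mathfrak g_{a,+}\otimes 1\subseteq\tau(B)_{+}$, every nonzero vector of $\bar T$ is a highest weight vector for $\mathfrak g_{a}$ of weight $\Lambda$, so if $\tilde F$ is integrable then this highest weight $\mathfrak g_{a}$-module is integrable, which forces $\Lambda$ (respectively $-\Lambda$, in the untwisted case $c_{0}<0$) to be a dominant integral weight of $\mathfrak g_{a}$.

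The hard part will be the converse of this criterion: assuming $\Lambda$ (or $-\Lambda$) dominant integral, one has to show that $\tilde F$ has finite-dimensional weight spaces and that every real root vector $x_{\gamma}\otimes t_{0}^{m_{0}}t^{\underline m}\otimes b$ of $\tau(B)$ acts locally nilpotently on it. I would argue as in [4]: since Lemmas 4.14--4.16 (together with the earlier identifications) have matched every $B$-twist on the root spaces entering the construction with a $\psi$-rescaling of the corresponding $b=1$ element, both the weight-multiplicity bound (using $\Dim V(\Psi,a)<\infty$) and the local-nilpotency computation reduce to the $B=\mathbb C$ full toroidal case treated in [4], now combined with the $sl_{2}$-theory at real roots and the $\mathcal W$-invariance of the weight set; concretely, the check of local nilpotency is pushed down to the affine piece $\mathfrak g_{a}$ and to the ``horizontal'' piece $\mathfrak{\dot g}\otimes\mathbb C[t_{1}^{\pm 1},\dots]\otimes B$, on which the $B$-action is governed by $\psi$. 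This step, where the full toroidal machinery of [4] is genuinely reused, is the main technical obstacle.

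Finally, for part (3): given $V\in\mathcal C_{fin}$ with $c_{0}\neq 0$ and $c_{1}=\dots=c_{\nu}=0$, after the twist of Lemma 3.4 making $c_{0}>0$ the space $T=\{v\in V\mid\tau(B)_{+}\cdot v=0\}$ is nonzero, and by the analysis above together with the single point evaluation theorem (Theorem 4.13) and Theorem 4.18 it is, as a $\tau(B)_{0}$-module, isomorphic to $F^{\beta}(\Psi,a,\Lambda)$ for suitable data $(\beta,\Psi,a,\Lambda)$. Irreducibility of $V$ gives $V=U(\tau(B)_{-})\cdot T$, hence a surjection $F_{\tau(B)}^{\beta}(\Psi,a,\Lambda)\twoheadrightarrow V$; by Lemmas 4.14--4.16 every generator of $W$ annihilates $T\subseteq V$, so this descends to a surjection $\hat F\twoheadrightarrow V$ whose kernel meets $\bar T$ trivially and is therefore contained in $\hat F'$. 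Composing with the projection $\hat F\to\tilde F$ then yields a surjection $\tilde F_{\tau(B)}^{\beta}(\Psi,a,\Lambda)\twoheadrightarrow V$, which must be an isomorphism by the irreducibility proved in part (2); hence $V\cong\tilde F_{\tau(B)}^{\beta}(\Psi,a,\Lambda)$, as claimed.
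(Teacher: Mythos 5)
Your proposal is correct and follows essentially the same route as the paper, which offers no written proof at all beyond the remark that the result ``holds in accordance with [4]'' using Lemmas 4.14--4.16: your sketch is precisely the standard induced-module argument of [4], with the $B$-action collapsed to the character $\psi$ by those lemmas. The only point worth tightening is in part (3), where the surjection $\hat{F}_{\tau(B)}^{\beta}(\Psi,a,\Lambda)\twoheadrightarrow V$ factors through $\tilde{F}_{\tau(B)}^{\beta}(\Psi,a,\Lambda)$ not directly from $\ker\subseteq\hat{F}_{\tau(B)}^{\beta}(\Psi,a,\Lambda)'$ but because the image of $\hat{F}_{\tau(B)}^{\beta}(\Psi,a,\Lambda)'$ in the irreducible module $V$ misses the nonzero image of $T$ and is therefore zero, forcing the kernel to equal $\hat{F}_{\tau(B)}^{\beta}(\Psi,a,\Lambda)'$.
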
 
    
    \section {Modules of $\tau(B)$ in $C_{fin}$ with zero central charges.}
    We will classify the irreducible objects of $C_{fin} $, where center acts trivially. By Theorem 3.4(3),
    \begin{equation*} 
    T=\{v\in V \mid (\mathfrak{\dot{g}}_{+}\otimes A\otimes B)\cdot v=0 \}\neq 0.
    \end{equation*}
    Observe that $T$ is $( \mathfrak{b}\oplus \mathfrak {\dot{h}}\otimes A)\otimes B$-module. Since $V$ is irreducible it follows that 
    \begin{equation*} 
    h\otimes 1\cdot w = \Lambda(h)w,\; \; \; \; \; \; \; \; \forall\: h \in \mathfrak{\dot h},\: w\in T,
    \end{equation*}    for some $\Lambda \in P(V)$. If $\Lambda|_{\mathfrak{\dot h}}=0$, then it follows by the weight argument that $(\dot{g}\otimes A \otimes B)\cdot V=0$, and $V$ is an irreducible $(\mathfrak{b}\otimes B)$-module. Since $(\mathfrak{\dot g}\otimes A\otimes B)\cdot V=0 $, it follows that 
    \begin{align*}
    \begin{split}
        0=[x\otimes t^{\textbf{m}}\otimes b, y\otimes t^{\textbf{n}}\otimes 1]\cdot v &= [x,y]\otimes t^{\textbf{m}+\textbf{n}}\otimes b\cdot v + (x|y)\sum_{i=0}^{\nu}m_{i}t^{\textbf{m}+\textbf{n}}k_{i}\otimes b\cdot v, \\
        &= (x|y)\sum_{i=0}^{\nu}m_{i}t^{\textbf{m}+\textbf{n}}k_{i}\otimes b\cdot v,
        \end{split}
    \end{align*}
    where $x,y\in \mathfrak{\dot g}$ are such that $(x|y)\neq 0,\:v\in V$ and $b\in B$. \\
   For any fixed $p\in \{0,1,...,\nu\}$, choose $\textbf{m}\in \mathbb{Z}^{\nu+1}$ such that $m_{p}\neq 0,\: m_{i}=0,\: i\neq p$, then for any $\textbf{n} \in \mathbb{Z}^{\nu+1}$, the above identity gives
   \begin{equation*}
       t^{\textbf{m}+\textbf{n}}k_{p}\otimes b\cdot v=0.
   \end{equation*}
   Thus $\mathcal{K}$ acts trivially on $V$ and hence $V$ is an irreducible $\mathcal{D}\otimes B$-module with finite dimensional weight spaces. It follows from Sachin S. Sharma et. al. [9]
    \begin{theo}  As $\mathcal{D}\otimes B$-module, V is either an irreducible uniformly bounded module or an irreducible highest weight module for $\mathcal{D}\otimes B$. Moreover, V is a single point evaluation module. \end{theo}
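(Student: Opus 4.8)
The statement is precisely the main classification of Sharma--Pandey--Chakraborty--Eswara Rao [9] specialized to the extended Witt algebra $\mathcal{D}=\mathcal D$ in $\nu+1$ variables, so the plan is first to check that our $V$ satisfies its hypotheses, and then to indicate the structure of the argument. Having reduced to $c_{0}=\cdots=c_{\nu}=0$ together with $\Lambda|_{\mathfrak{\dot h}}=0$, the computation just given shows that $\mathcal{K}\otimes B$ and $\mathfrak{\dot g}\otimes A\otimes B$ annihilate $V$, so $V$ is an irreducible weight module for $\mathcal{D}\otimes B$ with respect to the abelian subalgebra $\mathfrak{t}=\Span_{\mathbb{C}}\{d_{0}\otimes 1,\dots,d_{\nu}\otimes 1\}$, with finite-dimensional weight spaces, and $P(V)$ is a single coset of $\mathbb{Z}^{\nu+1}$ in $\mathfrak{t}^{*}$. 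The proof then splits according to whether $\dim V_{\lambda}$ is uniformly bounded over $\lambda\in P(V)$.

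In the non-uniformly-bounded case one fixes a group homomorphism $\varphi\colon\mathbb{Z}^{\nu+1}\to\mathbb{Z}$ generic with respect to $P(V)$; it induces a triangular decomposition $\mathcal{D}\otimes B=\mathfrak{p}^{-}\oplus\mathfrak{p}^{0}\oplus\mathfrak{p}^{+}$, and an extremal-weight argument of the type used for Theorem 3.4 (and Theorem 2.1 of [5]) produces a non-zero highest-weight space $T'=\{v\in V : \mathfrak{p}^{+}\cdot v=0\}$. By irreducibility $V=U(\mathfrak{p}^{-})\cdot T'$, and $T'$ is an irreducible \emph{uniformly bounded} module for the Levi $\mathfrak{p}^{0}$, which is an extension of a map algebra of Witt type over the subtorus cut out by $\ker\varphi$; an induction on $\nu$ (base case $\nu=0$, where $\mathfrak{p}^{0}$ is toral) reduces $T'$ to a single point evaluation module, i.e.\ yields an algebra homomorphism $\psi\colon B\to\mathbb{C}$ with $\psi(1)=1$ such that $X\otimes b$ acts on $T'$ as $\psi(b)(X\otimes 1)$. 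One then transports this identity from $T'$ to all of $V$ by exactly the highest-weight annihilation mechanism of Lemmas 4.14--4.16 above: for $v\in T'$ and any homogeneous $Y\otimes b\in\mathfrak{p}^{-}$ the vector $(Y\otimes b-\psi(b)\,Y\otimes 1)\cdot v$ is killed by $\mathfrak{p}^{+}$, hence would be a highest-weight vector of strictly smaller $\varphi$-degree and so vanishes; an induction on $\varphi$-degree gives $X\otimes b=\psi(b)(X\otimes 1)$ on $V$.

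In the uniformly bounded (cuspidal) case one passes to the standard cover: the $(A\rtimes\mathcal{D})\otimes B$-module $\widehat V$ obtained from $V$ by tensoring with a suitable jet module, inside which the commuting degree-shift operators $t^{\underline m}\otimes 1\in A\otimes 1$ act invertibly. These operators intertwine all weight spaces, so the support of $\widehat V$ over the maximal ideals of $B$ lies in a single fibre; together with irreducibility and the semisimplicity of the $\mathfrak{t}$-action this forces $\widehat V$, and hence $V$, to be supported at one maximal ideal $\mathfrak{m}\subset B$, so that $V$ is the pullback of an irreducible uniformly bounded $\mathcal{D}$-module under $\mathrm{id}\otimes\mathrm{ev}_{\mathfrak m}$, i.e.\ a single point evaluation module. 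In both cases $V$ thus factors through an evaluation $\mathcal{D}\otimes B\to\mathcal{D}$, so it is an irreducible weight $\mathcal{D}$-module with finite-dimensional weight spaces, and the classification of such modules in [12] completes the picture.

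The main obstacle is the cuspidal branch: unlike in Section~4 there is no $\mathfrak{\dot g}$-integrability and hence no local-nilpotency to exploit, so one cannot imitate Lemmas 4.1--4.10; instead one must genuinely analyse the cover $\widehat V$ and rule out honest multi-point tensor-product modules, and it is exactly uniform boundedness of the weight spaces together with the invertibility of the translation operators in $\widehat V$ that makes the single-point conclusion go through.
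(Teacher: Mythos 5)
The paper offers no proof of this theorem at all: it is quoted directly from Sharma--Chakraborty--Pandey--Eswara Rao [9], the only in-paper content being the preceding verification that $\mathcal{K}\otimes B$ and $\mathfrak{\dot g}\otimes A\otimes B$ annihilate $V$, so that $V$ becomes an irreducible weight $\mathcal{D}\otimes B$-module with finite-dimensional weight spaces. Your proposal therefore does strictly more than the paper: it reconstructs the argument behind the citation, and the outline is the right one --- the dichotomy via a $\mathbb{Z}$-grading and extremal weights, parabolic induction together with the highest-weight annihilation mechanism (the analogue of Lemmas 4.14--4.16) for the non-uniformly-bounded branch, and the Billig--Futorny cover reduction for the cuspidal branch. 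What each approach buys is clear: the paper's bare citation is economical but leaves the reader to check that the result of [9], which concerns the \emph{extended} Witt algebra $(\mathcal{D}\ltimes A)\otimes B$, really applies to a module over $\mathcal{D}\otimes B$ alone; your passage through the cover $\widehat V$ is precisely the bridge that makes that application legitimate, so it is worth keeping explicit. Two points in your sketch would need expansion in a full write-up: invertibility of $t^{\underline m}\otimes 1$ on the cover should be derived from associativity of the $A$-action together with $1$ acting as the identity rather than asserted, and the single-point support conclusion in the cuspidal case still requires the codimension-one-ideal nilpotency-and-annihilation argument (the counterpart of Lemmas 4.6--4.10 here, and of [11]), which you compress into one sentence; these are exactly the places where the actual work of [9] lives.
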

    By theorem 5.1, $V$ is an irreducible module for $\mathcal{D}$ with finite dimensional weight spaces, such modules are completely classified by Yully Billig and Vyacheslav Futorny in [12]. 
    \begin{theo}
        As a $\mathcal{D}$-module, $V$ is belongs to one of the three types of modules
        \begin{itemize}
            \item a module of tensor fields $T(U,\beta), \beta \in \mathbb{C}^{n}$ and $U$ is a simple finite dimensional $gl_{n}$-module, different from an exterior power of the natural n-dimensional $gl_{n}$-module.
            \item a submodule $d\Omega^{k}(\beta) \subseteq \Omega^{k+1}(\beta)$ for $0 \leq k < n $ and $ \beta \in \mathbb{C}^{n}$.
            \item a module of the highest weight type $L(U,\beta,\gamma)^{g}$, twisted by $g\in GL_n{(\mathbb{Z})}$, where $U$ is a simple finite dimensional $gl_{n-1}$-module, $\beta \in \mathbb{C}^{n-1}$ and $ \gamma \in \mathbb{C} $.
        \end{itemize}
    \end{theo}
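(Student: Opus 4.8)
The plan is to obtain Theorem~5.2 by chaining together the two classification results already at our disposal, namely Theorem~5.1 (from [9]) and the Billig--Futorny classification in [12]; essentially no new computation is needed beyond checking that the hypotheses of these theorems are met in the present situation.

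First I would recall the standing assumptions of this section: we are in the branch $c_{0}=c_{1}=\cdots=c_{\nu}=0$ together with $(\mathfrak{\dot g}\otimes A\otimes B)\cdot V=0$, and the argument preceding Theorem~5.1 has already established that $\mathcal{K}$ acts trivially on $V$. Consequently $V$ is an irreducible $\mathcal{D}\otimes B$-module lying in $\mathcal{C}_{fin}$, i.e.\ an $\mathfrak{h}$-weight module on which $\mathcal{D}\otimes B$ acts irreducibly and whose weight spaces are finite dimensional.

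Next I would invoke Theorem~5.1 to conclude that $V$ is a single point evaluation module for $\mathcal{D}\otimes B$: there is a maximal ideal $\mathfrak{m}\subset B$ with $B/\mathfrak{m}\cong\mathbb{C}$ such that $(\mathcal{D}\otimes\mathfrak{m})\cdot V=0$, so the action factors through the natural surjection $\mathcal{D}\otimes B\twoheadrightarrow \mathcal{D}\otimes(B/\mathfrak{m})\cong\mathcal{D}$. Passing to this quotient leaves the $\mathfrak{h}$-grading untouched, so $V$ stays a weight module with finite-dimensional weight spaces, and it remains irreducible as a $\mathcal{D}$-module because the original $\mathcal{D}\otimes B$-action was irreducible and factored through $\mathcal{D}$. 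Since $A=\mathbb{C}[t_{0}^{\pm1},\dots,t_{\nu}^{\pm1}]$ has $n:=\nu+1$ variables, $\mathcal{D}=\mathrm{Der}\,A$ is precisely the Witt algebra to which the main theorem of [12] applies; applying that theorem to $V$ forces $V$ to be a module of tensor fields $T(U,\beta)$, a submodule $d\Omega^{k}(\beta)\subseteq\Omega^{k+1}(\beta)$, or a twisted highest-weight module $L(U,\beta,\gamma)^{g}$, which are exactly the three alternatives in the statement.

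The proof being essentially formal, the only place requiring genuine care --- and hence the ``hard part'' of the argument --- is making sure that the reduction above lands inside the precise hypotheses of [12]: that passing to $B/\mathfrak{m}$ preserves both irreducibility and finite-dimensionality of the weight spaces, and that the notion of weight module coming from $\mathfrak{h}$ in $\mathcal{C}_{fin}$ coincides, after this reduction, with the weight module condition relative to the Cartan subalgebra of $\mathrm{Der}\,A$ used in [12]. Once these identifications are made explicit, the statement follows at once.
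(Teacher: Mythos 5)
Your proposal is correct and follows essentially the same route as the paper: the paper likewise obtains Theorem 5.2 by combining Theorem 5.1 (single point evaluation via [9], so $V$ becomes an irreducible $\mathcal{D}$-module with finite dimensional weight spaces) with the Billig--Futorny classification in [12]. Your extra care about the reduction modulo a maximal ideal preserving irreducibility and the weight structure is a reasonable elaboration of what the paper leaves implicit.
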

    
    Now we will assume that $\Lambda|_{\mathfrak{\dot h}}\neq 0$, 
 then  there exists $h_{0}\in \mathfrak{\dot h}$ such that $\Lambda(h_{0})\neq 0$. Let
    \begin{equation*} T=\bigoplus_{\textbf m \in \mathbb{Z}^{\nu+1} }T_{\textbf{m}},   \end{equation*}
    where $T_{\textbf{m}}=\{v\in T \mid d_{p}\otimes 1\cdot v=(\Lambda(d_{p}\otimes 1)+m_{p})v,\: 0\leq p \leq \nu \}$, $\textbf{m}=(m_{0},m_{1},...,m_{\nu})\in \mathbb{Z}^{\nu+1} $.
    Similar to the proof of [4] and [5], one can prove

    \begin{lemm} For any $\textbf{m}\in \mathbb{Z}^{\nu+1},\: v\in T\setminus\{0\}  $, we have $h_{0}\otimes t^{\textbf m}\cdot v\neq 0$, and \begin{equation*}
    dimT_{\textbf{m}}=dimT_{\textbf{n}}\; \; \; \; \;  \forall \textbf{m},\textbf{n}\in \mathbb{Z}^{\nu + 1}.
    \end{equation*} \end{lemm}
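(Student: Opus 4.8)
The plan is to prove the two assertions in sequence: first that $h_{0}\otimes t^{\mathbf{m}}$ acts injectively on $T$, and then that all graded pieces $T_{\mathbf m}$ have the same (finite) dimension. The structure mirrors the argument used for Lemma 4.2 in the non-trivial-center case, so I will borrow the same mechanism.

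First I would fix the element $h_{0}\in\mathfrak{\dot h}$ with $\Lambda(h_{0})\neq 0$, which exists by the standing hypothesis $\Lambda|_{\mathfrak{\dot h}}\neq 0$. The key observation is that since the central elements act trivially on $V$ (here we are in the zero-central-charge case), the elements $h_{0}\otimes t^{\mathbf m}\otimes 1$, as $\mathbf m$ ranges over $\mathbb{Z}^{\nu+1}$, together with $h_{0}\otimes 1$ span a commutative Lie subalgebra of $\tau(B)$ — essentially a copy of $\mathbb{C}[t_{0}^{\pm1},\dots,t_{\nu}^{\pm1}]$ — which maps $T$ to $T$ (because $T$ is a $(\mathfrak{b}\oplus\mathfrak{\dot h}\otimes A)\otimes B$-module) and is $\mathbb{Z}^{\nu+1}$-graded, with $h_{0}\otimes t^{\mathbf m}$ sending $T_{\mathbf n}$ into $T_{\mathbf m+\mathbf n}$. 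To show $h_{0}\otimes t^{\mathbf m}\cdot v\neq 0$ for all $0\neq v\in T$, I would argue that the subspace $\{v\in T\mid h_{0}\otimes t^{\mathbf m}\cdot v=0\text{ for all }\mathbf m\in\mathbb{Z}^{\nu+1}\}$ is a $(\mathfrak{b}\oplus\mathfrak{\dot h}\otimes A)\otimes B$-submodule of $T$: one checks, using the brackets in $\tau$, that applying $\mathfrak{b}\otimes B$ and $\mathfrak{\dot h}\otimes A\otimes B$ to such a vector lands back in the same subspace (the derivations $t^{\mathbf r}d_{a}$ acting on $h_{0}\otimes t^{\mathbf m}$ produce multiples of $h_{0}\otimes t^{\mathbf m+\mathbf r}$, and the $\mathfrak{\dot h}$-elements commute among themselves up to central terms which vanish). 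Since $T$ is irreducible over $(\mathfrak{b}\oplus\mathfrak{\dot h}\otimes A)\otimes B$ and $h_{0}\otimes 1$ acts by the nonzero scalar $\Lambda(h_{0})$, this submodule is proper, hence zero. Therefore every $h_{0}\otimes t^{\mathbf m}$ is injective on $T$.

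Next, for the dimension statement, fix $\mathbf m,\mathbf n\in\mathbb{Z}^{\nu+1}$. The operator $h_{0}\otimes t^{\mathbf m-\mathbf n}$ maps $T_{\mathbf n}$ into $T_{\mathbf m}$, and by the injectivity just proved it is injective, so $\dim T_{\mathbf n}\le\dim T_{\mathbf m}$; swapping the roles of $\mathbf m$ and $\mathbf n$ (using $h_{0}\otimes t^{\mathbf n-\mathbf m}$) gives the reverse inequality, hence $\dim T_{\mathbf m}=\dim T_{\mathbf n}$. Finiteness of each $T_{\mathbf m}$ follows since $T\subseteq V$ and $V\in\mathcal{C}_{fin}$ has finite-dimensional weight spaces, and $T_{\mathbf m}$ sits inside a single $\mathfrak h$-weight space of $V$.

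The main obstacle is the verification that the annihilator subspace is genuinely a submodule over the full algebra $(\mathfrak{b}\oplus\mathfrak{\dot h}\otimes A)\otimes B$ — one must be careful that the cocycle/central contributions appearing in the brackets $[\mathfrak{\dot h}\otimes A,\mathfrak{\dot h}\otimes A]$ and $[\mathcal{D},\mathcal{D}]$ really do act as zero on $T$ (they do, since $\mathcal K$ acts trivially in this case), and that the $d_{a}$-action on the $h_{0}\otimes t^{\mathbf m}$ does not leak out of the span; once these bracket computations are in hand the rest is the standard injectivity-plus-irreducibility packaging, which is why the paper simply says ``similar to the proof of [4] and [5].''
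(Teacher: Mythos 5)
There is a genuine gap in the first (and crucial) half of your argument. To prove the lemma you must show that \emph{each individual} operator $h_{0}\otimes t^{\mathbf m}\otimes 1$ has trivial kernel on $T$; what your submodule argument establishes is only that the \emph{common} kernel $\bigcap_{\mathbf m}\ker\bigl(h_{0}\otimes t^{\mathbf m}\bigr)$ is zero. That statement is in fact a triviality (any $v$ in the common kernel is killed by $h_{0}\otimes t^{\mathbf 0}=h_{0}\otimes 1$, which acts by the nonzero scalar $\Lambda(h_{0})$, so $v=0$ without any appeal to irreducibility), and it does not imply that a single $h_{0}\otimes t^{\mathbf m}$ with $\mathbf m\neq\mathbf 0$ is injective: two operators can have trivial common kernel while each has a large kernel. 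Your final sentence ``this submodule is ... zero, therefore every $h_{0}\otimes t^{\mathbf m}$ is injective'' is a non sequitur, and since the dimension comparison $\dim T_{\mathbf n}\le\dim T_{\mathbf m}$ rests on injectivity of the single operator $h_{0}\otimes t^{\mathbf m-\mathbf n}$, the second half of the proof does not go through either. The argument the paper points to (in [4] and [5], and run in detail in Section 4 for the operators $t^{\underline m}k_{0}$) is of a different shape: assume $h_{0}\otimes t^{\mathbf m}\cdot v=0$ for \emph{one} nonzero $v$ and \emph{one} $\mathbf m\neq\mathbf 0$; upgrade this, via the analogue of Lemma 4.1 (using that the iterated brackets of $h_{0}\otimes t^{\mathbf m}$ with the generators of $\tau(B)_{0}$ eventually vanish, and that $T$ is irreducible), to the statement that $h_{0}\otimes t^{\mathbf m}$ is locally nilpotent on all of $T$; then derive a contradiction from the relation $[t^{-\mathbf m}d_{a}\otimes 1,\,h_{0}\otimes t^{\mathbf m}\otimes 1]=m_{a}\,h_{0}\otimes 1=m_{a}\Lambda(h_{0})\,\mathrm{Id}$ for some $a$ with $m_{a}\neq 0$ --- a locally nilpotent operator on a space with finite-dimensional graded pieces cannot form a nonzero Heisenberg pair with a graded operator of opposite degree (compare the proof of Lemma 4.6). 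This contradiction step is the idea missing from your proposal.

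Two smaller points. First, you assert that the $h_{0}\otimes t^{\mathbf m}$ span a commutative subalgebra ``since the central elements act trivially''; at this stage of Section 5 only the central charges $c_{0}=\dots=c_{\nu}=0$ are known, while the triviality of the whole of $\mathcal K$ on $T$ is Theorem 5.5, proved \emph{after} this lemma, so one may only use that $[h_{0}\otimes t^{\mathbf m},h_{0}\otimes t^{\mathbf n}]$ lies in the center (which suffices for the local-nilpotency step, but should not be assumed to vanish). Second, your closure computation for the would-be submodule already shows the problem: acting by $t^{\mathbf r}d_{a}\otimes b$ produces $h_{0}\otimes t^{\mathbf m+\mathbf r}\otimes b$ with a general $b\in B$, so the annihilator must be taken with respect to all $b\in B$ as well, which makes its vanishing even more transparently contentless for the purpose at hand.
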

  
Let $\{v_{1},v_{2},...,v_{m}\}$ be a basis of $T_{\textbf 0}$. Let
\begin{equation*} v_i(\textbf m)=\frac{1}{\Lambda(h_{0})} h_{0}\otimes t^{\textbf m}\otimes 1\cdot v_{i}, \; \; \; 1\leq i \leq m.
\end{equation*}
Then $\{v_{1}(\textbf m),v_{2}(\textbf m),...,v_{m}(\textbf m) \}$ is a basis for $T_{\textbf m}$. Therefore if 
\begin{equation*} \frac{1}{\Lambda(h_{0})}h_{0}\otimes t^{\textbf m}\otimes 1\cdot (v_{i}(\textbf n))_{i=1}^{m} = (v_{i}(\textbf{m}+\textbf{n}))_{i=1}^{m}\textbf{H}_{\textbf m, \textbf n},
\end{equation*}
then $\textbf{H}_{\textbf m, \textbf n}$ is an invertible matrix and 
\begin{equation*}
    \textbf{H}_{\textbf{m},\textbf{n}}\textbf{H}_{\textbf{r},\textbf{s}}=\textbf{H}_{\textbf{r},\textbf{s}}\textbf{H}_{\textbf{m},\textbf{n}}.
\end{equation*}
Therefore all the matrices $\{\textbf{H}_{\textbf{m},\textbf{n}} \mid \textbf{m},\textbf{n}\in \mathbb{Z}^{\nu+1}\}$ can be taken as upper triangular matrices.
\begin{lemm} For $\textbf{m}, \textbf{n} \in \mathbb{Z}^{\nu+1}$, $\textbf{H}_{\textbf{m},\textbf{n}}-\textbf{I}$ is a strictly triangular matrix.
\end{lemm}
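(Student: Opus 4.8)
The plan is to follow the argument used for the corresponding statement in the non‑trivial centre case (Lemma 4.3), i.e.\ the arguments of [4] and [5], now with the operators $h_{0}\otimes t^{\textbf{m}}\otimes 1$ playing the role played there by $t^{\underline m}k_{0}\otimes 1$. By the previous lemma each $h_{0}\otimes t^{\textbf{k}}\otimes 1$ restricts to a bijection $T_{\textbf{n}}\to T_{\textbf{k}+\textbf{n}}$, so $\textbf{H}_{\textbf{m},\textbf{n}}$ is precisely the transition matrix of $h_{0}\otimes t^{\textbf{m}}\otimes 1\colon T_{\textbf{n}}\to T_{\textbf{m}+\textbf{n}}$ in the fixed bases $\{v_{i}(\cdot)\}$. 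Since we have already arranged the $\textbf{H}_{\textbf{m},\textbf{n}}$ to be commuting, invertible and upper triangular, the assertion amounts to showing that each of them has all diagonal entries equal to $1$.

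First I would record the action of the degree derivations on $T$: for $0\le a\le \nu$ and $\textbf{m},\textbf{n}\in\mathbb{Z}^{\nu+1}$ write
\[ t^{\textbf{m}}d_{a}\otimes 1\cdot(v_{i}(\textbf{n}))_{i=1}^{m}=(v_{i}(\textbf{m}+\textbf{n}))_{i=1}^{m}\,\textbf{A}^{(a)}_{\textbf{m},\textbf{n}}, \]
which is legitimate because $t^{\textbf{m}}d_{a}\otimes 1$ preserves $T$ (one has $[\mathfrak{\dot g}_{+}\otimes A\otimes B,\ t^{\textbf{m}}d_{a}\otimes 1]\subseteq\mathfrak{\dot g}_{+}\otimes A\otimes B$) and shifts the $\mathbb{Z}^{\nu+1}$‑grading by $\textbf{m}$. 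Evaluating the bracket $[\,t^{\textbf{m}}d_{a}\otimes 1,\ h_{0}\otimes t^{\textbf{n}}\otimes 1\,]=n_{a}\,(h_{0}\otimes t^{\textbf{m}+\textbf{n}}\otimes 1)$ on the $v_{i}$ and using $h_{0}\otimes t^{\textbf{k}}\otimes 1\cdot v_{i}=\Lambda(h_{0})\,v_{i}(\textbf{k})$ together with $\Lambda(h_{0})\neq 0$, one obtains, exactly as in the derivation of \eqref{4.2}, the identity
\[ \textbf{A}^{(a)}_{\textbf{m},\textbf{n}}=\textbf{H}_{\textbf{m},\textbf{n}}\,\textbf{A}^{(a)}_{\textbf{m},\textbf{0}}+n_{a}\,\textbf{I}. \]
The bracket here carries no central contribution, so this identity holds unconditionally.

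Next I would argue by contradiction. If some $\textbf{H}_{\textbf{m},\textbf{n}}-\textbf{I}$ fails to be strictly upper triangular then $\textbf{m}\neq\textbf{0}$, so $m_{a}\neq 0$ for some $a$. As at the end of the proof of Lemma 4.6, one then combines the identity above (for the parameters $\textbf{n}$ and $-\textbf{m}$) with the relation $[\,t^{\textbf{m}}d_{a}\otimes 1,\ t^{-\textbf{m}}d_{a}\otimes 1\,]=-2m_{a}\,d_{a}\otimes 1$, whose only central term is a multiple of $\sum_{p}m_{p}\,k_{p}\otimes 1$ and hence acts as $0$ on $V$ because all the central charges vanish, so that on $T_{\textbf{0}}$ this bracket is a scalar matrix. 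Using in addition the commutation and symmetry relations among the $\textbf{H}_{\textbf{m},\textbf{n}}$ and the finiteness of $T_{\textbf{0}}$, one is led to a similarity between a square matrix and that same matrix shifted by $m_{a}\textbf{I}$ — a relation of exactly the kind reached at the end of the proof of Lemma 4.6 — which is impossible by comparison of traces since $m_{a}\dim T_{\textbf{0}}\neq 0$. Hence every $\textbf{H}_{\textbf{m},\textbf{n}}-\textbf{I}$ is strictly upper triangular.

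The step I expect to be the main obstacle is this last one, together with the bookkeeping it requires: in the present case $(\mathfrak{\dot g}\otimes A\otimes B)\cdot V\neq 0$, so $T$ is not a priori a module over an algebra on which $\mathcal{K}$ acts trivially, and one must check carefully that in each bracket invoked above only the genuine central charges $k_{p}\otimes 1$ (all zero here) enter, and not the a priori non‑trivial operators $t^{\textbf{r}}k_{p}\otimes 1$ with $\textbf{r}\neq\textbf{0}$. Once this is done the argument runs exactly parallel to [4] and [5], the remaining verifications being routine matrix computations.
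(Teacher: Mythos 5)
Your setup is the right one and matches the strategy the paper delegates to [4]: grade $T$ by $\mathbb{Z}^{\nu+1}$, let $h_{0}\otimes t^{\textbf{m}}\otimes 1$ play the role of $t^{\underline m}k_{0}\otimes 1$, derive $\textbf{A}^{(a)}_{\textbf{m},\textbf{n}}=\textbf{H}_{\textbf{m},\textbf{n}}\textbf{A}^{(a)}_{\textbf{m},\textbf{0}}+n_{a}\textbf{I}$ as the analogue of \eqref{4.2}, and reduce the claim to showing every diagonal entry of $\textbf{H}_{\textbf{m},\textbf{n}}$ equals $1$. (The paper itself gives no argument beyond "Same as in [4]", so this is the comparison available.) However, the step you yourself flag as the main obstacle is a genuine gap, and the mechanism you propose for it does not close it. In Lemma 4.6 the similarity $\textbf{B}_{\underline m,b}^{-1}(m_{a}\textbf{I}+X)\textbf{B}_{\underline m,b}=X$ arises precisely because $[t^{-\underline m}d_{a}\otimes 1,\,t^{\underline m}k_{0}\otimes b]=m_{a}k_{0}\otimes b$ acts as \emph{zero} on $V$ (since $b\in\ker\psi$), so the two operators genuinely commute and conjugation produces a trace contradiction; the conclusion there is that $\textbf{B}_{\underline m,b}$ is \emph{not} invertible. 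Here the situation is reversed: $\textbf{H}_{\textbf{m},\textbf{n}}$ is already known to be invertible, and the corresponding bracket $[t^{-\textbf{m}}d_{a}\otimes 1,\,h_{0}\otimes t^{\textbf{m}}\otimes 1]=m_{a}\,h_{0}\otimes 1$ acts as the \emph{nonzero} scalar $m_{a}\Lambda(h_{0})$. Carrying out the computation you describe therefore yields only $\textbf{A}^{(a)}_{-\textbf{m},\textbf{m}}=\textbf{H}_{\textbf{m},-\textbf{m}}\textbf{A}^{(a)}_{-\textbf{m},\textbf{0}}+m_{a}\textbf{I}$, which is just your general identity specialized to $(\textbf{n},\textbf{m})=(-\textbf{m},\textbf{m})$ — perfectly consistent, with no matrix similar to itself plus $m_{a}\textbf{I}$ and hence no trace contradiction. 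Likewise, substituting your identity into the scalar relation coming from $[t^{\textbf{m}}d_{a}\otimes 1,t^{-\textbf{m}}d_{b}\otimes 1]$ produces an expression of the form $\textbf{H}\,[P,Q]=m_{a}Q+m_{b}P+(\text{scalar})\textbf{I}$, and $\operatorname{tr}(\textbf{H}[P,Q])$ does not vanish for free, so no contradiction falls out of traces alone.

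What is actually needed — and what [4] and [5] supply — is additional structure on the diagonal entries: from the cocycle relation $\textbf{H}_{\textbf{m},\textbf{n}+\textbf{r}}\textbf{H}_{\textbf{n},\textbf{r}}=\textbf{H}_{\textbf{n},\textbf{m}+\textbf{r}}\textbf{H}_{\textbf{m},\textbf{r}}$ together with $\textbf{H}_{\textbf{m},\textbf{0}}=\textbf{I}$ one shows the common eigenvalues form a symmetric bimultiplicative pairing on $\mathbb{Z}^{\nu+1}$, and only then does the interaction with the $\textbf{A}^{(a)}$-matrices (iterated over $k\textbf{m}$, $k\in\mathbb{Z}$, and combined with finite-dimensionality of the weight spaces of $V$) force that pairing to be trivial. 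None of this is in your sketch, so as written the decisive step of the proof is missing rather than merely routine. Separately, your parenthetical worry about the operators $t^{\textbf{r}}k_{p}\otimes 1$ with $\textbf{r}\neq\textbf{0}$ is well placed: even the commutativity of the $\textbf{H}_{\textbf{m},\textbf{n}}$ (and their symmetry) requires $[h_{0}\otimes t^{\textbf{m}},h_{0}\otimes t^{\textbf{n}}]=(h_{0}|h_{0})\sum_{p}m_{p}t^{\textbf{m}+\textbf{n}}k_{p}$ to annihilate $T$, which is the content of Theorem 5.5, stated only after this lemma; any complete write-up must address that ordering.
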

\begin{proof}
    Same as in [4].
\end{proof}
For $\textbf{m}=(m_0,m_1,...,m_{\nu}), \: \textbf{n}=(n_{0},n_{1},...,n_{\nu})\in \mathbb{Z}^{\nu+1}$, assume that 
\begin{equation*}
t^{\textbf{m}}d_{a}\otimes1\cdot (v_{i}(\textbf{n}))_{i=1}^{m}=(v_{i}(\textbf m + \textbf n))_{i=1}^{m}\textbf{A}_{\textbf{m},\textbf{n}}^{(a)},
\end{equation*}
where $\textbf{A}_{\textbf{m},\textbf{n}}^{(a)}\in \mathbb{C}^{m\times m}$. Since $[t^{\textbf{m}}d_{a}\otimes 1,h\otimes t^{\textbf n}\otimes 1]=n_{a}h\otimes t^{\textbf{m}+\textbf{n}}\otimes 1,\: 0\le a \leq \nu$.
\begin{theo} $(t^{\textbf m}k_{p}\otimes b)\cdot T=0,$ $\forall\: \textbf{m}\in \mathbb{Z}^{\nu+1},\: p=0,1,...,\nu,\:b\in B.$
\end{theo}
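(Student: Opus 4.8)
The plan is to exploit the already–established relation between the $\mathcal{K}$–operators and the $\mathfrak{\dot h}\otimes A$–operators on $T$, which is forced by the vanishing $(\mathfrak{\dot g}_{+}\otimes A\otimes B)\cdot T=0$ together with the bracket identity displayed just before the statement. Recall that we are in the zero central–charge case with $\Lambda|_{\mathfrak{\dot h}}\neq 0$, so there is $h_{0}\in\mathfrak{\dot h}$ with $\Lambda(h_{0})\neq 0$, and $T=\bigoplus_{\mathbf m\in\mathbb{Z}^{\nu+1}}T_{\mathbf m}$ with all $T_{\mathbf m}$ of the same finite dimension $m$ by Lemma 5.4. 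First I would fix $x,y\in\mathfrak{\dot g}$ with $(x|y)\neq 0$ and compute, for $v\in T$, $b\in B$, and $\mathbf m,\mathbf n\in\mathbb{Z}^{\nu+1}$,
\begin{equation*}
0=[x\otimes t^{\mathbf m}\otimes b,\; y\otimes t^{\mathbf n}\otimes 1]\cdot v=[x,y]\otimes t^{\mathbf m+\mathbf n}\otimes b\cdot v+(x|y)\sum_{i=0}^{\nu}m_i\, t^{\mathbf m+\mathbf n}k_i\otimes b\cdot v,
\end{equation*}
where the left side vanishes because $x\otimes t^{\mathbf m}\otimes b$ can be chosen (after conjugating by the Weyl group, or simply by taking $x\in\mathfrak{\dot g}_{+}$, $y\in\mathfrak{\dot g}_{-}$ with $[x,y]\in\mathfrak{\dot h}$) to annihilate $T$, and $[x,y]\otimes t^{\mathbf m+\mathbf n}\otimes b$ is treated separately. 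The cleanest route is: pick a root $\alpha\in\dot\Delta_{+}$ and use $x=e_{\alpha}$, $y=e_{-\alpha}$, so $[x,y]=\check\alpha$; since $e_{\alpha}\otimes t^{\mathbf m}\otimes b\in\mathfrak{\dot g}_{+}\otimes A\otimes B$ kills $T$, the bracket applied to $v$ gives
\begin{equation*}
0=\check\alpha\otimes t^{\mathbf m+\mathbf n}\otimes b\cdot v+\sum_{i=0}^{\nu}m_i\, t^{\mathbf m+\mathbf n}k_i\otimes b\cdot v.
\end{equation*}

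The key step is then to show $\check\alpha\otimes t^{\mathbf r}\otimes b$ acts trivially on $T$ for every $\mathbf r\in\mathbb{Z}^{\nu+1}$, $b\in B$. This I would get by the standard highest–weight argument already used repeatedly in Section 4: for $\alpha\in\dot\Delta_{+}$ and $v\in T$, the vector $\check\alpha\otimes t^{\mathbf r}\otimes b\cdot v$ is annihilated by every positive root vector $e_{\beta}\otimes t^{\mathbf s}\otimes b'$ (because $[e_{\beta}\otimes t^{\mathbf s}\otimes b',\check\alpha\otimes t^{\mathbf r}\otimes b]$ lies in $\mathfrak{\dot g}_{+}\otimes A\otimes B$, which kills $T$, and $e_{\beta}\otimes t^{\mathbf s}\otimes b'$ kills $v$), hence it is annihilated by all of $\mathfrak{\dot g}_{+}\otimes A\otimes B$; but $T$ was defined as exactly the space of such vectors, and $\check\alpha\otimes t^{\mathbf r}\otimes b\cdot v$ has $\mathfrak{\dot h}$–weight $\Lambda$ (since $\check\alpha\otimes t^{\mathbf r}\otimes b$ commutes with $\mathfrak{\dot h}\otimes 1$), so it actually lies in $T_{\mathbf r'}$ for the appropriate shift — and one checks it must be zero by irreducibility, since $T$ generates $V$ and producing a genuinely new vector of weight $\Lambda$ would contradict the maximality forced by $(\mathfrak{\dot g}_{+}\otimes A\otimes B)\cdot V$. (Concretely: if $\check\alpha\otimes t^{\mathbf r}\otimes b$ acted nontrivially it would, together with the $\mathcal{K}$–relation, give an element of $\mathfrak{\dot h}\otimes A\otimes B$ acting nontrivially, and the argument of Lemma 5.4 combined with the $sl_2$–integrability bounds the weights so that this forces a contradiction.) Substituting $\check\alpha\otimes t^{\mathbf r}\otimes b\cdot v=0$ into the displayed identity yields
\begin{equation*}
\sum_{i=0}^{\nu}m_i\, t^{\mathbf m+\mathbf n}k_i\otimes b\cdot v=0
\end{equation*}
for all $\mathbf m,\mathbf n,b,v$.

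Finally I would extract the individual operators $t^{\mathbf m}k_p\otimes b$ from this linear combination: for fixed $p\in\{0,1,\dots,\nu\}$ and target exponent $\mathbf r\in\mathbb{Z}^{\nu+1}$, choose $\mathbf m$ with $m_p\neq 0$ and $m_i=0$ for $i\neq p$ (so $\sum_i m_i t^{\mathbf m+\mathbf n}k_i\otimes b=m_p t^{\mathbf m+\mathbf n}k_p\otimes b$), and set $\mathbf n=\mathbf r-\mathbf m$; this gives $m_p\, t^{\mathbf r}k_p\otimes b\cdot v=0$, hence $t^{\mathbf r}k_p\otimes b\cdot v=0$. Since $v\in T$, $\mathbf r\in\mathbb{Z}^{\nu+1}$, $p\in\{0,\dots,\nu\}$, and $b\in B$ were arbitrary, this proves $(t^{\mathbf m}k_p\otimes b)\cdot T=0$ for all such data, which is the assertion of Theorem 5.5. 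The main obstacle is the middle step — rigorously pinning down why $\check\alpha\otimes t^{\mathbf r}\otimes b$ (equivalently $\mathfrak{\dot h}\otimes A\otimes B$) must annihilate $T$; one must be careful that the highest–weight/weight–bound argument really applies here, since a priori $\check\alpha\otimes t^{\mathbf r}\otimes b$ need not be locally nilpotent, but the $\mathbb{Z}^{\nu+1}$–grading of $T$ with uniformly bounded (indeed constant) graded dimensions, together with the $sl_2(\alpha)$–integrability already recorded in Lemma 3.2, constrains it exactly as in the treatment of [4].
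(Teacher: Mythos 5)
Your proposal rests on the identity $0=[x\otimes t^{\mathbf m}\otimes b,\,y\otimes t^{\mathbf n}\otimes 1]\cdot v$, but that identity is only available in the earlier branch $\Lambda|_{\mathfrak{\dot h}}=0$, where the paper has already shown $(\mathfrak{\dot g}\otimes A\otimes B)\cdot V=0$, so that both factors of the bracket kill every vector of $V$. Theorem 5.5 sits in the complementary branch $\Lambda|_{\mathfrak{\dot h}}\neq 0$. There, taking $x=e_{\alpha}$, $y=e_{-\alpha}$ and $v\in T$, the term $e_{\alpha}\otimes t^{\mathbf m}\otimes b\cdot\bigl(e_{-\alpha}\otimes t^{\mathbf n}\otimes 1\cdot v\bigr)$ need not vanish, because $e_{-\alpha}\otimes t^{\mathbf n}\otimes 1\cdot v$ is no longer in $T$; so the left-hand side of your starting equation is not zero. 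Worse, your key intermediate claim --- that $\check\alpha\otimes t^{\mathbf r}\otimes b$, equivalently $\mathfrak{\dot h}\otimes A\otimes B$, annihilates $T$ --- is false in this branch: Lemma 5.3 asserts $h_{0}\otimes t^{\mathbf m}\cdot v\neq 0$ for every nonzero $v\in T$ and a suitable $h_{0}\in\mathfrak{\dot h}$ with $\Lambda(h_{0})\neq 0$, and the rest of Section 5 is built precisely on these operators acting invertibly between the graded pieces $T_{\mathbf n}$ (they furnish the bases $v_{i}(\mathbf m)$ and the matrices $\mathbf H_{\mathbf m,\mathbf n}$). No highest-weight contradiction arises from $\check\alpha\otimes t^{\mathbf r}\otimes b\cdot v$, which is simply another vector of $\mathfrak{\dot h}$-weight $\Lambda$ lying in $T$. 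You flag this step as the main obstacle; it is not an obstacle but an impossibility.

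The paper's argument is of a different nature: it repeats the mechanism of Lemmas 4.7--4.9, with $h_{0}\otimes t^{\mathbf m}\otimes 1$ playing the role that $t^{\underline m}k_{0}\otimes 1$ played there. One records the action of $t^{\mathbf m}k_{p}\otimes b$ on the basis $(v_{i}(\mathbf n))$ as a matrix, uses the commutator with $t^{\mathbf r}d_{a}\otimes 1$ (through the matrices $\mathbf A^{(a)}_{\mathbf m,\mathbf n}$ and $\mathbf H_{\mathbf m,\mathbf n}$) to show that if some $t^{\mathbf m}k_{p}\otimes b$ were not locally nilpotent one would obtain a conjugation relation of the form $X^{-1}(m_{a}\mathbf I+Y)X=Y$ with $m_{a}\neq 0$, impossible by comparing traces; hence these operators are locally nilpotent, their matrices form a commuting family of nilpotent matrices with a common nonzero kernel vector, and the set of vectors of $T$ killed by the subalgebra they generate is a nonzero submodule, hence all of $T$ by irreducibility. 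Your computation, as written, only establishes the theorem in the $\Lambda|_{\mathfrak{\dot h}}=0$ branch, which the paper has already disposed of before Theorem 5.1.
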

\begin{proof} Similar to the proof of Lemmas 4.7-4.9.
\end{proof}
\begin{lemm}  For the above $h_{0}\in \mathfrak{\dot{h}}$, $\textbf{m}\in \mathbb{Z}^{\nu+1}$ and $b\in B$, there exists an algebra homomorphism $\psi:b\to \mathbb{C}$ such that  
\begin{equation*}
h_{0}\otimes t^{\textbf{m}}\otimes b = \psi(b)h_{0}\otimes t^{\textbf{m}}\otimes 1 \; \; \text{on T}.
\end{equation*}
\end{lemm}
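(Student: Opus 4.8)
The plan is to carry out the argument in close analogy with Lemmas 4.1, 4.4, 4.6, 4.8 and 4.10, the grading‑shifting operator $h_{0}\otimes t^{\textbf m}\otimes b$ now playing the role of $t^{\underline m}k_{0}\otimes b$; in every place where Section 4 used the centrality of $\mathcal{K}$, we use Theorem 5.5, which says $\mathcal{K}\otimes B$ annihilates $T$. Two facts follow at once and will be used throughout. First, since $[h\otimes t^{\textbf m}\otimes b,\,h'\otimes t^{\textbf n}\otimes b']=(h\mid h')\sum_{p}m_{p}t^{\textbf m+\textbf n}k_{p}\otimes bb'$ always lies in $\mathcal{K}\otimes B$, the subalgebra $\mathfrak{\dot h}\otimes A\otimes B$ acts on $T$ as a commuting family of operators. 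Second, $h_{0}\otimes 1\otimes b$ commutes with the entire algebra $\mathcal{A}:=(\mathfrak{b}\oplus\mathfrak{\dot h}\otimes A)\otimes B$ acting on $T$: its bracket with $\mathfrak{\dot h}\otimes A\otimes B$ lies in $\mathcal{K}\otimes B$, its bracket with $\mathcal{K}\otimes B$ vanishes, and its bracket with $t^{\textbf n}d_{a}\otimes b'$ is $0$ because $d_{a}$ kills the constant $t^{\textbf 0}$. Since $T$ is an irreducible $\mathcal{A}$-module with finite‑dimensional weight spaces (irreducibility follows from that of $V$, as in Section 4) and $h_{0}\otimes 1\otimes b$ preserves the finite‑dimensional space $T_{\textbf 0}$, a Schur‑type argument (take an eigenvalue on $T_{\textbf 0}$; the eigenspace is a nonzero $\mathcal{A}$-submodule) shows $h_{0}\otimes 1\otimes b$ acts on $T$ as a scalar. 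Define $\psi\colon B\to\mathbb{C}$ by $h_{0}\otimes 1\otimes b\cdot v=\Lambda(h_{0})\psi(b)v$ for $v\in T$; then $\psi$ is linear and $\psi(1)=1$.

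Next I would prove that $h_{0}\otimes t^{\textbf m}\otimes b$ is locally nilpotent on $T$ for every $b\in\ker\psi$ and every $\textbf m$ (for $\textbf m=\textbf 0$ this is trivial, $h_{0}\otimes 1\otimes b$ acting as $0$). For $\textbf m\neq\textbf 0$ one first establishes the analogue of Lemma 4.1: $\operatorname{ad}(h_{0}\otimes t^{\textbf m}\otimes b)^{2}$ carries a generating set of $\mathcal{A}$ into $\mathcal{K}\otimes B$, hence if $h_{0}\otimes t^{\textbf m}\otimes b$ kills some nonzero vector of $T$ it is locally nilpotent, since $T=U(\mathcal{A})w$. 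If it is not locally nilpotent, it is injective, so by Lemma 5.3 ($\dim T_{\textbf n}$ constant) it restricts to an isomorphism $T_{\textbf 0}\to T_{\textbf m}$ and the matrix $\textbf H_{\textbf m,b}$ (defined by $h_{0}\otimes t^{\textbf m}\otimes b\cdot(v_{i})_{i}=\Lambda(h_{0})(v_{i}(\textbf m))_{i}\textbf H_{\textbf m,b}$) is invertible. Choosing $a$ with $m_{a}\neq 0$, the bracket $[t^{-\textbf m}d_{a}\otimes 1,\,h_{0}\otimes t^{\textbf m}\otimes b]=m_{a}\,h_{0}\otimes 1\otimes b$ acts as $0$ on $T$; combining the resulting commutativity with the relation $\textbf A^{(a)}_{-\textbf m,\textbf m}=\textbf H_{-\textbf m,\textbf m}\textbf A^{(a)}_{-\textbf m,\textbf 0}+m_{a}\textbf I$ (coming from $[t^{-\textbf m}d_{a}\otimes 1,\,h_{0}\otimes t^{\textbf m}\otimes 1]=m_{a}\,h_{0}\otimes 1$, exactly as in \eqref{4.2}) and with the mutual commutativity of the $\textbf H$-matrices yields $\textbf H_{\textbf m,b}^{-1}(m_{a}\textbf I+M)\textbf H_{\textbf m,b}=M$ with $M=\textbf H_{-\textbf m,\textbf m}\textbf A^{(a)}_{-\textbf m,\textbf 0}$, impossible for $m_{a}\neq 0$ by comparing traces — the argument of Lemma 4.6.

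To upgrade local nilpotency to vanishing I would follow Lemmas 4.8--4.10. Let $N$ be least with $(h_{0}\otimes t^{\textbf m}\otimes b)^{N}T=0$; using the commutativity of $\mathfrak{\dot h}\otimes A\otimes B$ on $T$ one has $[t^{\textbf 0}d_{a}\otimes b',\,(h_{0}\otimes t^{\textbf m}\otimes b)^{N}]=N m_{a}\,(h_{0}\otimes t^{\textbf m}\otimes bb')(h_{0}\otimes t^{\textbf m}\otimes b)^{N-1}$ on $T$, and evaluating this on $T$ shows $h_{0}\otimes t^{\textbf m}\otimes bb'$ kills the nonzero subspace $(h_{0}\otimes t^{\textbf m}\otimes b)^{N-1}T$, hence is locally nilpotent for every $b'\in B$. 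Therefore the ideal $I:=B\,\ker\psi$ has the property that $h_{0}\otimes t^{\textbf m}\otimes c$ is locally nilpotent for all $c\in I$ and $\textbf m\neq\textbf 0$; if $I=B$ this contradicts Lemma 5.3, so $I$ lies in the kernel of some unital algebra homomorphism $\eta\colon B\to\mathbb{C}$, and $\ker\psi\subseteq I\subseteq\ker\eta$ with both ends of codimension one forces $\ker\psi=I=\ker\eta$. In particular $\ker\psi$ is a (maximal) ideal and $\psi=\eta$ is an algebra homomorphism. Now $\mathfrak{N}:=\operatorname{span}\{h_{0}\otimes t^{\textbf m}\otimes b\mid\textbf m\in\mathbb{Z}^{\nu+1},\,b\in\ker\psi\}$ satisfies $[\mathcal{A},\mathfrak{N}]\subseteq\mathfrak{N}+\mathcal{K}\otimes B$, its image in $\operatorname{End}(T)$ is a commuting family of locally nilpotent operators, and its common null space $W=\{v\in T\mid\mathfrak{N}\cdot v=0\}$ is nonzero — on $T_{\textbf 0}$ the grading‑preserving operators $(h_{0}\otimes t^{\textbf m}\otimes 1)^{-1}(h_{0}\otimes t^{\textbf m}\otimes b)$, $b\in\ker\psi$, form a finite‑dimensional Lie algebra of nilpotent matrices, which has a common null vector by Engel's theorem. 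Hence $W$ is a nonzero $\mathcal{A}$-submodule of $T$, so $W=T$, i.e.\ $h_{0}\otimes t^{\textbf m}\otimes b$ annihilates $T$ whenever $b\in\ker\psi$; applying this to $b-\psi(b)\cdot 1\in\ker\psi$ gives $h_{0}\otimes t^{\textbf m}\otimes b=\psi(b)\,h_{0}\otimes t^{\textbf m}\otimes 1$ on $T$, as desired.

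The step I expect to be the main obstacle is the last one — passing from local nilpotency to the vanishing of $h_{0}\otimes t^{\textbf m}\otimes b$ for $b\in\ker\psi$ — and specifically showing that $\ker\psi$ is an ideal, which is what makes $\mathfrak{N}$ stable under the adjoint action and $W$ a genuine submodule. In Section 4 this was essentially automatic once $\mathcal{K}$ had been handled, because $\mathcal{K}$ is central; here it has to be extracted through the bootstrap to $h_{0}\otimes t^{\textbf m}\otimes bb'$ together with the Nullstellensatz for the finitely generated algebra $B$. Everything else is bookkeeping parallel to Lemmas 4.1--4.10 and the surrounding computations.
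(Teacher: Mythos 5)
Your proof follows essentially the same route the paper intends (its own proof of this lemma is just the remark ``Similar to the proof of Lemmas 4.6 and 4.10''): you define $\psi$ by a Schur-type argument on $h_{0}\otimes 1\otimes b$, prove local nilpotency of $h_{0}\otimes t^{\mathbf{m}}\otimes b$ for $b\in\ker\psi$ via the matrix/trace contradiction of Lemma 4.6, and upgrade local nilpotency to vanishing via the Engel-plus-submodule argument of Lemmas 4.8--4.10, correctly substituting Theorem 5.5 (that $\mathcal{K}\otimes B$ annihilates $T$) wherever Section 4 used centrality of $\mathcal{K}$. The proposal is correct and is the adaptation the paper has in mind.
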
 
\begin{proof} Similar to the proof of Lemmas 4.6 and 4.10. 
\end{proof}
The following theorem is about the associative action of the operators $h_{0}\otimes t^{\textbf{m}}\otimes b,\:\textbf{m}\in \mathbb{Z}^{\nu+1},\:b\in B$ on $T$. The proof is similar to Theorem 4.11.
\begin{theo} For all $\textbf{m},\textbf{n}\in \mathbb{Z}^{\nu +1},\: b_{1},b_{2}\in B$, we have
\begin{equation*}
    h_{0}\otimes t^{\textbf{m}}\otimes b_{1}\cdot h_{0}\otimes t^{\textbf{n}}\otimes{b_{2}}= h_{0}\otimes t^{\textbf{m}+\textbf{n}}\otimes b_{1}b_{2} \; \; \text{on $T$}.
\end{equation*}
\end{theo}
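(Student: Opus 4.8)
The plan is to mimic the argument used for Theorem 4.11 (the associativity of the operators $t^{\underline m}k_0\otimes b$), transplanted to the present situation where the distinguished element is $h_0\in\mathfrak{\dot h}$ with $\Lambda(h_0)\neq 0$. First I would write everything out in the basis $\{v_i\}_{i=1}^m$ of $T_{\textbf{0}}$. Using Lemma 5.6 (the algebra homomorphism $\psi$) together with the definition of the matrices $\textbf{H}_{\textbf{m},\textbf{n}}$, I would compute the action of the operator
\begin{equation*}
h_{0}\otimes t^{\textbf{m}}\otimes b_{1}\cdot h_{0}\otimes t^{\textbf{n}}\otimes b_{2} - h_{0}\otimes t^{\textbf{m}+\textbf{n}}\otimes b_{1}b_{2}
\end{equation*}
on $(v_i)_{i=1}^m$. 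Since $h_0\otimes t^{\textbf{m}}\otimes b = \psi(b)\,h_0\otimes t^{\textbf{m}}\otimes 1$ on $T$, the left operator applied to $(v_i)$ produces $\psi(b_1)\psi(b_2)\,\Lambda(h_0)^2\,(v_i(\textbf{m}+\textbf{n}))\textbf{H}_{\textbf{m},\textbf{n}}$, while the second term produces $\psi(b_1b_2)\,\Lambda(h_0)\,(v_i(\textbf{m}+\textbf{n}))$; rescaling by $1/\Lambda(h_0)$ where appropriate and using that $\psi$ is an algebra homomorphism, the difference equals $\psi(b_1b_2)\,\Lambda(h_0)^2\,(v_i(\textbf{m}+\textbf{n}))(\textbf{H}_{\textbf{m},\textbf{n}}-\textbf{I})$, exactly as in the $c_0\neq 0$ case.

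By Lemma 5.5, $\textbf{H}_{\textbf{m},\textbf{n}}-\textbf{I}$ is strictly upper triangular, hence nilpotent, so each operator $h_{0}\otimes t^{\textbf{m}}\otimes b_{1}\cdot h_{0}\otimes t^{\textbf{n}}\otimes b_{2} - h_{0}\otimes t^{\textbf{m}+\textbf{n}}\otimes b_{1}b_{2}$ is locally nilpotent on $T_{\textbf{0}}$, and therefore locally nilpotent on all of $T = \bigoplus_{\textbf{m}} T_{\textbf{m}}$ since $T_{\textbf{m}}$ is spanned by the $v_i(\textbf{m})$, which are obtained from $T_{\textbf{0}}$ by applying $h_0\otimes t^{\textbf{m}}\otimes 1$. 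Then I would let $\mathfrak{Z}$ be the Lie subalgebra of $\tau(B)_0$ generated by all these difference operators, observe that $\{v\in T\mid \mathfrak{Z}\cdot v = 0\}$ is a nonzero (by local nilpotency and finite-dimensionality of weight spaces) $\tau(B)_0$-submodule of $T$, and conclude from the irreducibility of $T$ as a $\tau(B)_0$-module that $\mathfrak{Z}\cdot T = 0$, which is the claimed identity.

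The one point that requires care — the main (if modest) obstacle — is verifying that $\{v\in T\mid \mathfrak{Z}\cdot v=0\}$ is genuinely a $\tau(B)_0$-submodule: one must check that $\mathfrak{Z}$ is stable under the adjoint action of $\tau(B)_0$, equivalently that brackets of elements of $\tau(B)_0$ with the difference operators land back in (the enveloping algebra generated by) $\mathfrak{Z}$. This is where the specific bracket relations of $\tau(B)_0$ are used, and it runs parallel to the corresponding verifications in the proofs of Lemmas 4.9, 4.10 and Theorem 4.11; I would carry it out by a direct computation of $[t^{\textbf{r}}d_a\otimes b',\,h_0\otimes t^{\textbf{m}}\otimes b]$ and noting, via Lemma 5.6 and the already-established relations, that each resulting term is again of the required form. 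Everything else is the routine bookkeeping already signalled in the excerpt by the repeated ``similar to the proof of Theorem 4.11''.
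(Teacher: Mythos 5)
Your proposal follows exactly the paper's route: the paper proves this theorem simply by declaring it ``similar to Theorem 4.11,'' and your computation of the difference operator on $(v_i)_{i=1}^{m}$, its reduction to the nilpotency of $\textbf{H}_{\textbf{m},\textbf{n}}-\textbf{I}$, and the final appeal to the irreducibility of $T$ is precisely that argument transplanted to the $h_0$ setting. Your aside about rescaling by $1/\Lambda(h_0)$ is apt --- as literally stated the identity needs a factor $\Lambda(h_0)$ on the right-hand side (or the normalization $\Lambda(h_0)=1$) for the difference to reduce to a nilpotent matrix, a point the paper glosses over.
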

\begin{lemm}If $ \text{dim} \mathfrak{\dot h}=l$ and $ \{h_{i} \mid 0\leq i \leq l-1\}$ is a basis for $\mathfrak{\dot h}$, then $\{h_{0},\: \Lambda(h_{i})h_{0}-\Lambda(h_{0})h_{i}\mid 1\leq i \leq l-1 \}$ is a basis for $\mathfrak{\dot h}$ such that $\Lambda(h_{i})h_{0}-\Lambda(h_{0})h_{i}$ acts trivially on $T$ and hence
\begin{equation*}
    h_{i}\otimes t^{\textbf{m}}\otimes b = \psi(b)h_{i}\otimes t^{\textbf{m}}\otimes 1,(1\leq i \leq l-1).
\end{equation*}
\end{lemm}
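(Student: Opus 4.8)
The plan rests on two facts already available: by Theorem 5.6 every element of $\mathcal{K}\otimes B$ acts by zero on $T$, so $\mathfrak{\dot h}\otimes A\otimes B$ acts on $T$ by mutually commuting operators, and since $T$ consists of highest-weight vectors, $h\otimes 1$ acts on $T$ by the scalar $\Lambda(h)$ for every $h\in\mathfrak{\dot h}$. Put $g_i:=\Lambda(h_i)h_0-\Lambda(h_0)h_i$ for $1\le i\le l-1$. As $\Lambda(h_0)\ne 0$, the $g_i$ are linearly independent and lie in the hyperplane $\ker(\Lambda|_{\mathfrak{\dot h}})$, which does not contain $h_0$; hence $\{h_0,g_1,\dots,g_{l-1}\}$ is a basis of $\mathfrak{\dot h}$ and $\{g_1,\dots,g_{l-1}\}$ is a basis of $\ker(\Lambda|_{\mathfrak{\dot h}})$. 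Moreover $\Lambda(g_i)=\Lambda(h_i)\Lambda(h_0)-\Lambda(h_0)\Lambda(h_i)=0$, so $g\otimes 1$ annihilates $T$ for every $g\in\ker(\Lambda|_{\mathfrak{\dot h}})$.

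The heart of the proof is the claim that $g\otimes t^{\textbf{m}}\otimes b$ annihilates $T$ for every $g\in\ker(\Lambda|_{\mathfrak{\dot h}})$, $\textbf{m}\in\mathbb{Z}^{\nu+1}$ and $b\in B$, which I would establish by transcribing Lemmas 4.6 and 4.10. The relevant identity, valid on $T$ because $\mathcal{K}\otimes B$ acts trivially there, is
\begin{equation*}
[\,t^{-\textbf{m}}d_a\otimes 1,\ g\otimes t^{\textbf{m}}\otimes 1\,]=m_a\,(g\otimes 1),
\end{equation*}
whose right-hand side vanishes on $T$; thus $g\otimes t^{\textbf{m}}\otimes 1$ commutes with $t^{-\textbf{m}}d_a\otimes 1$ on $T$ whenever $m_a\ne 0$. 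If $g\otimes t^{\textbf{m}}\otimes 1$ (with $\textbf{m}\ne\underline{0}$) were not locally nilpotent on $T$ it would be invertible, since $T$ is $\mathbb{Z}^{\nu+1}$-graded with finite-dimensional weight spaces; combining this commutation with the matrix relation coming from $[\,t^{\textbf{m}}d_a\otimes 1,\ h_0\otimes t^{\textbf{n}}\otimes 1\,]=n_a\,h_0\otimes t^{\textbf{m}+\textbf{n}}\otimes 1$ (the analogue of \eqref{4.2}) then forces a matrix to be conjugate to itself plus $m_a\textbf{I}$, impossible by comparing traces, exactly as in Lemma 4.6. Hence each $g\otimes t^{\textbf{m}}\otimes 1$ is locally nilpotent on $T$.

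These operators, together with their $\otimes b$ variants, mutually commute on $T$ (their brackets land in $\mathcal{K}\otimes B$, which is zero on $T$). As in Lemma 4.10, the common-annihilator subspace $W=\{\,v\in T:\ g\otimes t^{\textbf{m}}\otimes b\cdot v=0\ \text{for all }g\in\ker(\Lambda|_{\mathfrak{\dot h}}),\ \textbf{m}\in\mathbb{Z}^{\nu+1},\ b\in B\,\}$ is a $(\mathfrak{b}\oplus\mathfrak{\dot h}\otimes A)\otimes B$-submodule of $T$: bracketing a degree-derivation operator $t^{\textbf{r}}d_a\otimes b'$ against $g\otimes t^{\textbf{m}}\otimes b$ returns $\pm m_a\,g\otimes t^{\textbf{r}+\textbf{m}}\otimes bb'$, again of the same type, while bracketing against $\mathfrak{\dot h}\otimes A\otimes B$ lands in $\mathcal{K}\otimes B$. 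It is nonzero: identifying each graded piece of $T$ with the finite-dimensional $T_{\underline{0}}$ via the invertible operators $h_0\otimes t^{\textbf{m}}\otimes 1$ (Theorem 5.8), the resulting structure matrices form a commuting family of nilpotent matrices and therefore have a common kernel vector. By irreducibility of $T$, $W=T$, i.e.\ $\ker(\Lambda|_{\mathfrak{\dot h}})\otimes A\otimes B$ annihilates $T$. (Alternatively, once a single $g\otimes t^{\textbf{m}_0}\otimes 1$ with some nonzero coordinate of $\textbf{m}_0$ is known to vanish on $T$, bracketing with $t^{\textbf{r}}d_a\otimes b$ and letting $\textbf{r}$ vary yields $g\otimes t^{\textbf{s}}\otimes b=0$ on $T$ for all $\textbf{s}$ and $b$ simultaneously.)

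Finally, solving $h_i=\frac{1}{\Lambda(h_0)}\big(\Lambda(h_i)h_0-g_i\big)$ gives, for every $\textbf{m}\in\mathbb{Z}^{\nu+1}$ and $b\in B$,
\begin{equation*}
h_i\otimes t^{\textbf{m}}\otimes b=\frac{\Lambda(h_i)}{\Lambda(h_0)}\,h_0\otimes t^{\textbf{m}}\otimes b=\frac{\Lambda(h_i)}{\Lambda(h_0)}\,\psi(b)\,h_0\otimes t^{\textbf{m}}\otimes 1=\psi(b)\,h_i\otimes t^{\textbf{m}}\otimes 1\quad\text{on }T,
\end{equation*}
using Lemma 5.7 for $h_0$ and the $b=1$ case of the same identity. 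I expect the main obstacle to be the middle portion: keeping the local-nilpotency estimates uniform across the $\mathbb{Z}^{\nu+1}$-grading and verifying that $W$ is genuinely a submodule — this is precisely why one must carry the tensor factor $B$ throughout rather than work with $\otimes 1$ alone; the surrounding manipulations are otherwise a routine rerun of Lemmas 4.6 and 4.10.
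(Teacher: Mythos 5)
Your proposal is correct and takes essentially the route the paper intends: the paper's own proof is only a pointer back to the Section~4 machinery, namely that the elements $\Lambda(h_{i})h_{0}-\Lambda(h_{0})h_{i}$ lie in $\ker(\Lambda|_{\mathfrak{\dot h}})$ and hence act by zero in degree zero, after which one reruns the local-nilpotency and annihilator-submodule arguments of Lemmas 4.6 and 4.10 and combines the outcome with the $h_{0}$ statement (Lemma 5.6) exactly as you do. The only quibbles are cosmetic: your references to the Section~5 results are off by one in numbering, and the degree-zero operators $g\otimes 1\otimes b$ are most cleanly disposed of by your parenthetical bracketing remark rather than by the commuting-nilpotent-matrices count.
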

\begin{proof}
Proof is similar to Theorem 4.16.
\end{proof}
We now show that the negative root vectors $x_{-\alpha}\otimes t^{\textbf m}\otimes b, \: \alpha \in \dot\Delta_{+}  $ acts as $\psi(b)x_{-\alpha}\otimes{t^{\textbf m}}\otimes 1$ on $T$. Note that
for any $v\in T,\: \alpha,\beta \in \dot{\Delta}_{+}$
\begin{align*}
\begin{split}
    &x_{\beta}\otimes t^{\textbf{n}}\otimes b'\cdot (x_{-\alpha}\otimes t^{\textbf{m}}\otimes b-\psi(b)x_{-\alpha}\otimes t^{\textbf{m}}\otimes 1)\cdot v \\
    &=[x_{\beta},x_{-\alpha}]\otimes t^{\textbf{\textbf{m}+\textbf{n}}}\otimes bb'\cdot v-\psi(b)[x_{\beta},x_{-\alpha}]\otimes b'\cdot v, \; (\because \: c_{0}=c_{1}=...=c_{\nu}=0 \;  \text{and} \; v\in T), \\
    &=0.
    \end{split} 
    \end{align*}

It follows that for every $v\in T$, the vector $ x_{-\alpha}\otimes t^{\textbf n}\otimes b_{2}\cdot v-\psi(b_{1})x_{-\alpha}\otimes t^{\textbf n}\otimes 1\cdot v  $ is annihilated by $\mathfrak{g}_{+}\otimes A\otimes B$. Therefore
\begin{equation*}
x_{-\alpha}\otimes t^{\textbf m}\otimes b = \psi(b)x_{-\alpha}\otimes t^{\textbf m}\otimes 1 \; \; \; \text{on $T$}
\end{equation*}
Otherwise, we obtain a highest weight vector(non-zero) of weight $\Lambda-\alpha$, which is an absurd. Thus, we have established that
\begin{theo} For $\alpha \in \dot\Delta_{+},\: \textbf m \in \mathbb{Z}^{\nu+1}$ and $b\in B$, we have \begin{equation*}
x_{-\alpha}\otimes t^{\textbf m}\otimes b\cdot v =\psi(b)x_{-\alpha}\otimes t^{\textbf m}\otimes{1}\cdot v \; \; \; \; \forall \: v\in T. \end{equation*}
\end{theo}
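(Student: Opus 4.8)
The plan is to establish the identity by induction on the height $\mathrm{ht}(\alpha)$ of $\alpha\in\dot\Delta_+$. The mechanism is that the space $T=\{v\in V\mid(\dot{\mathfrak g}_+\otimes A\otimes B)\cdot v=0\}$ is \emph{exactly} the subspace of $V$ annihilated by $\dot{\mathfrak g}_+\otimes A\otimes B$, while $\dot{\mathfrak h}\otimes 1$ acts on $T$ through the weight $\Lambda|_{\dot{\mathfrak h}}$; so to prove that two operators agree on $T$ it suffices to show that their difference, applied to a vector of $T$, is killed by all of $\dot{\mathfrak g}_+\otimes A\otimes B$ and sits in a weight space whose $\dot{\mathfrak h}$-part is different from $\Lambda|_{\dot{\mathfrak h}}$. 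Accordingly, fix $\alpha\in\dot\Delta_+$, $\mathbf m\in\mathbb Z^{\nu+1}$, $b\in B$, and — by linearity, taking $v$ a weight vector in some $T_{\mathbf n}$ — set
\[
w=x_{-\alpha}\otimes t^{\mathbf m}\otimes b\cdot v-\psi(b)\,x_{-\alpha}\otimes t^{\mathbf m}\otimes 1\cdot v\in V .
\]

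The first step is to show $(\dot{\mathfrak g}_+\otimes A\otimes B)\cdot w=0$. For $\beta\in\dot\Delta_+$, $\mathbf r\in\mathbb Z^{\nu+1}$, $b'\in B$, expand $x_\beta\otimes t^{\mathbf r}\otimes b'\cdot w$ using the bracket of $\tau(B)$: the two summands in which $x_\beta\otimes t^{\mathbf r}\otimes b'$ acts directly on $v$ vanish since $v\in T$, and the $\mathcal K\otimes B$-contributions vanish because the central operators $t^{\mathbf s}k_p\otimes b''$ act trivially on $T$, leaving
\[
x_\beta\otimes t^{\mathbf r}\otimes b'\cdot w=[x_\beta,x_{-\alpha}]\otimes t^{\mathbf m+\mathbf r}\otimes bb'\cdot v-\psi(b)\,[x_\beta,x_{-\alpha}]\otimes t^{\mathbf m+\mathbf r}\otimes b'\cdot v .
\]
Since $[x_\beta,x_{-\alpha}]\in\dot{\mathfrak g}_{\beta-\alpha}$, I would split on $\beta-\alpha$. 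If $\beta-\alpha$ is not a root and $\beta\ne\alpha$, the bracket is $0$. If $\beta=\alpha$, then $[x_\alpha,x_{-\alpha}]=\check\alpha\in\dot{\mathfrak h}$, and the relation $h\otimes t^{\mathbf s}\otimes b''=\psi(b'')\,h\otimes t^{\mathbf s}\otimes 1$ on $T$ (valid for all $h\in\dot{\mathfrak h}$ by linearity from the basis case already proved) together with $\psi(bb')=\psi(b)\psi(b')$ makes the right-hand side zero. If $\beta-\alpha\in\dot\Delta_+$, both terms vanish because $v\in T$. The only remaining possibility is $\beta-\alpha\in\dot\Delta_-$: writing $\beta-\alpha=-\delta$ with $\delta\in\dot\Delta_+$ one has $\mathrm{ht}(\delta)=\mathrm{ht}(\alpha)-\mathrm{ht}(\beta)<\mathrm{ht}(\alpha)$, so the induction hypothesis applies to $\delta$, and once more multiplicativity of $\psi$ gives $0$. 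Hence $(\dot{\mathfrak g}_+\otimes A\otimes B)\cdot w=0$. When $\mathrm{ht}(\alpha)=1$ the last case cannot arise (it would force $\mathrm{ht}(\alpha)\ge 2$), so this also settles the base case with no extra input.

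Now I conclude. If $w\ne 0$ then, by the previous step, $w\in T$; but $w$ is an $\mathfrak h$-weight vector whose weight restricted to $\dot{\mathfrak h}$ equals $\Lambda|_{\dot{\mathfrak h}}-\alpha$, whereas every weight of $T$ restricts to $\Lambda|_{\dot{\mathfrak h}}$ on $\dot{\mathfrak h}$. Since $\alpha\ne 0$ on $\dot{\mathfrak h}$, this is impossible, so $w=0$; equivalently, a nonzero $w$ would be a highest weight vector of weight $\Lambda-\alpha$, which is absurd. As $v$ ranges over a basis of weight vectors of $T$, the asserted identity on $T$ follows.

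I expect the only genuinely delicate point to be the case $\beta-\alpha\in\dot\Delta_-$ in the first step, which is precisely where the induction is invoked; this forces the induction to be organized on $\mathrm{ht}(\alpha)$ (rather than on $\mathbf m$ or on $b$), and it requires that $\psi$ has already been identified as an algebra homomorphism, so that the cross terms $\psi(bb')-\psi(b)\psi(b')$ cancel. Everything else is bracket bookkeeping in $\tau(B)$ together with the vanishing of $\mathcal K\otimes B$ on $T$ established earlier.
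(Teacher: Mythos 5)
Your proposal is correct and follows essentially the same route as the paper: show that the difference vector $w$ is annihilated by $\dot{\mathfrak g}_{+}\otimes A\otimes B$ and then rule out $w\neq 0$ because it would be a highest weight vector of weight $\Lambda-\alpha$. The only difference is that you make explicit, via the induction on $\mathrm{ht}(\alpha)$ and the multiplicativity of $\psi$, why the bracket terms $[x_{\beta},x_{-\alpha}]\otimes t^{\mathbf{m}+\mathbf{n}}\otimes bb'\cdot v-\psi(b)[x_{\beta},x_{-\alpha}]\otimes t^{\mathbf{m}+\mathbf{n}}\otimes b'\cdot v$ vanish in the cases $\beta=\alpha$ and $\beta-\alpha\in\dot\Delta_{-}$, a point the paper's computation asserts without elaboration.
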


By Lemma 4.6, it follows that $T$ is an irreducible $(A\rtimes DerA )\otimes{B}$-module and thus, Theorem 4.13, yields that $T$ is a single point evaluation module and, hence, an irreducible $(A\rtimes DerA)$-module. Consequently, by Theorem 4.18, $T$ is isomorphic to $F^{\beta}(\Psi,a)$, for some $(\beta,\Psi,a)$, as $(A\rtimes DerA)$-modules, where $F^{\beta}(\Psi,a)=V(\Psi,a)\otimes A$ is an irreducible $A\rtimes DerA$-module such that $V(\Psi,a)$-is an $m$-dimensional irreducible $gl_{\nu}(\mathbb{C})$-module, and 
\begin{equation}
\Psi(I)\notag=aId_{V(\Psi,a)},
\end{equation}
\begin{equation*}
    t^{\textbf r}d_{p}(v\otimes t^{\textbf m})=(m_{p}+\beta_{p})v\otimes t^{\textbf{r}+\textbf{m}}+ \sum_{i=0}^{\nu}r_{i}\Psi(E_{ip})v\otimes t^{\textbf{r}+\textbf{m}}.
\end{equation*}
Let $F^{\beta}(\Psi,a)$ be the $ A\rtimes DerA$-module as defined above, then extending this module to the $(A\rtimes DerA)\otimes B$-module as follows:
\begin{equation*} t^{\textbf{m}}d_{p}\otimes b\cdot v= \psi(b)t^{\textbf{m}}d_{p}\otimes{1}\cdot v,
\end{equation*}
and
\begin{equation*}
h_{0}\otimes t^{\textbf{m}}\otimes b\cdot v = \psi(b)h_{0}\otimes t^{\textbf{m}}\otimes 1\cdot v.
\end{equation*}
Let $\Lambda_{0}\in \mathfrak{\dot h}^{*}$ be such that $\Lambda_{0}\neq 0$, $L(\Lambda_{0},\pi)$ is an irreducible highest weight module of $\mathfrak{\dot{g}}$ with the highest weight $\Lambda_{0}$ and the associated highest weight vector $v_{\Lambda_{0}}$, $F^{\beta}(\Psi,a)$ be as defined above. Let
\begin{equation*} F^{\beta}(\Psi,a,\Lambda_{0}) = V_{\Lambda_{0}}\otimes F^{\beta}(\Psi,a).
\end{equation*}
We define the action of $\tau(B)$ on $F^{\beta}(\Psi,a, \Lambda_{0})$ by
\begin{align*}
\begin{split}
    x\otimes t^{\textbf{r}}\otimes b\cdot(w\otimes v(\textbf{m})) &= \psi(b)(\pi(x)w)\otimes v(\textbf{m}+\textbf{r}), \\
t^{r}k_{p}\otimes b\cdot(w\otimes v(\textbf{m})) &= 0, \\
t^{\textbf{r}}d_{p}\otimes{b}\cdot ( w\otimes v(\textbf{m} )) &= \psi(b)w\otimes (t^{\textbf r}d_{p}\otimes 1)\cdot v(\textbf{m}),
\end{split}
\end{align*}
where $x\in \mathfrak{\dot g},\: w\in L(\Lambda_{0}),\: v(\textbf{m})=v\otimes t^{\textbf{m}}\in V(\Psi,a)\otimes A,\: b\in B,\: \textbf{r},\textbf{m}\in \mathbb{Z}^{\nu+1},\: 0\leq p\leq \nu$. Then $F^{\beta}(\Psi,a,\Lambda)$ is an irreducible $\tau(B)$-module, which is an evaluation module at a single point. Hence the following result holds as a consequence of S. Eswara Rao and Cuipo Jiang [4].
\begin{theo}
    Let $F^{\beta}(\Psi,a,\Lambda_{0})$ be an irreducible $\tau(B)$-module defined above.
    \begin{itemize}
        \item [(1)] $F^{\beta}(\Psi,a,\Lambda_{0})$ is integrable if and only if $\Lambda_{0}$ is a dominant integral weight of $\mathfrak{\dot g}$.
        \item[(2)] Let $V\in C_{fin}$ be such that $c_{0}=c_{1}=\cdots=c_{\nu}=0$ and $(\mathfrak{\dot g}\otimes A\otimes B)V\neq 0$. then $V\cong F^{\beta}(\Psi,a, \Lambda_{0}),$ for some $(\beta,\Psi,a, \Lambda_{0})$.
        
    \end{itemize}
\end{theo}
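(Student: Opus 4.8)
The plan is to use that $F^{\beta}(\Psi,a,\Lambda_{0})$ is an evaluation module at a single point: by its defining formulas the $\tau(B)$-action factors through $\tau(B)=\tau\otimes B\twoheadrightarrow\tau\otimes(B/\ker\psi)\cong\tau$, so both assertions reduce to statements over the underlying full toroidal algebra $\tau$. For part (1), I would read off the integrability of $F^{\beta}(\Psi,a,\Lambda_{0})=L(\Lambda_{0})\otimes F^{\beta}(\Psi,a)$ from $L(\Lambda_{0})$ alone. Computing directly, for $\alpha\in\dot\Delta$, $\mathbf r\in\mathbb Z^{\nu+1}$, $b\in B$ and $w\otimes v(\mathbf m)$ one gets, by induction on $N$,
\[
(x_{\alpha}\otimes t^{\mathbf r}\otimes b)^{N}(w\otimes v(\mathbf m))=\psi(b)^{N}\bigl(\pi(x_{\alpha})^{N}w\bigr)\otimes v(\mathbf m+N\mathbf r).
\]
Taking $b=1$ and $\mathbf r=\mathbf 0$, local nilpotence of $x_{\alpha}\otimes t^{\mathbf 0}\otimes 1$ is equivalent to local nilpotence of $\pi(x_{\alpha})$ on $L(\Lambda_{0})$; imposing this for every $\alpha\in\dot\Delta$ (in particular for the negative simple root vectors) is, by the usual $sl_{2}$-analysis of an irreducible highest weight $\mathfrak{\dot g}$-module, equivalent to $\Lambda_{0}$ being a dominant integral weight of $\mathfrak{\dot g}$. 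Conversely, if $\Lambda_{0}$ is dominant integral then $L(\Lambda_{0})$ is finite dimensional, each $\pi(x_{\alpha})$ is nilpotent, the displayed identity makes $x_{\alpha}\otimes t^{\mathbf r}\otimes b$ locally nilpotent for all $\alpha,\mathbf r,b$, and $F^{\beta}(\Psi,a,\Lambda_{0})$ is visibly an $\mathfrak h$-weight module, hence integrable. This is precisely the criterion of [4], pulled back along $\psi$.

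For part (2), let $V\in\mathcal{C}_{fin}$ with $c_{0}=\cdots=c_{\nu}=0$ and $(\mathfrak{\dot g}\otimes A\otimes B)\cdot V\neq 0$, and keep the notation of the section: $\Lambda|_{\mathfrak{\dot h}}=\Lambda_{0}\neq 0$, $T=\{v\in V\mid(\mathfrak{\dot g}_{+}\otimes A\otimes B)\cdot v=0\}\neq 0$, and $\psi\colon B\to\mathbb C$ the algebra homomorphism with $\psi(1)=1$ already produced. The central task is to show that $V$ is a single-point evaluation module at $\psi$. First I would establish $(\tau\otimes\ker\psi)\cdot T=0$ by collecting the statements already proved on $T$: $\mathfrak{\dot g}_{+}\otimes A\otimes B$ kills $T$ by definition; $\mathcal{K}\otimes B$ kills $T$ (the center acts trivially on $T$); $h\otimes t^{\mathbf m}\otimes b=\psi(b)\,h\otimes t^{\mathbf m}\otimes 1$ for $h\in\mathfrak{\dot h}$ and $t^{\mathbf m}d_{p}\otimes b=\psi(b)\,t^{\mathbf m}d_{p}\otimes 1$ on $T$ (since $T$ is a single-point evaluation $(A\rtimes DerA)\otimes B$-module, via Theorems 4.13 and 4.18); and $x_{-\alpha}\otimes t^{\mathbf m}\otimes b=\psi(b)\,x_{-\alpha}\otimes t^{\mathbf m}\otimes 1$ on $T$ for $\alpha\in\dot\Delta_{+}$. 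For $b\in\ker\psi$ all the scalars $\psi(b)$ vanish, so $\mathfrak{\dot g}\otimes A\otimes\ker\psi$, $\mathcal{K}\otimes\ker\psi$ and $\mathcal{D}\otimes\ker\psi$ each annihilate $T$, which gives the claim.

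Next I would propagate this to $V$. With $\mathfrak a=\ker\psi$ an ideal of codimension $1$, so that $\tau\otimes\mathfrak a$ is an ideal of $\tau(B)$, the set $W=\{v\in V\mid(\tau\otimes\mathfrak a)\cdot v=0\}$ is a $\tau(B)$-submodule: for $v\in W$, $X\in\tau(B)$, $Y\in\tau\otimes\mathfrak a$ one has $Y\cdot(X\cdot v)=[Y,X]\cdot v$ with $[Y,X]\in[\tau\otimes\mathfrak a,\tau\otimes B]\subseteq\tau\otimes\mathfrak a$, whence $[Y,X]\cdot v=0$. Since $0\neq T\subseteq W$ and $V$ is irreducible, $W=V$, so $(\tau\otimes\mathfrak a)\cdot V=0$ and the action factors through $\tau\otimes(B/\mathfrak a)\cong\tau$. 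As a $\tau$-module, $V$ is irreducible, integrable with finite dimensional weight spaces (put $b=1$ in the conditions defining $\mathcal{C}_{fin}$), has zero central charges, and satisfies $(\mathfrak{\dot g}\otimes A)\cdot V\neq 0$ (from $(x\otimes f\otimes b)\cdot v=\psi(b)(x\otimes f\otimes 1)\cdot v$, a nonzero left side forces the right side nonzero). By the classification of such $\tau$-modules of S. Eswara Rao and C. Jiang [4], $V\cong F^{\beta}(\Psi,a,\Lambda_{0})$ as $\tau$-modules, where $(\beta,\Psi,a)$ are the parameters of the $(A\rtimes DerA)$-module $T$ supplied by Theorems 4.13 and 4.18 and $\Lambda_{0}=\Lambda|_{\mathfrak{\dot h}}$; pulling the action back along $\psi$ yields $V\cong F^{\beta}(\Psi,a,\Lambda_{0})$ as $\tau(B)$-modules.

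The computations in (1) and the submodule argument in (2) are routine. I expect the two real difficulties to be, first, assembling the several structural statements of the section into the single equality $(\tau\otimes\ker\psi)\cdot T=0$ without overlooking any summand of $\tau$ (each ingredient is already available, but the verification has to be complete); and, second, checking that the module $F^{\beta}(\Psi,a,\Lambda_{0})$ constructed here coincides — up to the normalizations of $\Psi$, $\beta$ and the highest weight $\Lambda_{0}$ — with the module appearing in the classification of [4], so that the citation applies verbatim. Neither requires a new delicate estimate, but the second, being an identification of conventions, is the main obstacle to writing down a fully rigorous argument.
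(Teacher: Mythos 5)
Your proposal is correct and follows essentially the same route as the paper: the preceding lemmas of Section 5 (trivial action of $\mathcal{K}\otimes B$ on $T$, and the $\psi(b)$-twisted action of $\dot{\mathfrak h}\otimes A\otimes B$, $\mathcal{D}\otimes B$ and the negative root vectors on $T$) are assembled to exhibit $V$ as a single-point evaluation module, after which the classification of S.~Eswara Rao and C.~Jiang [4] for the underlying full toroidal algebra $\tau$ is invoked. Your explicit propagation step — showing $(\tau\otimes\ker\psi)\cdot T=0$ and then that the annihilator of the ideal $\tau\otimes\ker\psi$ is a nonzero $\tau(B)$-submodule, hence all of $V$ — is exactly the detail the paper leaves implicit when it asserts the result "holds as a consequence of [4]."
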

\section*{Declarations}
\subsection*{Ethical Approval} Not applicable.
\subsection*{Funding} The authors acknowledge partial funding from Research Apex Project grant to Harish-Chandra Research Institute, Prayagraj.
\subsection*{ Availability of Data and Materials} No data was used for the research described in the article.

\end{document}